%% file: 9.19.tex
\documentclass[11pt,a4paper]{article}

\usepackage[a4paper,top=25mm,bottom=27mm,left=28mm,right=28mm,headheight=14pt]{geometry}
\usepackage[T1]{fontenc}
\usepackage[utf8]{inputenc}
\usepackage{amsmath,amssymb,amsthm,mathtools,bm}
\usepackage{mathpazo,mathrsfs}
\usepackage{microtype}
\usepackage{booktabs,array,longtable,tabularx,adjustbox,graphicx,float}
\usepackage{enumitem}
\usepackage{xcolor}
\usepackage{xurl}
\usepackage{hyperref}
\usepackage[noabbrev,capitalise]{cleveref}
\usepackage{fancyhdr}
\usepackage{etoolbox}
\usepackage{aliascnt} 

\usepackage{pgfplots}
\pgfplotsset{compat=1.18}
\usepgfplotslibrary{groupplots}

\usepackage{tikz}
\usetikzlibrary{arrows.meta,positioning}

\hypersetup{
  colorlinks=true,
  linkcolor=black,
  citecolor=black,
  urlcolor=black,
  linktoc=all,
  pdftitle={Uniqueness and Nondegeneracy of Ground States for the L2-Critical Boson Star Equation},
  pdfauthor={Pan Chen, Qi Guo, Juncheng Wei, Yuanyang Yu}
}

\fancypagestyle{plain}{%
  \fancyhf{}
  \fancyfoot[C]{\normalcolor\small\thepage}
  }

\numberwithin{equation}{section}
\newtheorem{theorem}{Theorem}[section]
\newaliascnt{proposition}{theorem}
\newtheorem{proposition}[proposition]{Proposition}
\aliascntresetthe{proposition}
\newaliascnt{lemma}{theorem}
\newtheorem{lemma}[lemma]{Lemma}
\aliascntresetthe{lemma}
\newaliascnt{corollary}{theorem}
\newtheorem{corollary}[corollary]{Corollary}
\aliascntresetthe{corollary}
\newtheorem*{liebyauconjecture}{Lieb-Yau conjecture}
\theoremstyle{definition}
\newaliascnt{definition}{theorem}

\aliascntresetthe{definition}

\newtheoremstyle{italicremark}
  {\topsep}
  {\topsep}
  {\itshape}
  {}
  {\itshape}
  {.}
  {0.5em}
  {}
\theoremstyle{italicremark}

\crefname{theorem}{Theorem}{Theorems}
\crefname{proposition}{Proposition}{Propositions}
\crefname{lemma}{Lemma}{Lemmas}
\crefname{corollary}{Corollary}{Corollaries}
\crefname{definition}{Definition}{Definitions}
 \crefname{remark}{Remark}{Remarks}
\crefname{equation}{equation}{equations}
\crefname{section}{Section}{Sections}
\crefname{subsection}{Section}{Sections}
\crefname{subsubsection}{Section}{Sections}
\crefname{appendix}{Appendix}{Appendices}

\newcommand{\Id}{\mathrm{Id}}

\newcommand{\dd}{\,\mathrm d}

\allowdisplaybreaks[1]
\newif\ifshowrevisions
\showrevisionstrue

\newif\ifshowreviewchanges
\showreviewchangestrue

\title{\bfseries Uniqueness and Nondegeneracy of Ground State for the
$L^2$-Critical Boson Star Equation}
\author{Pan Chen, Qi Guo, Juncheng Wei, Yuanyang Yu}
\date{}

\begin{document}
\maketitle
\vspace{-1.25em}

\begin{abstract}
We prove uniqueness, up to phase rotations and translations, of the ground
state for the $L^2$-critical boson star equation
\[
  (\sqrt{-\Delta}+1)Q
  =\bigl(|x|^{-1}* |Q|^2\bigr)Q,
  \qquad x\in\mathbb R^3.
\]
We also determine the kernels of the real and imaginary linearized
operators.  As a consequence, we show the Lieb-Yau uniqueness conjecture holds when the mass is sufficiently
close to the Chandrasekhar mass.
\end{abstract}

\noindent\textbf{Keywords.} Boson star equation; ground states; uniqueness; nondegeneracy; pseudorelativistic Hartree equation.

\smallskip

\medskip
\tableofcontents

\clearpage
\section{Introduction and main results}\label{1.0}

In this paper, we study ground states of the $L^2$-critical boson star equation
\begin{equation}\label{1.1}
  (\sqrt{-\Delta}+1)Q
  =\bigl(|x|^{-1}*|Q|^2\bigr)Q,
  \qquad Q: \mathbb R^3\rightarrow \mathbb{C}.
\end{equation}
The main results are the uniqueness of ground states up to phase rotations and translations, and the nondegeneracy of the linearized operators.

Equation~\eqref{1.1} is the stationary massless boson star equation
arising in the mean-field description of self-gravitating relativistic
bosons.  More precisely, if $m\geq0$ denotes the rest mass, the boson star
evolution equation is
\[
  i\partial_t\psi
  =\bigl(\sqrt{-\Delta+m^2}-m\bigr)\psi
   -\bigl(|x|^{-1}*|\psi|^2\bigr)\psi.
\]
When $m=0$, the solitary wave $\psi(t,x)=e^{it}Q(x)$ is equivalent to
\eqref{1.1}.  The evolution equation is a mean-field model for
relativistic bosons interacting through their self-gravity
\cite{LiebYau1987,ElgartSchlein2007}.  Its solitary waves, well-posedness, and
collapse dynamics have been studied extensively; see, for example,
\cite{FrohlichJonssonLenzmann2007,FrohlichLenzmann2007,Lenzmann2007,
HerrLenzmann2014,LenzmannLewin2011}.

Our first result gives a classification of all ground states of
\eqref{1.1}.

\begin{theorem} \label{1.2}
The positive radial ground state of \eqref{1.1} is unique; we denote
it by $Q$.  Moreover, every ground state of \eqref{1.1} is of the
form
\[
  e^{i\vartheta}Q(\,\cdot-x_0),
  \qquad
  \vartheta\in\mathbb R,\quad x_0\in\mathbb R^3.
\]
\end{theorem}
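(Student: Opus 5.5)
We will prove uniqueness of the positive radial ground state by a continuation argument in a parameter that links \eqref{1.1} to an equation where uniqueness and nondegeneracy are already known. This is in the spirit of Frank--Lenzmann's uniqueness proof for fractional NLS. The problem has a massless kinetic part $\sqrt{-\Delta}$ and the extra term $+1$. The natural family is
\[
  (\sqrt{-\Delta+m^2}-m+1)Q_m=(|x|^{-1}*|Q_m|^2)Q_m ,
\]
or, after rescaling, a family interpolating between the nonrelativistic limit and $m=0$. As $m\to\infty$, a suitable rescaling converges to the Choquard equation $(-\tfrac12\Delta+1)Q=(|x|^{-1}*Q^2)Q$. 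For that equation Lieb proved uniqueness and Lenzmann proved nondegeneracy: $\ker L_+=\mathrm{span}\{\partial_jQ\}$.

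\textbf{Step 1: existence and properties of ground states.} We first show that every ground state is, up to a phase, a positive function. The argument is $|\widehat{|u|}|$-type rearrangement: $\langle |u|,\sqrt{-\Delta}|u|\rangle\le\langle u,\sqrt{-\Delta}u\rangle$, with equality only if $u=e^{i\vartheta}|u|$. We then show it is radially symmetric decreasing about some $x_0$. For this we use strict rearrangement inequalities: Riesz for the Hartree term, and the strict form for $\sqrt{-\Delta}$, which holds because its heat kernel $e^{-t\sqrt{-\Delta}}$ (Poisson kernel) is strictly symmetric decreasing. This reduces \cref{1.2} to uniqueness of the positive radial solution among minimizers.

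\textbf{Step 2: nondegeneracy along the branch.} For every positive radial ground state $Q_m$ we prove that the radial part of $L_{+,m}=\sqrt{-\Delta+m^2}-m+1-(|x|^{-1}*Q_m^2)-2Q_m(|x|^{-1}*(Q_m\,\cdot))$ has trivial kernel. We handle nonradial sectors by decomposing into spherical harmonics. For $\ell\ge1$, a Perron--Frobenius argument applies: the operator restricted to each sector has positivity-improving resolvent, since $\sqrt{-\Delta+m^2}$ is a Bessel-type operator whose resolvent kernel is positive on each $\ell$-sector. This identifies the $\ell=1$ kernel as the translations $\partial_jQ_m$ and shows there is none for $\ell\ge2$. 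The radial sector is the hard part. We would mimic Lenzmann's Newton-potential argument: write $\phi=|x|^{-1}*(Q\psi)$, so that $L_+\psi=0$ becomes a coupled system. Using the identity $L_+(x\cdot\nabla Q+ cQ)=$ (an explicit multiple of $Q$ plus a term coming from the $m$-dependence), together with the Hilbert-transform/harmonic-extension representation of $\sqrt{-\Delta}$, we aim to show that a radial kernel element would force an oscillation count contradicting the fact that $L_+$ has exactly one negative eigenvalue. That $L_+$ has exactly one negative eigenvalue follows from the minimizing property.

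\textbf{Step 3: continuation.} Nondegeneracy lets us apply the implicit function theorem in $H^{1/2}_{\mathrm{rad}}$. It produces a locally unique $C^1$ branch $m\mapsto Q_m$ through each ground state. Uniform a priori bounds on ground states are needed next: uniform $H^{1/2}$ bounds and decay, via energy comparisons with the Chandrasekhar-type sharp constant and the $L^2$-criticality. Together they give compactness, so the branch extends globally, both to $m\to\infty$ and down to $m=0$. At $m\to\infty$, any two branches must converge to Lieb's unique Choquard ground state, where nondegeneracy forbids two branches from merging. Hence there is only one branch, and at $m=0$ the positive radial ground state is unique.

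\textbf{Main obstacle.} The main obstacle is Step 2 in the radial sector, uniformly along the branch. Nodal/ODE arguments are unavailable for the nonlocal operator, and $L^2$-criticality produces the extra element $\Lambda Q=\tfrac32Q+x\cdot\nabla Q$ in the generalized kernel. We must show $\Lambda Q$ is not a genuine kernel element; the $+1$ mass term should break scaling enough for this. We would first try the harmonic-extension approach combined with the Newtonian potential reduction to a local system in $\mathbb R^4_+$, and count sign changes there.
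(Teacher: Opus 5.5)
Your Step 1 is fine; it is essentially how the paper finishes, using the equality cases of the diamagnetic and Riesz inequalities. The $\ell\ge1$ part of Step 2 also works, provided you add the comparison $L_{+,\ell}\succ L_{+,1}$ for $\ell\ge2$ to the Perron--Frobenius argument.

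\textbf{The gap.} The radial sector of Step 2 carries the whole continuation, and you give no argument for it. Note that $\Lambda Q$ is not the difficulty: $L_+(\tfrac32Q+x\cdot\nabla Q)=-Q\neq0$, so it is never a zero mode. What must be excluded is every other radial zero mode, at every point of the branch.

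The tool you propose (harmonic extension to $\mathbb R^4_+$, a Newton-potential reduction, and a sign-change count) is precisely the one that fails here. Ordering two boundary traces up to their first crossing gives no control of the nodal domains of their extensions. The paper identifies this as the obstruction behind the gap in Frank and Lenzmann's earlier treatment of this very equation. In addition, the nonlocal exchange term $2Q(|x|^{-1}*(Q\,\cdot))$ prevents a direct use of the Frank--Lenzmann--Silvestre oscillation argument.

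The rescaling $Q_m(x)=m^{3/2}v(mx)$ turns your family into $(\sqrt{-\Delta+1}-1+m^{-1})v=(|x|^{-1}*v^2)v$. So your branch sweeps every frequency of the rest-mass-one problem, including the intermediate range where, as the paper stresses, even uniqueness remains open. Lenzmann's nonrelativistic-limit/implicit-function argument gives nondegeneracy only near the end $m\to\infty$. Your plan therefore needs a global radial nondegeneracy result that nobody currently has.

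There is a further unaddressed point. The implicit function theorem produces branches of solutions, not of ground states. So ``exactly one negative eigenvalue by minimality'' is unavailable along the branch unless you separately show that the branch stays in the ground-state set, or prove nondegeneracy for all positive radial solutions of Morse index one.

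\textbf{How the paper avoids this.} It uses no continuation and proves uniqueness directly at $m=0$:
\begin{itemize}
\item It reduces to the half-line and the scale-invariant quotient $\mathcal B$, and fixes an explicit Benjamin--Ono-type centre $f_0$.
\item Jacobi band-matrix estimates give coercivity of $H_Z$ and a small defect on $Y_2$.
\item Each nonnegative maximizer is normalized as $g=f_0+w$ with $w\in Y_2$ by maximizing its overlap with $f_0$ over dilations. The inequalities at competing scales yield the global mixed-potential bound \eqref{5.4} without any smallness of $w$.
\item The exact identities \eqref{6.3}--\eqref{6.4} localize every maximizer in $A[w]^{1/2}<1/20$, and uniform strict concavity there gives uniqueness.
\end{itemize}
Nondegeneracy is then deduced from uniqueness through the Hessian bound, the reverse of the order your plan requires.
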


For the linearization at $Q$, set
\[
  V:=|x|^{-1}*Q^2.
\]
The real and imaginary components of the linearization are governed,
respectively, by
\begin{align*}
  L_+h
  &:=\bigl(\sqrt{-\Delta}+1-V\bigr)h
     -2Q\bigl(|x|^{-1}*(Qh)\bigr),\\
  L_-h
  &:=\bigl(\sqrt{-\Delta}+1-V\bigr)h.
\end{align*}

\begin{theorem} 
\label{1.3}
Let $Q$ be the positive radial ground state of \eqref{1.1}. Then
\[
  \ker L_+
  =\operatorname{span}_{\mathbb R}
    \{\partial_{x_1}Q,\partial_{x_2}Q,\partial_{x_3}Q\},
  \qquad
  \ker L_- = \operatorname{span}_{\mathbb R}\{Q\}.
\]
\end{theorem}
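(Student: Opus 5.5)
We split the argument into the easy operator $L_-$ and the harder operator $L_+$.

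\emph{The operator $L_-$.} By \eqref{1.1} we have $L_-Q=0$. Since $Q>0$, the standard Perron--Frobenius argument for $\sqrt{-\Delta}+1-V$ applies: the kernel of $e^{-t\sqrt{-\Delta}}$ is positive, so the semigroup is positivity improving. Hence the lowest eigenvalue of $L_-$ is simple and has a positive eigenfunction. Because $Q$ is a positive eigenfunction, $0$ must be that lowest eigenvalue, and therefore $\ker L_-=\operatorname{span}\{Q\}$.

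\emph{The operator $L_+$.} Differentiating \eqref{1.1} in $x_j$ shows $\partial_{x_j}Q\in\ker L_+$. For the converse we decompose $L^2(\mathbb R^3)$ into spherical harmonic sectors $\ell=0,1,2,\dots$. Both $\sqrt{-\Delta}$ and multiplication by the radial function $V$ commute with rotations. The nonlocal term $h\mapsto 2Q\,(|x|^{-1}*(Qh))$ also preserves each sector, since convolution with $|x|^{-1}$ acts on the $\ell$-th component through the Newton multipole kernel $\frac{r_<^\ell}{r_>^{\ell+1}}\frac{4\pi}{2\ell+1}$.

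\emph{Sectors $\ell\ge1$.} Write $L_{+,\ell}$ for the restriction of $L_+$ to the $\ell$-th sector. The radial profile of $\partial_{x_j}Q$ is $Q'(r)$, which is negative, as I would take from the radial decrease of the ground state established earlier. The plan is to show that $L_{+,1}$ still has a Perron--Frobenius structure. Its local part, $\sqrt{-\Delta}$ restricted to $\ell=1$, is positivity improving in the radial variable after the sign change $h\mapsto -h$. Its nonlocal part, $-2Q\,K_1(Qh)$, has a nonnegative kernel with a minus sign in front, so it only lowers the operator while preserving positivity. Consequently the ground state of $L_{+,1}$ is simple, and $-Q'>0$ lies in its kernel, so $0$ is the bottom of $L_{+,1}$ with one-dimensional kernel. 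For $\ell\ge2$ we have $L_{+,\ell}>L_{+,1}$ strictly. The local centrifugal part increases with $\ell$ (via the monotonicity of the Bessel-type kernels of $\sqrt{-\Delta}$ in $\ell$), and the multipole kernel $\frac{4\pi}{2\ell+1}\frac{r_<^\ell}{r_>^{\ell+1}}$ decreases in $\ell$ pointwise. Hence $\ker L_{+,\ell}=\{0\}$ for $\ell\ge2$.

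\emph{Radial sector $\ell=0$; the main obstacle.} This is where uniqueness enters and where I expect the main difficulty. I would proceed as follows.
\begin{enumerate}[label=(\roman*)]
\item Use the scaling structure of the problem: the functions $Q$ and $\Lambda Q:=\tfrac32Q+x\cdot\nabla Q$ satisfy $L_+Q=-2(|x|^{-1}*Q^2)Q$ and $L_+\Lambda Q=-\sqrt{-\Delta}\,Q$-type identities. This gives the explicit generalized-kernel relations that exclude $0$ being degenerate in certain directions.
\item Show that $L_{+,0}$ has exactly one negative eigenvalue. $Q$ minimizes the Weinstein-type functional, so the second variation is nonnegative on a codimension-one subspace, which gives at most one negative eigenvalue. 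Testing $\langle L_+Q,Q\rangle<0$ gives at least one.
\item Exclude a radial kernel element $h$. If $L_+h=0$, then orthogonality relations together with the identities above would give a contradiction, unless one can prove a nondegeneracy/oscillation fact.
\end{enumerate}
The most robust route I would try first for step (iii) is to import the method used for Theorem~\ref{1.2}, presumably a continuation or perturbation argument from a model with known nondegeneracy, such as the Newtonian Choquard limit or a Frank--Lenzmann type homotopy in the nonlocal parameter. Along that branch, nondegeneracy is preserved by the implicit function theorem, and global uniqueness forces the branch to reach $Q$. Radial nondegeneracy is then exactly the condition that allowed the branch to be continued, and I expect the paper derives it within that argument. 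If instead a direct proof is needed, I would use Frank--Lenzmann's nodal-domain bound for radial eigenfunctions of $\sqrt{-\Delta}$-type operators: the second radial eigenfunction changes sign once. Combined with the Newton-type identity $|x|^{-1}*(Qh)$ being evaluated through the sign change point, this contradicts $L_+h=0$.
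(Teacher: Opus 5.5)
Your treatment of $L_-$ and of the sectors $\ell\ge1$ is essentially the paper's. The radial sector $\ell=0$, which is the heart of the theorem, is not proved.

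Steps (i)--(ii) give only what minimality gives: $\langle h,L_+h\rangle\ge0$ on $\{h:\langle(\sqrt{-\Delta}+1)Q,h\rangle=0\}$, hence Morse index one. Non-strict nonnegativity cannot exclude a zero mode. The identities $L_+Q=-2(\sqrt{-\Delta}+1)Q$ and $L_+(\tfrac32Q+x\cdot\nabla Q)=-Q$ (the correct dilation identity, not a $\sqrt{-\Delta}Q$-type one) only show that a radial kernel element is orthogonal to $(\sqrt{-\Delta}+1)Q$ and to $Q$. No contradiction follows, so step (iii) is the whole problem, and both of your fallbacks fail:
\begin{itemize}
\item The paper's proof of \cref{1.2} is not a continuation argument, and continuation could not deliver nondegeneracy anyway. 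The implicit function theorem consumes nondegeneracy along a branch rather than producing it; this is why, as the introduction notes, such arguments give only local uniqueness.
\item The nodal route is exactly the one the introduction identifies as obstructed: ordering boundary traces does not control the nodal domains of their harmonic extensions. Moreover, nodal bounds for $\sqrt{-\Delta}+W$ do not apply to $L_+$ at all, because of the nonlocal exchange term $-2Q\bigl(|x|^{-1}*(Q\,\cdot)\bigr)$.
\end{itemize}

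The missing idea is to get \emph{strict} negativity of the constrained second variation from the quantitative form of the uniqueness proof. The paper argues as follows.
\begin{itemize}
\item By \cref{6.5}, the representative $g=f_0+w$ of $f_*$ satisfies $A[w]^{1/2}<1/20$. \Cref{6.8} then gives $\mathrm D^2\widetilde\Psi(w)[v,v]\le-\frac74A[v]$ on $Y_2$.
\item The symmetry directions $g$ and $\Lambda g$ are transverse to $Y_2$. Hence $\mathrm D^2\mathcal B(f_*)\preceq0$ with null space exactly $\operatorname{span}\{f_*,\Lambda f_*\}$ (\cref{7.4}).
\item Passing to the fixed-frequency quotient via $\mathfrak b/A^2=\mathcal B\,[1-((T-M)/A)^2]$ adds the term $-8\beta_*\langle(D-\Id)f_*,k\rangle^2$. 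This removes the dilation direction, since $\langle Df_*,\Lambda f_*\rangle=\frac14$. The constraint $A[f_*,k]=0$ removes the amplitude direction.
\item This yields $\langle h,L_+h\rangle>0$ for every nonzero radial $h$ orthogonal to $(\sqrt{-\Delta}+1)Q$ (\cref{7.6}). A radial zero mode has this orthogonality automatically, by self-adjointness and $L_+Q=-2(\sqrt{-\Delta}+1)Q$, so it vanishes.
\end{itemize}

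Separately, for $\ell\ge2$ you justify $L_{+,\ell}>L_{+,1}$ by pointwise decrease of the multipole kernels. That alone does not give a form inequality on sign-changing functions. The paper applies the comparison only to a strictly positive zero mode. Such a mode exists because $L_{+,\ell}\succeq0$ by your own step (ii), since every $\ell\ge1$ sector is orthogonal to the radial function $(\sqrt{-\Delta}+1)Q$; a zero mode is therefore the Perron--Frobenius ground state.
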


Thus the only zero modes are those generated by translations and phase
rotations. 
The significance of the critical equation extends beyond the massless model.
It also governs the rescaled concentration profile of the Lieb-Yau uniqueness problem as the particle number
approaches its critical value.  Define the Coulomb-Hartree interaction by
\[
  \mathcal H(u)
  :=\iint_{\mathbb R^3\times\mathbb R^3}
  \frac{|u(x)|^2|u(y)|^2}{|x-y|}\,\mathrm dx\,\mathrm dy.
\]
For a rest mass $m>0$ and a prescribed particle number $N>0$, set
\[
  \begin{aligned}
    \mathcal E_m(u)
    &:=\left\langle
       u,\bigl(\sqrt{-\Delta+m^2}-m\bigr)u
       \right\rangle
       -\frac12\mathcal H(u),\\
    e_m(N)
    &:=\inf_{\|u\|_2^2=N}\mathcal E_m(u).
  \end{aligned}
\]
The Chandrasekhar mass is determined by the sharp inequality
\[
  N_*:=2\inf_{u\ne0}
  \frac{\|u\|_2^2\|(-\Delta)^{1/4}u\|_2^2}{\mathcal H(u)}.
\]
Lieb and Yau proved that, for every $m>0$ and $0<N<N_*$, the problem
$e_m(N)$ admits a positive minimizer and formulated the following conjecture
\cite{LiebYau1987}.

\begin{liebyauconjecture}
For every $m>0$ and every $0<N<N_*$, the variational problem $e_m(N)$ has a
unique positive minimizer up to translations.
\end{liebyauconjecture}

For sufficiently small particle number $N$, Lenzmann proved uniqueness except
possibly for an at most countable set of values of $N$
\cite{Lenzmann2009}.  Guo and Zeng later removed this exceptional set
\cite{GuoZeng2020}.  The remaining difficulty is the intermediate-$N$
regime, and in particular the behavior near the endpoint $N_*$.  As
$N\to N_*$, the concentration theorem of Guo and Zeng places every
positive minimizer, after centering and rescaling, near an optimizer of the
critical problem \cite{GuoZeng2017}.  \Cref{1.2} identifies
this limiting profile, while \cref{1.3} supplies the radial
invertibility needed to analyze the local branch.  This yields the following
near-critical case of the conjecture.

\begin{corollary}
\label{1.4}
There exists $\delta_*\in (0,N_*)$ such that, for every $m>0$ and every $N$ satisfying
\[
  N_* - \delta_* < N < N_*,
\]
the variational problem $e_m(N)$ has a unique positive minimizer up to
translations.
\end{corollary}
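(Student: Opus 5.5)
We argue by contradiction, combining the blow-up analysis of Guo and Zeng \cite{GuoZeng2017} with the nondegeneracy of \cref{1.3} through an implicit-function/local-uniqueness argument. The uniformity in $m>0$ comes from scaling. If $u$ is a minimizer for $e_m(N)$, then $u_\lambda(x)=\lambda^{3/2}u(\lambda x)$ is a minimizer for $e_{m/\lambda}(N)$, and $\mathcal E_{m/\lambda}(u_\lambda)=\lambda\,\mathcal E_m(u)$. Hence we may fix $m=1$, and $\delta_*$ is automatically independent of $m$.

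\emph{Step 1 (blow-up).} Suppose the claim fails. Then there are $N_k\nearrow N_*$ and two distinct positive minimizers $u_k\neq v_k$ of $e_1(N_k)$ that are not translates of each other. Each minimizer is radial after translation: by rearrangement, $|u|^*$ is again a minimizer, and the equality cases then force $u$ to be a translate of a radial decreasing function. So we may assume $u_k,v_k$ are radially decreasing about the origin. Both satisfy the Euler--Lagrange equation
\[
(\sqrt{-\Delta+1}-1)u=(|x|^{-1}*u^2)u-\mu u
\]
with multipliers $\mu_{k}>0$. By \cite{GuoZeng2017}, there is $\varepsilon_k\to0$ such that $w_k(x):=\varepsilon_k^{3/2}u_k(\varepsilon_k x)$, after normalizing the multiplier, converges in $H^{1/2}$ to an optimizer of the critical Gagliardo--Nirenberg problem. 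The rescaled equation reads
\[
(\sqrt{-\Delta+\varepsilon_k^2}-\varepsilon_k)w_k+\varepsilon_k\mu_k w_k=(|x|^{-1}*w_k^2)w_k .
\]
We choose the scaling so that $\varepsilon_k\mu_k$ is normalized to $1$; this requires checking the energy asymptotics $\mu_k\sim\varepsilon_k^{-1}$. Then $\sqrt{-\Delta+\varepsilon^2}-\varepsilon\to\sqrt{-\Delta}$ as operators, and by \cref{1.2} the limit is the unique radial $Q$ of \eqref{1.1}, up to the fixed mass normalization $\|Q\|_2^2=N_*$. The same holds for $\tilde w_k$ built from $v_k$, with its own scale $\tilde\varepsilon_k$. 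Comparing energies, or comparing $\|(-\Delta)^{1/4}\cdot\|_2$, shows $\tilde\varepsilon_k/\varepsilon_k\to1$. We then rescale both with a common parameter and absorb the discrepancy into the multiplier.

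\emph{Step 2 (difference quotient).} Set $\xi_k:=(w_k-\tilde w_k)/\|w_k-\tilde w_k\|_{L^2\cap H^{1/2}}$. Subtract the equations and divide by the norm. The result is a radial equation
\[
L_{+,k}\xi_k=c_k\,\tilde w_k+o(1),
\]
where $L_{+,k}\to L_+$ and the term $c_k\tilde w_k$ comes from the difference of multipliers. Because the masses are equal, $\langle \xi_k,w_k+\tilde w_k\rangle=0$. Uniform decay estimates for $w_k$ and compactness of $Q(|x|^{-1}*(Q\,\cdot))$ give $\xi_k\to\xi$ strongly, with $\|\xi\|=1$, $\xi$ radial, $\xi\perp Q$, and $L_+\xi=cQ$. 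Since $L_+(x\cdot\nabla Q+Q)=-Q$, which follows from the scaling structure of \eqref{1.1} with the mass term, we get $\xi+c(x\cdot\nabla Q+Q)\in\ker L_+$. By \cref{1.3} this kernel contains no nonzero radial functions, so $\xi=-c(x\cdot\nabla Q+Q)$. The orthogonality $\langle\xi,Q\rangle=0$ then forces $c\langle x\cdot\nabla Q+Q,Q\rangle=-\tfrac c2\|Q\|_2^2=0$. Hence $c=0$ and $\xi=0$, contradicting $\|\xi\|=1$.

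\emph{Main obstacle.} The delicate point is the normalization in Step 1 together with the control of the multiplier term in Step 2. We must show that the two minimizers can be blown up at a common rate, with multipliers differing by $O(\|w_k-\tilde w_k\|)$, and that the error from $\sqrt{-\Delta+\varepsilon_k^2}-\varepsilon_k-\sqrt{-\Delta}$ acting on $\xi_k$ is $o(1)$ in $H^{-1/2}$. For the first point, I would test the Euler--Lagrange equations against $w_k,\tilde w_k$ and use the Pohozaev-type identities to express $\varepsilon_k\mu_k$ through quantities converging to those of $Q$. For the second, I would use the operator bound $0\le\sqrt{p^2}-(\sqrt{p^2+\varepsilon^2}-\varepsilon)\le\varepsilon$. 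Compactness of $\xi_k$ also needs care: $\xi_k$ is bounded in $H^{1/2}$, and radial symmetry plus the decay of $Q$ rule out loss of mass at infinity.
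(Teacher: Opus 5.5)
There is a genuine gap in Step 2, and it comes from the $L^2$-critical scaling degeneracy.

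\emph{The identity you use is wrong.} The claim $L_+(x\cdot\nabla Q+Q)=-Q$ is false. Differentiate $(\sqrt{-\Delta}+\lambda)Q_\lambda=(|x|^{-1}*Q_\lambda^2)Q_\lambda$, where $Q_\lambda=\lambda^{3/2}Q(\lambda\,\cdot)$, at $\lambda=1$. This gives $L_+\Lambda Q=-Q$ with $\Lambda Q:=\frac32Q+x\cdot\nabla Q$, which is \eqref{7.3}. In fact $L_+(x\cdot\nabla Q+Q)=\sqrt{-\Delta}\,Q$.

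\emph{The correct identity gives no contradiction.} The dilations $Q_\lambda$ preserve mass, so $\langle\Lambda Q,Q\rangle=\frac32\|Q\|_2^2-\frac32\|Q\|_2^2=0$. Your limit problem asks for $\xi$ radial with $L_+\xi=cQ$, $\langle\xi,Q\rangle=0$ and $\|\xi\|=1$. It is solved by $\xi=-c\,\Lambda Q$ for a suitably normalized $c\neq0$. The orthogonality you get from equal masses holds automatically, so nothing forces $c=0$.

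This is structural, not a bookkeeping slip. At $\tau=0$ the entire dilation family of $Q$ has mass $N_*$. So the leading-order mass constraint cannot tell apart solutions whose multipliers differ. Your \emph{Main obstacle} paragraph therefore misplaces the difficulty. For instance, $c_k$ is already bounded: pair the difference equation with $\tilde w_k$ and use $\|\tilde w_k\|_2^2=N_k\to N_*$.

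\emph{What your argument does prove, and what is missing.} With $c_k=0$, your argument gives local uniqueness at a fixed multiplier. The missing step is that the mass strictly separates different multipliers along the local branch, and this is a next-order effect in $\tau$.

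\emph{How the paper supplies it.} The paper parametrizes rescaled minimizers by $\tau_N=(\varrho_N-1)^{-2}$. It builds the unique radial branch $w_\tau$ near $Q$ from the radial invertibility \eqref{8.6}, and then proves the monotonicity \eqref{8.9}. The computation runs as follows:
\begin{enumerate}[label=(\roman*),leftmargin=*]
\item From $\partial_\tau\mathscr A_\tau=\frac12(-\Delta+\tau)^{-1/2}$ one gets $w_\tau'\to-\frac12L_+^{-1}\bigl[\frac1{2\pi^2}|x|^{-2}*Q\bigr]$.
\item Combining $L_+\Lambda Q=-Q$ with the $\lambda^{-1}$ scaling of $\langle Q,|x|^{-2}*Q\rangle$ gives $\frac{\dd}{\dd\tau}\|w_\tau\|_2^2\to-\frac12\bigl\langle Q,\frac1{2\pi^2}|x|^{-2}*Q\bigr\rangle<0$.
\item Equal masses then force equal $\tau$, and local uniqueness at that $\tau$ finishes the proof.
\end{enumerate}
To save a difference-quotient proof, you would have to expand $\|w_k\|_2^2-\|\tilde w_k\|_2^2$ to order $\tau$, which is essentially this same computation.

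A minor point: with your convention, $u_\lambda$ minimizes $e_{\lambda m}(N)$, not $e_{m/\lambda}(N)$. This does not affect the reduction to $m=1$.
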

The same constant $\delta_*$ works for all rest masses $m>0$.  Together with
the small-$N$ theorem of \cite{GuoZeng2020},
\cref{1.4} leaves only the intermediate-$N$ regime open.
The proof of \cref{1.2} is global rather than
perturbative.  Both the kinetic operator $\sqrt{-\Delta}$ and the Hartree
interaction are nonlocal, so the ODE shooting, comparison, and
Sturm-Liouville arguments available for local radial equations do not apply.
The harmonic extension realizes $\sqrt{-\Delta}$ as a local operator in one
additional dimension, but ordering two boundary traces only up to their first
crossing does not control the nodal domains of their extensions in the upper
half-space.  This is the obstruction behind the gap discussed in
\cite{FrankLenzmann2010}.  Nonrelativistic-limit and implicit-function
arguments give only local uniqueness
\cite{Lenzmann2009}, while the Hartree term prevents a direct application of
the methods developed for fractional Schr\"odinger equations
\cite{FrankLenzmannSilvestre2016}.
 
\paragraph{Main ideas of the proof.}
The nonlocal kinetic and Hartree terms prevent a direct radial ODE
comparison. We therefore use a global variational argument. The proof
addresses the following difficulties.

\begin{enumerate}[label=(\arabic*),leftmargin=*,itemsep=5pt,topsep=5pt]

\item \emph{Choosing a test function.}
We need both a small error term and a coercive quadratic form
on the constrained subspace. Our initial approach used Galerkin
approximations and numerical calculations using \textit{Mathematica}.
A trial function based on a polynomial of degree~16 led to a lengthy
argument. We later chose a rational function $f_0$ associated with the
Benjamin--Ono linearized operator. Its sine transform and Newton
potential are explicit. Jacobi expansions then reduce the required
quadratic-form comparisons to band-matrix estimates. This choice allows
us to prove both coercivity and a small error bound. The computations
guided the construction, but all estimates below are proved analytically.

\item \emph{Comparing all maximizers with the test function.}
The estimates at $f_0$ are not enough: local uniqueness does not exclude
other ground states. After radial reduction, the quotient is invariant
under amplitude changes and mass-preserving dilations. We use these
invariances to choose a representative of each nonnegative maximizer.
We first maximize its $L^2$ inner product with $f_0$ over all dilations.
An amplitude normalization then gives $g=f_0+w$, where $w$ satisfies two
orthogonality conditions. These conditions alone do not control the
size of $w$. However, global maximality also gives inequalities at every
competing scale. Positivity and Newton's formula convert them into a
bound for the mixed potential generated by $f_0g$, without assuming
that $w$ is small.

\item \emph{Controlling the cubic and quartic terms before localization.}
Without smallness, these terms cannot be treated as small remainders.
We instead use two cancellations. Along $f_0+tw$, the numerator minus
the maximal value of the quotient times the denominator is a quartic
polynomial. Its value and derivative vanish at $t=1$. Subtracting one
quarter of the derivative identity from the value identity eliminates
the full fourth-order coefficient, including the contribution from
the denominator. Equivalently, we test the Euler equation of the
scale-invariant quotient at $g$ against $f_0$. One factor is then fixed,
so the expansion has degree at most three in $w$. The quartic term is
retained in a second exact identity.

Next, we keep the sign of the cubic Hartree term and combine it with
the quadratic form. Since $f_0w=f_0g-f_0^2$, the common direct-potential
term cancels in their difference. The remaining mixed potential is
exactly the one bounded in the preceding step. Cauchy--Schwarz also
gives a lower bound for the quartic Hartree term in terms of the cubic
term, while the variational inequality gives an upper bound. Together
with coercivity, the error estimate, and the second exact identity,
these bounds place every such perturbation in the same small ball.
Bounds on the higher variations then give uniform strict concavity
on this ball and hence uniqueness. Using the relations between the
cubic and quartic terms, rather than separate sharp estimates, makes
the earlier argument much shorter.

\item \emph{Excluding additional zero modes.}
Uniqueness alone does not imply nondegeneracy. We therefore return to
the Hessian estimate at a maximizer. It shows that amplitude and
dilation give the only radial zero directions of the scale-invariant
quotient. Comparison with the fixed-frequency quotient removes the
dilation direction. The variational constraint then removes amplitude.
It follows that $L_+$ has no nonzero radial zero mode. Spherical
harmonics and positivity-improving semigroups complete the
identification of the kernels of $L_+$ and $L_-$.

\item \emph{Passing from local uniqueness to uniqueness at fixed mass.}
The critical uniqueness result classifies all optimizers of the sharp
inequality defining the Chandrasekhar mass. As the mass approaches
this value, concentration \cite{GuoZeng2017} gives convergence to a
critical optimizer. The classification identifies the centered and
normalized limit as $Q$. Radial invertibility then gives a unique
local solution branch containing all near-critical rescaled minimizers.
This is not yet uniqueness at fixed mass, since different parameters
could have the same mass. A dilation identity gives strict monotonicity
of the mass along the branch. Equal masses therefore give equal
parameters and the same minimizer. This proves the Lieb--Yau
uniqueness conjecture for masses sufficiently close to the critical
mass from below.

\end{enumerate}

\paragraph{Organization of the paper.}
\Cref{2.0} gives the variational characterization and radial
reduction.  \Cref{3.0} introduces the scale-invariant quotient
and the centre defect.  \Cref{4.0} contains the Jacobi
identities and all quadratic estimates.  \Cref{5.0}
proves the global mixed-potential bound, and
\cref{6.0} proves localization and uniqueness.
\Cref{7.0} treats the linearized kernels;
\cref{8.0} proves the near-critical consequence.

\section{Preliminaries}\label{2.0}

We recall the variational characterization of ground states and the radial half-line formulation.
Unless an ambient space is specified, $\langle\cdot,\cdot\rangle$ is the
$L^2$ pairing and $\|\cdot\|_p$ is the $L^p$ norm on the underlying
Euclidean space or half-line.  The same brackets denote duality when a
factor belongs to a dual form space.  The subscript $\mathrm{rad}$ denotes
radial functions, with the inherited Sobolev norm, and
$\sigma_{\mathrm{ess}}$ denotes the essential spectrum.
We write $\mathrm D^j F$ for the $j$th real Fr\'echet derivative of a map
$F$; thus $F'=\mathrm D F$.  The italic symbol $D$ below is reserved for
the half-line kinetic operator.  The letter $C$ denotes a positive
constant whose value may change between estimates.

We use the unitary Fourier transform
\[
  \widehat u(\xi):=(2\pi)^{-3/2}
  \int_{\mathbb R^3}e^{-ix\cdot\xi}u(x)\,\dd x
\]
and define \(\sqrt{-\Delta}\) by
\[
  \widehat{\sqrt{-\Delta}\,u}(\xi):=|\xi|\widehat u(\xi).
\]
Its quadratic-form domain is \(H^{1/2}(\mathbb R^3)\).  We write
\(\mathcal D(T)\) for the domain of \(T\).  If \(\varphi\) is
measurable, define the maximal multiplication operator by
\[
 (M_\varphi u)(x):=\varphi(x)u(x).
\]
In the following, we set
\[
  M(u):=\|u\|_2^2,\qquad
  T(u):=\|(-\Delta)^{1/4}u\|_2^2,\qquad
  \mathcal K(u):=T(u)+M(u).
\]
The mass-preserving dilation
\[
  u_\lambda(x):=\lambda^{3/2}u(\lambda x),\qquad \lambda>0,
\]
satisfies
\[
  M(u_\lambda)=M(u),\qquad
  T(u_\lambda)=\lambda T(u),\qquad
  \mathcal H(u_\lambda)=\lambda\mathcal H(u).
\]
The action functional associated with \eqref{1.1} is
\[
  \mathcal S(u):=\frac12\mathcal K(u)-\frac14\mathcal H(u).
\]
Writing
\[
  \mathscr N:=\{u\in H^{1/2}(\mathbb R^3)\setminus\{0\}:
                   \mathcal K(u)=\mathcal H(u)\},
\]
a ground state is defined by a minimizer of \(\mathcal S\) on \(\mathscr N\).  For every \(u\ne0\), the unique positive multiple \(tu\) in
\(\mathscr N\) is determined by
\[
  t^2=\frac{\mathcal K(u)}{\mathcal H(u)},\qquad
  \mathcal S(tu)=\frac14\frac{\mathcal K(u)^2}{\mathcal H(u)}.
\]
Hence ground-state directions minimize \(\mathcal K(u)^2/\mathcal H(u)\),
or equivalently maximize \(\mathcal H(u)\) under \(\mathcal K(u)=1\).

We also use the scale-invariant quotient
\[
  \mathcal J(u):=\frac{M(u)T(u)}{\mathcal H(u)},\qquad
  \mathfrak j:=\inf_{u\ne0}\mathcal J(u),\qquad N_*:=2\mathfrak j.
\]
The standard variational argument \cite{LiebYau1987,FrankLenzmann2010} shows that
\(0<\mathfrak j<\infty\) and that the infimum is attained.  Moreover, if
\(w\) is a complex-valued optimizer and \(w^*\) is the Schwarz
rearrangement of \(|w|\), then
\[
  w(x)=e^{i\vartheta}w^*(x-x_0)
\]
for some \(x_0\in\mathbb R^3\) and \(\vartheta\in\mathbb R\).
The two variational problems are equivalent after normalization.  Indeed,
for every \(u\ne0\),
\[
  \inf_{\lambda>0}
  \frac{\mathcal K(u_\lambda)^2}{\mathcal H(u_\lambda)}
  =4\mathcal J(u),
\]
and the minimum is attained when \(T(u_\lambda)=M(u_\lambda)\).
Multiplying the resulting function by the unique positive constant for
which \(\mathcal K=\mathcal H\) gives a minimizer of \(\mathcal S\) on
\(\mathscr N\).  Conversely, every such minimizer yields a minimizer of
\(\mathcal J\).

For the radial reduction, let \(\mathbb R_+:=(0,\infty)\).
Throughout the half-line analysis below, all Hilbert spaces are real.
Let
\[
  \mathscr H_+:=L^2(\mathbb R_+;\mathbb R).
\]
The unitary sine transform on \(\mathscr H_+\) is
\[
  (\mathcal F_sf)(k):=\sqrt{\frac2\pi}
  \int_0^\infty\sin(kr)f(r)\,\dd r.
\]
Set
\[
  D:=\mathcal F_s^{-1}M_k\mathcal F_s,\qquad
  A:=D+\Id,\qquad X_A:=\mathcal D(A^{1/2}).
\]
Equivalently,
\[
  D=\bigl(-\partial_r^2\bigr)_{\rm D}^{1/2},
\]
where \((-\partial_r^2)_{\rm D}\) is the Dirichlet Laplacian on
\(\mathbb R_+\).  Under the radial unitary transformation below, \(D\)
is precisely the half-line representation of \(\sqrt{-\Delta}\) on
radial functions.
For \(f,h\in X_A\), write
\[
  A[f,h]:=\langle A^{1/2}f,A^{1/2}h\rangle,
  \qquad A[f]:=A[f,f].
\]
For any symmetric bilinear form $B$, write $B[f]:=B[f,f]$.
The same notation is used for self-adjoint operators through their
associated forms, on the common form domain under consideration.
Let \(X_A^*\) be the completion of \(\mathscr H_+\) under the norm
\[
  \|\ell\|_{X_A^*}:=\|A^{-1/2}\ell\|_{L^2(\mathbb R_+)},
\]
so that \(X_A^*\) is the dual space of \(X_A\) in the Hilbert scale
associated with \(A\).
We regard \(Af\in X_A^*\) through
\[
  \langle Af,h\rangle_{X_A^*,X_A}:=A[f,h].
\]
For $f,h\in X_A$, the kinetic pairings are likewise understood as
\[
 \langle Df,h\rangle_{X_A^*,X_A}
 :=\langle D^{1/2}f,D^{1/2}h\rangle,
 \qquad \langle f,Df\rangle:=\|D^{1/2}f\|_2^2.
\]
They are $L^2$ operator pairings only when the corresponding vector
belongs to the operator domain.
For a closed subspace \(Y\) of a Hilbert space, \(P_Y\) denotes the
orthogonal projection onto \(Y\).  For a subspace \(Y\subset X_A\), set
\[
  Y^{\perp_A}:=\{h\in X_A:A[y,h]=0\text{ for every }y\in Y\}.
\]
We write \(Y\oplus_A Z\) for the \(A\)-orthogonal direct sum of \(Y\) and \(Z\); for a single vector, \(f^{\perp_A}:=\{f\}^{\perp_A}\).
For quadratic forms \(S\) and \(T\) with a common domain, we use the form-order
notation
\begin{align*}
 S\preceq T
 &\quad\Longleftrightarrow\quad
 (T-S)[h]\geq0\quad\text{for every }h,\\
 S\prec T
 &\quad\Longleftrightarrow\quad
 (T-S)[h]>0\quad\text{for every }h\ne0.
\end{align*}
The reversed symbols \(\succeq\) and \(\succ\) have the corresponding
meaning.
Define
\[
  \mathfrak C(\rho,\sigma)
  :=4\pi\iint_{\mathbb R_+^2}
  \frac{\rho(r)\sigma(s)}{\max\{r,s\}}\,\dd r\,\dd s,
  \qquad
  \mathfrak b(f):=\mathfrak C(f^2,f^2),
\]
and
\[
  V_f(r):=4\pi\int_0^\infty
  \frac{f(s)^2}{\max\{r,s\}}\,\dd s,
  \qquad
  (K_fh)(r):=4\pi f(r)\int_0^\infty
  \frac{f(s)h(s)}{\max\{r,s\}}\,\dd s.
\]
For $f,h,k\in X_A$, the exchange form is
\[
 K_f[h,k]:=\mathfrak C(fh,fk),\qquad
 K_f[h]:=K_f[h,h].
\]
Thus $K_f[h,k]=\langle h,K_fk\rangle$ whenever the operator pairing is
available; otherwise the identity is read in the form sense.

\begin{lemma}\label{2.1}
The map
\[
  \mathcal U_{\rm rad}: U(r)\rightarrow  \sqrt{4\pi}\,rU(r)
\]
is unitary from \(L^2_{\rm rad}(\mathbb R^3;\mathbb R)\) onto
\(\mathscr H_+\), and maps
\(H^{1/2}_{\rm rad}(\mathbb R^3;\mathbb R)\) onto \(X_A\).  If \(f_U(r):=rU(r)\), then
\begin{equation*}
  \mathcal K(U)=4\pi A[f_U],\qquad
  \mathcal H(U)=4\pi\mathfrak b(f_U).
\end{equation*}
Moreover, \(\mathfrak b\) is a continuous homogeneous quartic polynomial
on \(X_A\), and
\[
  |\mathfrak b(f)|\le C A[f]^2,
  \qquad
  \mathfrak b'(f)[h]
  =4\int_0^\infty V_f(r)f(r)h(r)\,\dd r.
\]
\end{lemma}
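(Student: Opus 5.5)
The plan has four steps: unitarity, transfer of the kinetic term, the Newton formula for $\mathcal H$, and the analytic properties of $\mathfrak b$.

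\emph{Unitarity.} Passing to polar coordinates gives
\[
\int_{\mathbb R^3}|U|^2\,\dd x=4\pi\int_0^\infty r^2U(r)^2\,\dd r=\|\sqrt{4\pi}\,rU\|_2^2 .
\]
Every $f\in\mathscr H_+$ arises this way from $U(x)=f(|x|)/(\sqrt{4\pi}|x|)$, so $\mathcal U_{\rm rad}$ is unitary.

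\emph{Kinetic term.} The key identity is the intertwining $\mathcal U_{\rm rad}\sqrt{-\Delta}\,\mathcal U_{\rm rad}^{-1}=D$ on radial functions. I would prove it through the Fourier transform rather than through operator calculus. For radial $U$,
\[
\widehat U(\xi)=\sqrt{2/\pi}\,|\xi|^{-1}\int_0^\infty \sin(|\xi| r)\,rU(r)\,\dd r ,
\]
so that $|\xi|\widehat U(|\xi|)=(\mathcal F_s f_U)(|\xi|)$. Therefore
\[
T(U)=\int|\xi|\,|\widehat U|^2\,\dd\xi=4\pi\int_0^\infty k\,|(\mathcal F_sf_U)(k)|^2\,\dd k=4\pi\|D^{1/2}f_U\|_2^2,
\]
and the same computation without the factor $k$ gives $M(U)=4\pi\|f_U\|_2^2$. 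Adding, $\mathcal K(U)=4\pi A[f_U]$, and finiteness of either side is equivalent to finiteness of the other. Hence $H^{1/2}_{\rm rad}$ is mapped onto $X_A$, with the normalization $f_U=rU$ versus $\sqrt{4\pi}\,rU$ accounting for the constant. To justify the sine-transform formula I would first take $U\in C_c^\infty$ radial and then pass to the limit by density; Plancherel for $\mathcal F_s$ handles the $L^2$ part.

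\emph{Hartree term.} Newton's theorem gives, for radial $\rho$,
\[
(|x|^{-1}*\rho)(x)=\int\frac{\rho(y)}{\max\{|x|,|y|\}}\,\dd y .
\]
Substituting $\rho=U^2$ and writing both integrals in polar coordinates yields $(4\pi)^2\iint r^2U(r)^2s^2U(s)^2/\max\{r,s\}\,\dd r\,\dd s$. With $f_U(r)=rU(r)$ this equals $4\pi\,\mathfrak C(f_U^2,f_U^2)=4\pi\,\mathfrak b(f_U)$. The same computation shows $V_{f_U}(r)=(|x|^{-1}*U^2)(r)$.

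\emph{Properties of $\mathfrak b$.} For the bound $|\mathfrak b(f)|\le C A[f]^2$ I would transfer back to $\mathbb R^3$: with $U=f/r$ we have $\mathfrak b(f)=\mathcal H(U)/(4\pi)$. The Hardy–Littlewood–Sobolev inequality gives $\mathcal H(U)\le C\|U\|_{12/5}^4$, and the Sobolev embedding $H^{1/2}\subset L^3\cap L^2\subset L^{12/5}$ gives $\|U\|_{12/5}^4\le C\mathcal K(U)^2=C(4\pi)^2A[f]^2$. Alternatively, Kato's inequality $\int|U|^2/|x|\le(\pi/2)T(U)$ works after bounding $|x|^{-1}*U^2$.

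Homogeneity is immediate. The form $\mathfrak b$ is the diagonal of the symmetric quadrilinear form $\mathfrak C(f_1f_2,f_3f_4)$, which obeys the same bound $\le C\prod A[f_i]^{1/2}$ by the polarized HLS/Sobolev argument. This boundedness gives continuity, and indeed smoothness, on $X_A$. Differentiating the polynomial, using the symmetry of $\mathfrak C$, gives
\[
\mathfrak b'(f)[h]=4\,\mathfrak C(f^2,fh)=4\int V_f\,f\,h\,\dd r .
\]

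The only genuinely delicate step is the Fourier/sine-transform identification of $T$ together with the density argument justifying equality of the domains; everything else is bookkeeping.
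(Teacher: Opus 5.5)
Your proposal is correct and follows the paper's proof essentially step by step. Both arguments use the radial Fourier identity $(\mathcal F_s f_U)(k)=k\widehat U(k)$ on smooth functions, extended by Plancherel and density, for $\mathcal K(U)=4\pi A[f_U]$ and the identification of $H^{1/2}_{\rm rad}$ with $X_A$. Both then use Newton's theorem for $\mathcal H(U)=4\pi\mathfrak b(f_U)$, and Hardy--Littlewood--Sobolev with $H^{1/2}\hookrightarrow L^{12/5}$ plus polarization for the bound, the continuity and the derivative of $\mathfrak b$.
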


\begin{proof}
For a smooth real radial function \(U\), with \(f_U(r):=rU(r)\), the
radial Fourier formula in our normalization gives
\[
  (\mathcal F_s f_U)(k)=k\widehat U(k).
\]
The unitary profile is $\mathcal U_{\rm rad}U=\sqrt{4\pi}f_U$, whereas
$f_U=rU$ is used in the nonlinear equation.  In particular,
\[
 \|U\|_2^2=4\pi\|f_U\|_2^2,\qquad
 \|(-\Delta)^{1/4}U\|_2^2
 =4\pi\|D^{1/2}f_U\|_2^2.
\]
Hence Plancherel's theorem yields
\[
  \mathcal K(U)
  =4\pi\int_0^\infty(1+k)|(\mathcal F_sf_U)(k)|^2\,\dd k
  =4\pi A[f_U].
\]
This identity also shows, by passage to the closed form domains, that
the radial transformation identifies
\(H^{1/2}_{\rm rad}(\mathbb R^3;\mathbb R)\) with \(X_A\).
Newton's theorem gives
\[
  (|x|^{-1}*U^2)(r)
  =4\pi\int_0^\infty
    \frac{f_U(s)^2}{\max\{r,s\}}\,\dd s
  =V_{f_U}(r),
\]
and therefore
\[
  \mathcal H(U)=4\pi\mathfrak b(f_U).
\]
Finally, the Hardy-Littlewood-Sobolev inequality and
\(H^{1/2}(\mathbb R^3)\hookrightarrow L^{12/5}(\mathbb R^3)\) give
\[
  |\mathfrak b(f)|\le C A[f]^2.
\]
By polarization, \(\mathfrak b\) is a continuous quartic polynomial,
and differentiation gives
\[
  \mathfrak b'(f)[h]
  =4\int_0^\infty V_f(r)f(r)h(r)\,\dd r.
\]
See, for example, \cite{Lenzmann2007} for the corresponding standard
Hartree estimates.
\end{proof}

\begin{proposition}\label{2.2}
One has
\[
  \sup_{0\ne U\in H^{1/2}(\mathbb R^3;\mathbb C)}
  \frac{\mathcal H(U)}{\mathcal K(U)^2}
  =\frac1{4\pi}
  \sup_{\substack{f\in X_A\\A[f]=1}}\mathfrak b(f),
\]
and radial ground-state directions correspond exactly to the maximizers of
\(\mathfrak b\) on the \(A\)-unit sphere.
\end{proposition}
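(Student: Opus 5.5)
The plan is to reduce the complex, nonradial supremum to the radial real one by symmetrization, and then transport it to the half-line with \cref{2.1}. Both sides are homogeneous of degree zero, since $\mathcal H(tU)=t^4\mathcal H(U)$ and $\mathcal K(tU)^2=t^4\mathcal K(U)^2$. We may therefore replace the left-hand side by $\sup\{\mathcal H(U):\mathcal K(U)=1\}$.

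\emph{Step 1 (reduction to nonnegative radial functions).} For $U\in H^{1/2}(\mathbb R^3;\mathbb C)$ we compare $U$ with $|U|$ and then with $|U|^*$, the Schwarz rearrangement of $|U|$. The masses satisfy $M(|U|^*)=M(|U|)=M(U)$. For the kinetic term, $T(|U|)\le T(U)$ follows from the pointwise bound $\bigl||U(x)|-|U(y)|\bigr|\le|U(x)-U(y)|$ in the Gagliardo representation of $\|(-\Delta)^{1/4}u\|_2^2$. Then $T(|U|^*)\le T(|U|)$ is the fractional Pólya--Szeg\H{o} inequality, which holds for $H^{1/2}$ because the semigroup $e^{-t\sqrt{-\Delta}}$ has a symmetric decreasing kernel. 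For the interaction, $\mathcal H(|U|^*)\ge\mathcal H(|U|)=\mathcal H(U)$ by Riesz's rearrangement inequality applied with the kernel $|x|^{-1}$. Hence $\mathcal H(U)/\mathcal K(U)^2\le\mathcal H(|U|^*)/\mathcal K(|U|^*)^2$, and the supremum over complex $U$ equals the supremum over real radial $U$. The reverse inequality is trivial.

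\emph{Step 2 (transfer to the half-line).} By \cref{2.1}, $U\mapsto f_U=rU$ is a bijection from $H^{1/2}_{\rm rad}(\mathbb R^3;\mathbb R)$ onto $X_A$, with $\mathcal K(U)=4\pi A[f_U]$ and $\mathcal H(U)=4\pi\mathfrak b(f_U)$. Therefore
\[
\frac{\mathcal H(U)}{\mathcal K(U)^2}=\frac{4\pi\,\mathfrak b(f_U)}{16\pi^2A[f_U]^2}=\frac1{4\pi}\,\frac{\mathfrak b(f_U)}{A[f_U]^2}.
\]
Taking suprema and normalizing $A[f]=1$ by homogeneity gives the stated identity. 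Finiteness of both sides follows from $|\mathfrak b(f)|\le CA[f]^2$.

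\emph{Step 3 (correspondence of maximizers).} From \cref{2.0}, ground-state directions are exactly the maximizers of $\mathcal H/\mathcal K^2$, equivalently the maximizers of $\mathcal H$ on $\{\mathcal K=1\}$. Restricting to radial real $U$, Step 2 shows that the map $U\mapsto f_U/\sqrt{A[f_U]}$ sends radial maximizers onto maximizers of $\mathfrak b$ on the $A$-unit sphere. Conversely, any such $f$ gives $U=f/r$, which attains the global supremum by Step 1. The statement concerns radial directions, and for these no further symmetry argument is needed. The strict-rearrangement characterization recalled in \cref{2.0} (optimizers are $e^{i\vartheta}w^*(\cdot-x_0)$) is only needed to add that every maximizer is of this form up to phase and translation.

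\emph{Main obstacle.} The one step that is not purely bookkeeping is the rearrangement inequality $T(|U|^*)\le T(U)$ for the nonlocal kinetic energy. I would cite it as a standard result (Lieb--Loss, or Frank--Lenzmann) rather than reprove it. Everything else follows from \cref{2.1} and homogeneity.
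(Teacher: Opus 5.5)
Your proposal is correct and follows essentially the same route as the paper: symmetrize via $U\mapsto|U|\mapsto|U|^*$ using the diamagnetic, fractional P\'olya--Szeg\H{o} and Riesz inequalities, then transfer the real radial quotient to the half-line with \cref{2.1} and normalize by homogeneity to obtain both the identity and the correspondence of maximizers. The only difference is that you sketch why the rearrangement inequalities hold (the Gagliardo representation and the symmetric decreasing Poisson kernel), whereas the paper simply cites them.
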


\begin{proof}
We first reduce the three-dimensional variational problem to radial real-valued
functions.  Given \(U\in H^{1/2}(\mathbb R^3;\mathbb C)\), let
\[
  u:=|U|,
  \qquad
  u^*:=\text{the Schwarz rearrangement of }u.
\]
The fractional diamagnetic and P\'olya-Szeg\H{o} inequalities, together
with the Riesz rearrangement inequality, give
\[
  M(u^*)=M(U),\qquad
  T(u^*)\le T(U),\qquad
  \mathcal H(u^*)\ge \mathcal H(U).
\]
See, for example, \cite[Section~2.1]{FrankLenzmann2010} for the fractional
rearrangement inequalities and \cite[Chapter~3]{LiebLoss2001} for the Riesz
rearrangement inequality.  Consequently,
\[
  \frac{\mathcal H(U)}{\mathcal K(U)^2}
  \le
  \frac{\mathcal H(u^*)}{\mathcal K(u^*)^2}.
\]
Since \(u^*\) is nonnegative and radial, while radial functions form a
subclass of \(H^{1/2}(\mathbb R^3;\mathbb C)\), it follows that
\[
  \sup_{0\ne U\in H^{1/2}(\mathbb R^3;\mathbb C)}
  \frac{\mathcal H(U)}{\mathcal K(U)^2}
  =
  \sup_{\substack{0\ne U\in H^{1/2}_{\rm rad}(\mathbb R^3)\\ U\ \mathrm{real}}}
  \frac{\mathcal H(U)}{\mathcal K(U)^2}.
\]
Now let \(U\) be real and radial and set \(f_U(r):=rU(r)\).
By \cref{2.1},
\[
  \mathcal K(U)=4\pi A[f_U],
  \qquad
  \mathcal H(U)=4\pi\mathfrak b(f_U).
\]
Hence
\[
  \frac{\mathcal H(U)}{\mathcal K(U)^2}
  =
  \frac1{4\pi}
  \frac{\mathfrak b(f_U)}{A[f_U]^2}.
\]
The map in \cref{2.1} identifies
\(H^{1/2}_{\rm rad}(\mathbb R^3)\) with \(X_A\).  Therefore
\[
  \sup_{0\ne U\in H^{1/2}(\mathbb R^3;\mathbb C)}
  \frac{\mathcal H(U)}{\mathcal K(U)^2}
  =
  \frac1{4\pi}
  \sup_{0\ne f\in X_A}
  \frac{\mathfrak b(f)}{A[f]^2}.
\]
Since \(A\) is quadratic and \(\mathfrak b\) is homogeneous of degree four,
the quotient on the right is invariant under multiplication of \(f\) by a
nonzero scalar.  Thus, after replacing
\[
  f\longmapsto \frac{f}{\sqrt{A[f]}},
\]
we obtain
\[
  \sup_{0\ne U\in H^{1/2}(\mathbb R^3;\mathbb C)}
  \frac{\mathcal H(U)}{\mathcal K(U)^2}
  =
  \frac1{4\pi}
  \sup_{\substack{f\in X_A\\ A[f]=1}}
  \mathfrak b(f).
\]

Finally, by the variational characterization preceding this proposition,
ground-state directions are exactly the maximizers of
\(\mathcal H(U)/\mathcal K(U)^2\).  The preceding identity shows that, for
radial \(U\), this is equivalent to the \(A\)-normalized profile
\[
  \frac{f_U}{\sqrt{A[f_U]}}
\]
being a maximizer of \(\mathfrak b\) on the \(A\)-unit sphere.  Conversely,
every such maximizer \(f\) determines, through
\cref{2.1}, a radial direction maximizing the original
three-dimensional quotient, and hence a radial ground-state direction.
\end{proof}

\begin{lemma}\label{2.3}
Let \(f\) maximize \(\mathfrak b\) subject to \(A[f]=1\), and set
\(\beta:=\mathfrak b(f)\).  Then
\[
  V_ff=\beta Af\qquad\text{in }X_A^*.
\]
Hence \(F:=\beta^{-1/2}f\) satisfies \(AF=V_FF\).  Conversely, every radial
ground state yields, after \(A\)-normalization, a maximizer of \(\mathfrak
b\) on the \(A\)-unit sphere.
\end{lemma}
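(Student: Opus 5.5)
The plan is a Lagrange multiplier argument on the $A$-unit sphere, using the differentiability of $\mathfrak b$ from \cref{2.1}.

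\textbf{Euler--Lagrange equation.} Since $f$ maximizes $\mathfrak b$ on $\{A[f]=1\}$, it also maximizes the homogeneous quotient $\Phi(g):=\mathfrak b(g)/A[g]^2$ on $X_A\setminus\{0\}$. By \cref{2.1}, $\mathfrak b$ is a continuous quartic polynomial on $X_A$, so it is Fr\'echet differentiable with
\[
  \mathfrak b'(f)[h]=4\int_0^\infty V_f f h\,\dd r .
\]
Also $A[\cdot]$ is a bounded quadratic form, with derivative $2A[f,h]$. For fixed $h\in X_A$, the function $t\mapsto\Phi(f+th)$ is differentiable at $t=0$ and has a maximum there. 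Its derivative must vanish:
\[
  \frac{\mathfrak b'(f)[h]}{A[f]^2}-\frac{2\mathfrak b(f)\cdot 2A[f,h]}{A[f]^3}=0 .
\]
With $A[f]=1$ this reads $\int_0^\infty V_ffh\,\dd r=\beta A[f,h]$ for all $h\in X_A$, which is exactly $V_ff=\beta Af$ in $X_A^*$.

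For this pairing to make sense I must check that $V_ff$ defines an element of $X_A^*$, i.e.\ that $h\mapsto\int V_ffh$ is bounded on $X_A$. This follows from the polarized estimate $|\mathfrak C(f^2,fh)|\le C A[f]^{3/2}A[h]^{1/2}$, which is part of the continuity of the quartic form in \cref{2.1}.

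\textbf{Rescaling.} Note $\beta>0$: take $h=f$ in the Euler--Lagrange identity, or simply observe $\mathfrak b(f)=\mathfrak C(f^2,f^2)>0$ because the kernel $1/\max\{r,s\}$ is positive and $f\ne0$. Set $F=\beta^{-1/2}f$. Then $V_F=\beta^{-1}V_f$, so
\[
  V_FF=\beta^{-3/2}V_ff=\beta^{-1/2}Af=AF .
\]

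\textbf{Converse.} This follows from \cref{2.2}. A radial ground state $U$ minimizes $\mathcal S$ on $\mathscr N$, so it is a ground-state direction and maximizes $\mathcal H/\mathcal K^2$. If $U$ is complex, the rearrangement discussion shows $U=e^{i\vartheta}U^*$ with $U^*$ positive radial (here $x_0=0$ by radiality), so we may assume $U$ is real. By \cref{2.2}, $f_U/\sqrt{A[f_U]}$ is then a maximizer of $\mathfrak b$ on the $A$-unit sphere.

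\textbf{Main obstacle.} The only delicate point is justifying the functional setting: differentiability of $\mathfrak b$ on $X_A$, and reading the equation in $X_A^*$ rather than pointwise. Both are supplied by \cref{2.1}, so I expect no real difficulty.
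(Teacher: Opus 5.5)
Your proof is correct and follows essentially the paper's argument. The paper applies the Lagrange multiplier principle to get $4V_ff=2\mu Af$ and then finds $\mu=2\beta$ by pairing with $f$; you instead differentiate the homogeneous quotient $\mathfrak b/A[\cdot]^2$, which yields the multiplier $\beta$ directly, and the rescaling to $F$ and the converse via \cref{2.2} match the paper. One small point: taking $h=f$ in the Euler--Lagrange identity only returns the tautology $\beta=\mathfrak b(f)$, so it is the positivity of the Newton kernel (your second reason, as in the paper) that actually gives $\beta>0$.
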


\begin{proof}
Since \(\mathfrak b\in C^1(X_A)\) and \(A[f]=1\), the standard
Lagrange-multiplier principle gives some \(\mu\in\mathbb R\) such that
\[
  4V_ff=2\mu Af
  \qquad\text{in }X_A^*.
\]
Pairing with \(f\) and using \(A[f]=1\) gives
\[
  4\beta=2\mu,
\]
hence \(\mu=2\beta\) and
\[
  V_ff=\beta Af.
\]
Since \(f\neq0\), the positivity of the Newton kernel gives
\(\beta=\mathfrak b(f)>0\).  For
\(F:=\beta^{-1/2}f\), one has \(V_F=\beta^{-1}V_f\), and therefore
\[
  V_FF
  =\beta^{-3/2}V_ff
  =\beta^{-1/2}Af
  =AF.
\]
The converse follows from \cref{2.2} and the
variational characterization above.
\end{proof}

By the existence result recalled in Section~\ref{2.0} and
\cref{2.2}, the half-line maximization problem admits a
nonnegative maximizer.  It remains to prove that its maximizing set consists
of a single antipodal pair.

\section{The scale-invariant quotient and its expansion}
\label{3.0}
We introduce a quotient invariant under multiplication by constants and
under dilation, and expand it around a fixed function $f_0$.  These
identities will be used to compare maximizers in \cref{6.0}.

For $f\in X_A$, write
\[
 M(f):=\|f\|_2^2,\qquad T(f):=\|D^{1/2}f\|_2^2,
 \qquad A[f]=M(f)+T(f).
\]
Since $D$ has trivial kernel, $T(f)>0$ for $f\ne0$.  Define
\begin{equation}\label{3.1}
 \mathcal B(f):=\frac{\mathfrak b(f)}{4M(f)T(f)}\quad(f\ne0),
 \qquad \beta_*:=\sup_{A[f]=1}\mathfrak b(f).
\end{equation}
For $\lambda>0$, the dilation $f_\lambda(r):=\lambda^{1/2}f(\lambda r)$
satisfies
\[
 M(f_\lambda)=M(f),\qquad T(f_\lambda)=\lambda T(f),\qquad
 \mathfrak b(f_\lambda)=\lambda\mathfrak b(f).
\]
After normalization, $f_\lambda/\sqrt{A[f_\lambda]}$ lies on $A[f]=1$ and
\[
 \mathfrak b\!\left(\frac{f_\lambda}{\sqrt{A[f_\lambda]}}\right)
 =\frac{\lambda\mathfrak b(f)}{(M(f)+\lambda T(f))^2}
 \le\frac{\mathfrak b(f)}{4M(f)T(f)}.
\]
Equality holds exactly when $\lambda=M(f)/T(f)$.  Thus every value of
$\mathcal B$ is attained by $\mathfrak b$ on the $A$-unit sphere.
Conversely, taking $\lambda=1$ gives $\mathfrak b(f)\le\mathcal B(f)$
when $A[f]=1$.  Together with homogeneity, this proves
\begin{equation}\label{3.2}
 \begin{gathered}
 \mathcal B(af_\lambda)=\mathcal B(f)
 \qquad(a\in\mathbb R\setminus\{0\},\ \lambda>0),\\
 \sup_{\lambda>0}\frac{\mathfrak b(f_\lambda)}{A[f_\lambda]^2}
 =\mathcal B(f),\qquad
 \sup_{f\ne0}\mathcal B(f)=\beta_*.
 \end{gathered}
\end{equation}
If $f$ maximizes $\mathfrak b$ on $A[f]=1$, then
$\mathfrak b(f)=\mathcal B(f)=\beta_*$.  Hence equality holds in the
preceding inequality at $\lambda=1$, so $M(f)=T(f)$.  Since their sum
is $1$, every such maximizer satisfies
\begin{equation}\label{3.3}
 M(f)=T(f)=\frac12.
\end{equation}
This conclusion applies to the normalized maximizers, not to all their
multiples and dilations.

Fix a positive $f_0\in\mathcal D(A)$ normalized by
\[
 M(f_0)=T(f_0)=\frac12,\qquad \beta_0:=\mathfrak b(f_0).
\]
The explicit choice of $f_0$ is given in \cref{4.0}.  We do not assume that $f_0$
is a critical point.  Its error in the Euler equation of \cref{2.3} is
\[
 \varepsilon_0:=V_{f_0}f_0-\beta_0Af_0\in X_A^*.
\]
Since $A[f_0]=1$, we have $\beta_0\le\beta_*$ and
$\langle\varepsilon_0,f_0\rangle=0$.  For the expansion below, define
\[
 H_Z[h]:=\beta_0A[h]-\mathfrak C(f_0^2,h^2)
                    -2\mathfrak C(f_0h,f_0h),\qquad h\in X_A.
\]
The relation of $\varepsilon_0$ and $H_Z$ to the first and second
variations is given at the end of this section.
For products $\rho$ of functions in $X_A$, write
\[
 V[\rho](r):=4\pi\int_0^\infty
                   \frac{\rho(s)}{\max\{r,s\}}\,\dd s.
\]
Thus $V_f=V[f^2]$ and
$\mathfrak C(\rho,h^2)=\int_0^\infty V[\rho](r)h(r)^2\,\dd r$.
We first record two estimates for $\mathfrak C$.

\begin{lemma}\label{3.4}
Let $\rho$ and $\sigma$ be products of two functions in $X_A$.  Then
\[
 \mathfrak C(\rho,\rho)\ge0,\qquad
 |\mathfrak C(\rho,\sigma)|^2
 \le\mathfrak C(\rho,\rho)\mathfrak C(\sigma,\sigma).
\]
Moreover,
\begin{equation}\label{3.6}
 0\le\mathfrak b(h)\le\frac{\pi^2}{2}A[h]^2<5A[h]^2
 \qquad(h\in X_A\setminus\{0\}).
\end{equation}
\end{lemma}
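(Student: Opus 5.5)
Positivity and Cauchy--Schwarz for $\mathfrak C$ come from the positive definiteness of the kernel $1/\max\{r,s\}$. For the quartic bound \eqref{3.6} we use a sharp Hardy-type inequality on the half-line.

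\emph{Positive definiteness.} Write
\[
 \frac{1}{\max\{r,s\}}=\int_0^\infty \mathbf 1_{\{t>r\}}\mathbf 1_{\{t>s\}}\,\frac{\dd t}{t^2}.
\]
This holds because $\int_{\max\{r,s\}}^\infty t^{-2}\,\dd t=1/\max\{r,s\}$. By Fubini,
\[
 \mathfrak C(\rho,\sigma)=4\pi\int_0^\infty \Bigl(\int_0^t\rho\Bigr)\Bigl(\int_0^t\sigma\Bigr)\frac{\dd t}{t^2}.
\]
Fubini is justified since $\rho,\sigma\in L^1$: a product of two $L^2$ functions is integrable. Thus $\mathfrak C$ is the inner product of $R(t):=t^{-1}\int_0^t\rho$ and $S(t):=t^{-1}\int_0^t\sigma$ in $L^2(\dd t)$, up to the factor $4\pi$. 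It remains to check finiteness. By Hardy's inequality one could bound $\|R\|_2\le 2\|\rho\|_2$, but $\rho$ need not lie in $L^2$. Instead, finiteness follows from the three-dimensional identification: $\mathfrak C(\rho,\rho)$ is the Coulomb energy of a radial density, which is finite by Hardy--Littlewood--Sobolev and \cref{2.1}. Given finiteness, $\mathfrak C(\rho,\rho)\ge0$ and the Cauchy--Schwarz inequality follow at once. Alternatively, in three dimensions $\mathfrak C(\rho,\sigma)$ equals $4\pi\int|k|^{-2}\widehat{\rho_3}\overline{\widehat{\sigma_3}}$, with the usual constants, for the radial densities $\rho_3(x)=\rho(|x|)/(4\pi|x|^2)$. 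Either route works; I would use the $t$-integral since it is elementary.

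\emph{The quartic bound.} Take $\rho=h^2$. Then
\[
 \mathfrak b(h)=4\pi\int_0^\infty t^{-2}m(t)^2\,\dd t,\qquad m(t):=\int_0^t h^2\le M(h).
\]
Bound one factor by $m(t)\le M(h)$. This gives
\[
 \mathfrak b(h)\le 4\pi M(h)\int_0^\infty t^{-2}m(t)\,\dd t=4\pi M(h)\int_0^\infty \frac{h(s)^2}{s}\,\dd s,
\]
again by Fubini. Next apply the sharp Kato--Herbst inequality restricted to radial functions, which in half-line form reads
\[
 \int_0^\infty \frac{h^2}{s}\,\dd s\le \frac{\pi}{2}\,T(h).
\]
In three dimensions this is $\int|x|^{-1}|U|^2\le\frac{\pi}{2}\langle U,\sqrt{-\Delta}U\rangle$, transferred via \cref{2.1} using $f_U=rU$. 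Hence
\[
 \mathfrak b(h)\le 2\pi^2M(h)T(h)\le \frac{\pi^2}{2}\bigl(M(h)+T(h)\bigr)^2=\frac{\pi^2}{2}A[h]^2,
\]
where the middle step is AM--GM, $4MT\le(M+T)^2$. Finally $\pi^2/2<4.94<5$, which gives \eqref{3.6}. Nonnegativity of $\mathfrak b$ is clear from the $t$-representation.

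\emph{Main obstacle.} The delicate step is getting the sharp constant $\pi/2$ for $\int h^2/s$ in terms of $T(h)$. It comes from Kato's inequality, which I would cite (Herbst, or Lieb--Loss). If one wants a self-contained proof, the half-line version can be obtained by the sine-transform: the operator $D^{-1/2}s^{-1}D^{-1/2}$ has norm $\pi/2$, shown by a Schur test with test functions $k^{-1/2}$. The remaining steps are Fubini bookkeeping and justifying finiteness for products of $X_A$ functions.
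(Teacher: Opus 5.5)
Your proof is correct and is essentially the paper's. The same $t$-integral representation of $1/\max\{r,s\}$ gives positivity and Cauchy--Schwarz, and the same radial Kato inequality $\int_0^\infty h^2/r\,\dd r\le\frac{\pi}{2}T(h)$ together with $4MT\le(M+T)^2$ gives the quartic bound. Your $m(t)\le M(h)$ step is an equivalent substitute for the paper's use of $\max\{r,s\}\ge\sqrt{rs}$ plus Cauchy--Schwarz, and your direct Fubini replaces the paper's approximation by smooth densities.

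One small correction: $\rho,\sigma\in L^1$ alone does not justify Fubini, since the kernel is unbounded at the origin. What is needed is $\mathfrak C(|\rho|,|\sigma|)<\infty$. Your HLS remark supplies this, provided it is applied to absolute values; alternatively, $\max\{r,s\}\ge\sqrt{rs}$ combined with the Kato bound also gives it.
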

\begin{proof}
The identity $1/\max\{r,s\}=\int_{\max\{r,s\}}^\infty t^{-2}\,\dd t$ gives,
for smooth compactly supported densities,
\[
 \mathfrak C(\rho,\sigma)=4\pi\int_0^\infty\frac1{t^2}
 \left(\int_0^t\rho(r)\,\dd r\right)
 \left(\int_0^t\sigma(s)\,\dd s\right)\dd t.
\]
This proves nonnegativity and Cauchy-Schwarz for signed densities.
The continuous four-linear Hartree form from \cref{2.1} extends these
inequalities to products of $X_A$ functions by approximation.

The radial Hardy-Kato inequality
\cite{LiebLoss2001,FrankSeiringer2008} gives
\[
 \int_0^\infty\frac{h(r)^2}{r}\,\dd r\le\frac\pi2T(h).
\]
Since $\max\{r,s\}\ge\sqrt{rs}$, Cauchy-Schwarz yields
\begin{align*}
 \mathfrak b(h)
 &\le4\pi\left(\int_0^\infty r^{-1/2}h(r)^2\,\dd r\right)^2\\
 &\le4\pi M(h)\int_0^\infty\frac{h(r)^2}{r}\,\dd r
 \le2\pi^2M(h)T(h)
 \le\frac{\pi^2}{2}A[h]^2.
\end{align*}
The last step uses $4M(h)T(h)\le(M(h)+T(h))^2$.  Since $\pi^2/2<5$,
the proof is complete.
\end{proof}

To remove the linear terms in $M(f_0+tw)$ and $T(f_0+tw)$, set
\begin{equation}\label{3.7}
 Y_2:=\{w\in X_A:\langle f_0,w\rangle=\langle Df_0,w\rangle=0\}.
\end{equation}
For the $f_0$ chosen in \cref{4.0}, \cref{5.1} will show that every
nonnegative normalized maximizer can be written as $f_0+w$, with
$w\in Y_2$, after multiplication by a positive constant and a dilation.
The following identities hold for every $w\in Y_2$, without any
smallness assumption.

\begin{lemma}\label{3.8}
For $w\in Y_2$,
\begin{equation}\label{3.9}
 \widetilde\Psi(w):=\mathcal B(f_0+w)
 =\frac{\mathfrak b(f_0+w)}{1+2A[w]+4M(w)T(w)}.
\end{equation}
For every $t\in\mathbb R$,
\begin{equation}\label{3.10}
\begin{split}
 \mathfrak b(f_0+tw)
 ={}&\beta_0+4t\langle\varepsilon_0,w\rangle
     +2t^2\bigl(\beta_0A[w]-H_Z[w]\bigr)\\
 &+4t^3\mathfrak C(f_0w,w^2)+t^4\mathfrak b(w).
\end{split}
\end{equation}
\end{lemma}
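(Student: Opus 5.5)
Both identities are direct algebraic expansions, and I will prove them separately. For \eqref{3.9}, I will expand $M(f_0+w)$ and $T(f_0+w)$ using bilinearity. The linear terms are $2\langle f_0,w\rangle$ and $2\langle D^{1/2}f_0,D^{1/2}w\rangle$. Since $f_0\in\mathcal D(A)$, the latter equals $2\langle Df_0,w\rangle$, and both vanish on $Y_2$ by \eqref{3.7}. Hence $M(f_0+w)=\tfrac12+M(w)$ and $T(f_0+w)=\tfrac12+T(w)$, so
\[
 4M(f_0+w)T(f_0+w)=(1+2M(w))(1+2T(w))=1+2A[w]+4M(w)T(w),
\]
using $A[w]=M(w)+T(w)$. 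Dividing $\mathfrak b(f_0+w)$ by this gives \eqref{3.9} from the definition \eqref{3.1}. The denominator is at least $1$, so nothing degenerates.

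For \eqref{3.10}, I will use that $\mathfrak b(f)=\mathfrak C(f^2,f^2)$ with $\mathfrak C$ symmetric and bilinear in the densities. Writing $(f_0+tw)^2=f_0^2+2tf_0w+t^2w^2$ and expanding gives
\[
\begin{aligned}
 \mathfrak b(f_0+tw)={}&\mathfrak C(f_0^2,f_0^2)+4t\,\mathfrak C(f_0^2,f_0w)
 +t^2\bigl(4\mathfrak C(f_0w,f_0w)+2\mathfrak C(f_0^2,w^2)\bigr)\\
 &+4t^3\mathfrak C(f_0w,w^2)+t^4\mathfrak C(w^2,w^2).
\end{aligned}
\]
The constant term is $\beta_0$ and the quartic term is $\mathfrak b(w)$. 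The cubic term already has the required form.

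For the linear term, $\mathfrak C(f_0^2,f_0w)=\int V_{f_0}f_0w\,\dd r=\langle V_{f_0}f_0,w\rangle$. I will rewrite this through the definition of $\varepsilon_0$ as $\langle\varepsilon_0,w\rangle+\beta_0A[f_0,w]$, with the pairing read in $X_A^*$. Then $A[f_0,w]=\langle f_0,w\rangle+\langle Df_0,w\rangle=0$ on $Y_2$, which leaves $4t\langle\varepsilon_0,w\rangle$. This is the only place the orthogonality conditions enter \eqref{3.10}. For the quadratic term, the definition of $H_Z$ gives $\beta_0A[w]-H_Z[w]=\mathfrak C(f_0^2,w^2)+2\mathfrak C(f_0w,f_0w)$, and twice this is exactly the $t^2$ coefficient above.

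The only delicate point is justifying the expansion and the pairings for general $w\in X_A$, not just for smooth functions. I will establish the formula for smooth compactly supported $w$ and pass to the limit. The justification is the continuity of the four-linear Hartree form on $X_A$ from \cref{2.1}. I will also use that $\varepsilon_0\in X_A^*$, with $\langle V_{f_0}f_0,w\rangle$ read as $\tfrac14\mathfrak b'(f_0)[w]$.
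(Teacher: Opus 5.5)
Your proposal is correct and follows the paper's proof essentially line by line. The two orthogonality conditions remove the cross terms in $M$ and $T$. The multilinear expansion of $\mathfrak C$, together with $A[f_0,w]=0$, identifies the linear coefficient with $4\langle\varepsilon_0,w\rangle$ and the quadratic one with $2(\beta_0A[w]-H_Z[w])$.

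The approximation step you plan is harmless but unnecessary. The expansion is exact multilinearity of the continuous Hartree form, and $V_{f_0}f_0,\,Af_0\in L^2$ make all pairings honest $L^2$ pairings. Note also that smooth approximants of $w$ need not stay in $Y_2$. So if you did approximate, you would expand for general $w\in X_A$ first and only then use the orthogonality conditions.
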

\begin{proof}
The two orthogonality conditions give
\[
 M(f_0+tw)=\frac12+t^2M(w),\qquad
 T(f_0+tw)=\frac12+t^2T(w).
\]
In particular, $f_0+tw\ne0$, and multiplication gives
\[
 4M(f_0+tw)T(f_0+tw)=1+2t^2A[w]+4t^4M(w)T(w).
\]
Taking $t=1$ proves \eqref{3.9}.

To expand the numerator, substitute
$(f_0+tw)^2=f_0^2+2tf_0w+t^2w^2$ in both arguments of $\mathfrak C$.
Since $A[f_0,w]=0$, the coefficient of $t$ is
\[
 4\mathfrak C(f_0^2,f_0w)
 =4\langle V_{f_0}f_0,w\rangle
 =4\langle\varepsilon_0,w\rangle.
\]
The coefficient of $t^2$ is
\[
 2\mathfrak C(f_0^2,w^2)+4\mathfrak C(f_0w,f_0w)
 =2\bigl(\beta_0A[w]-H_Z[w]\bigr).
\]
The remaining coefficients are $\beta_0$,
$4\mathfrak C(f_0w,w^2)$, and $\mathfrak b(w)$, proving
\eqref{3.10}.  This is an exact polynomial identity.
\end{proof}

Dividing the expansion by $1+2t^2A[h]+4t^4M(h)T(h)$ shows that, for
$h\in Y_2$,
\[
 \mathrm D\widetilde\Psi(0)[h]=4\langle\varepsilon_0,h\rangle,
 \qquad
 \mathrm D^2\widetilde\Psi(0)[h,h]=-4H_Z[h].
\]
Thus a bound for $\varepsilon_0$ controls the first variation, and
a positive lower bound for $H_Z$ gives a negative second variation at $0$.
The corresponding estimates for our choice of $f_0$ are proved in
\cref{4.0}; the higher-order terms are treated in \cref{6.0}.

Finally, $w\in Y_2$ implies $A[f_0+w]=1+A[w]$, but does not imply
$M(f_0+w)=T(f_0+w)$.  The denominator must therefore be kept in its
exact form:
\[
 1+2A[w]+4M(w)T(w)
 =(1+A[w])^2-(T(w)-M(w))^2.
\]
In general it cannot be replaced by $(1+A[w])^2$.  This distinction
is also used in the Hessian calculation in \cref{7.0}.

\section{The Jacobi-Benjamin-Ono centre and quadratic estimates}
\label{4.0}
The centre is explicit but is not an exact critical point.
The required estimates are proved on the two-constraint space $Y_2$.

\subsection{Jacobi-Laguerre basis and normalization}
We first introduce a basis that will be used to define the test function and to represent the operators in the following estimates. Recall the Jacobi and generalized Laguerre polynomials \cite{Szego1975}, for $n=0,1,\ldots$ and $\alpha,\beta>-1$:
\[
P_n^{(\alpha,\beta)}(t)
=
\frac{(-1)^n}{2^n n!}
(1-t)^{-\alpha}(1+t)^{-\beta}
\frac{d^n}{dt^n}
\left[
  (1-t)^{n+\alpha}(1+t)^{n+\beta}
\right],
\]
\[
L_n^{(\alpha)}(x)
=
\frac{x^{-\alpha}e^x}{n!}
\frac{d^n}{dx^n}
\left(
  e^{-x}x^{n+\alpha}
\right).
\]
Set
\[
\chi(r):=\frac{1-4r^2/9}{1+4r^2/9},
 \]
 and
 \[
 \nu_n:=\int_{-1}^1(1-t)^{1/2}(1+t)^{3/2}
 \bigl[P_n^{(1/2,3/2)}(t)\bigr]^2\,\dd t.
\]
Define
\begin{equation}
 \phi_n(r):=\frac{4\sqrt{2/3}}{\sqrt{\nu_n}}
 \frac{2r/3}{(1+4r^2/9)^2}
 P_n^{(1/2,3/2)}(\chi(r)).
 \label{4.1}
\end{equation}

\begin{lemma}
\label{4.2}
The family $\{\phi_n\}_{n\geq0}$ is an orthonormal basis of
$\mathscr H_+$, and its finite linear span is a form core for $X_A$.
Moreover,
\begin{equation}
 (\mathcal F_s\phi_n)(k)
 =(-1)^n\sqrt{\frac{27}{(n+1)(n+2)}}
 ke^{-3k/2}L_n^{(2)}(3k).
 \label{4.3}
\end{equation}
For a symmetric operator or form $B$, its Jacobi matrix is
$B_{mn}:=B[\phi_m,\phi_n]$, $m,n\ge0$.  In particular,
\begin{equation}
 A_{nn}=2+\frac{2n}{3},
 \qquad
 A_{n,n+1}=\frac{\sqrt{(n+1)(n+3)}}{3},
 \qquad
 A_{mn}=0\quad(|m-n|>1).
 \label{4.4}
\end{equation}

\end{lemma}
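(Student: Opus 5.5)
The plan is to work entirely on the Fourier side, since $\mathcal F_s$ is unitary on $\mathscr H_+$ and diagonalizes $D$. The first step is the sine-transform formula \eqref{4.3}. One way is to use the substitution $t=\chi(r)$, under which $(1-t)^{1/2}(1+t)^{3/2}\,\dd t$ becomes a rational weight in $r$, and then compute the sine transform of $\frac{r}{(1+4r^2/9)^2}P_n^{(1/2,3/2)}(\chi(r))$. An easier route is to run the identity backwards. Define $\psi_n(k):=(-1)^n\sqrt{27/((n+1)(n+2))}\,ke^{-3k/2}L_n^{(2)}(3k)$. Laguerre orthogonality $\int_0^\infty x^2e^{-x}L_m^{(2)}L_n^{(2)}\,\dd x=\frac{(n+2)!}{n!}\delta_{mn}$ with $x=3k$ gives $\|\psi_n\|_2^2=\frac{27}{(n+1)(n+2)}\cdot\frac{(n+1)(n+2)}{27}=1$, and more generally $\langle\psi_m,\psi_n\rangle=\delta_{mn}$. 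Completeness of $\{x e^{-x/2}L_n^{(2)}(x)\}$, i.e.\ of $\{x^{\alpha/2}e^{-x/2}L_n^{(\alpha)}\}$ with $\alpha=2$, in $L^2(\mathbb R_+)$ then makes $\{\psi_n\}$ an orthonormal basis.

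To compute $\mathcal F_s^{-1}\psi_n$, I would use the generating function $\sum_n L_n^{(2)}(x)z^n=(1-z)^{-3}e^{-xz/(1-z)}$. Applied to $\sum_n(-1)^n c_n^{-1}\psi_n z^n$ with suitable scalars, it produces $k\,e^{-3k(1+z)/(2(1-z))}$ times an explicit power of $(1-z)$. The inverse sine transform of $ke^{-ak}$ is explicitly $\sqrt{2/\pi}\,\frac{2ar}{(a^2+r^2)^2}$. With $a=\frac32\frac{1+z}{1-z}$, one recognizes the result as a Jacobi generating function in the variable $\chi(r)$. Matching coefficients then identifies $\mathcal F_s^{-1}\psi_n$ with $\phi_n$, including the normalization $4\sqrt{2/3}/\sqrt{\nu_n}$; the constant is fixed by $\|\phi_n\|=1$, which follows from unitarity.

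\textbf{Main obstacle.} I expect this generating-function matching to be the hardest step. A Jacobi polynomial with parameters $(1/2,3/2)$ has no simple standard generating function. If the matching fails, I would fall back on a uniqueness argument. Orthonormality of $\phi_n$ follows directly from Jacobi orthogonality under $t=\chi(r)$, because $\frac{\dd t}{\dd r}$ and $(1-t)^{1/2}(1+t)^{3/2}$ combine with $\left(\frac{r}{(1+4r^2/9)^2}\right)^2$ into a constant multiple of the Jacobi weight. I would then check that $\mathcal F_s\phi_n$ is $k e^{-3k/2}$ times a polynomial of degree $n$. This holds because $\frac{r\,\chi^j}{(1+4r^2/9)^2}$ is a combination of $\frac{r}{(1+4r^2/9)^{m}}$, whose sine transforms are $e^{-3k/2}$ times polynomials, via residues. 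Two orthonormal sequences obtained by Gram–Schmidt of the same nested polynomial spaces agree up to sign, and the sign $(-1)^n$ comes from comparing leading coefficients.

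Given \eqref{4.3}, completeness of $\{\phi_n\}$ follows from unitarity. For the form-core claim, the span of $\{\psi_n\}$ equals $\{ke^{-3k/2}p(k)\}$, which is dense in $L^2((1+k)\dd k)$. This holds by completeness of $\{x e^{-x/2}L_n^{(2)}(x)\}$ in $L^2(\mathbb R_+,(1+x)\dd x)$, or by the standard polynomial-density argument against exponential weights. That weighted space is exactly $X_A$ on the Fourier side.

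For \eqref{4.4}, write $A_{mn}=\delta_{mn}+\int_0^\infty k\,\psi_m\psi_n\,\dd k$. With $x=3k$ this is $\delta_{mn}+\frac{c_mc_n}{81}(-1)^{m+n}\int x^3e^{-x}L_m^{(2)}L_n^{(2)}\,\dd x$, where $c_n=\sqrt{27/((n+1)(n+2))}$. The three-term recurrence $xL_n^{(2)}=(2n+3)L_n^{(2)}-(n+1)L_{n+1}^{(2)}-(n+2)L_{n-1}^{(2)}$ gives the tridiagonal structure. The diagonal entry is $1+\frac{2n+3}{3}=2+\frac{2n}{3}$. The off-diagonal entry is $\frac{c_nc_{n+1}}{81}(n+1)\frac{(n+3)!}{(n+1)!}$, and after including the sign $-(-1)^{1}=+$ this equals $\frac{\sqrt{(n+1)(n+3)}}{3}$.
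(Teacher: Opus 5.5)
Your proposal is correct. Your fallback is exactly the paper's proof. The paper proves orthonormality from Jacobi orthogonality under $t=\chi(r)$ and expands $\phi_n$ in $r/(r^2+9/4)^m$ with $2\le m\le n+2$; the sine transforms are $ke^{-3k/2}$ times polynomials of degree $m-2$. It then identifies the result with the orthonormalized Laguerre functions by unitarity and reads the sign off the top coefficient. The paper gets completeness from the Jacobi side, and proves the form-core claim by the exponential-weight density argument you mention.

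Two details in that route deserve a line each.
\begin{itemize}
\item The factor $k$ appears only because every exponent satisfies $m\ge2$. For $m=1$ the transform is a multiple of $e^{-ak}$ and does not vanish at $k=0$.
\item The sign must actually be checked. The top coefficient of $e^{3k/2}(\mathcal F_s\phi_n)(k)/k$ comes only from the term $m=n+2$, whose coefficient is a positive multiple of the leading coefficient of $P_n^{(1/2,3/2)}$. Each differentiation in $a$ keeps that top coefficient positive. This matches $(-1)^nL_n^{(2)}(3k)$, whose leading coefficient is $3^n/n!>0$.
\end{itemize}

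The generating-function route you flagged as the main obstacle also works. Your computation gives
\[
\sum_{n\ge0}z^n\,\mathcal F_s\bigl[ke^{-3k/2}L_n^{(2)}(3k)\bigr](r)
=\sqrt{\frac2\pi}\,\frac{16}{27}\,\frac{r}{(1+4r^2/9)^2}\,
\frac{1+z}{\bigl(1+2z\chi(r)+z^2\bigr)^2}.
\]
The needed expansion is the $\lambda=2$ analogue of $U_n-U_{n-1}=V_n$:
\[
\frac{1-\zeta}{(1-2\zeta t+\zeta^2)^2}
=\sum_{n\ge0}\bigl(C_n^{(2)}(t)-C_{n-1}^{(2)}(t)\bigr)\zeta^n,
\qquad
C_n^{(2)}-C_{n-1}^{(2)}=\frac{2\,(4)_{n-1}}{(5/2)_{n-1}}\,P_n^{(1/2,3/2)}\quad(n\ge1).
\]
This follows by lowering $\alpha$ in $P_n^{(3/2,3/2)}\propto C_n^{(2)}$ with the standard contiguous relation. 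For example, $C_1^{(2)}-C_0^{(2)}=2P_1^{(1/2,3/2)}$ and $C_2^{(2)}-C_1^{(2)}=\frac{16}{5}P_2^{(1/2,3/2)}$, consistent with the $P_1,P_2$ listed in the proof of the centre-defect lemma.

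Setting $\zeta=-z$ produces a factor $(-1)^n$ that cancels the one in $\psi_n$. All remaining constants are positive, so $\mathcal F_s\psi_n$ is a positive multiple of $\phi_n$ and equals it by unit norm.

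Compared with the paper, this route gives the identification and the sign at once, with no Gram--Schmidt uniqueness and no separate leading-coefficient computation. The price is a special-function identity plus a routine $L^2$ justification of termwise transformation for $|z|<1$. The paper's route needs only elementary integrals.

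Your tridiagonal computation of $A$ from the Laguerre recurrence is the same as the paper's, and the arithmetic is right.
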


\begin{proof}
Make the change of variables
\[
 t=\frac{1-(2r/3)^2}{1+(2r/3)^2},
 \qquad
 r=\frac32\sqrt{\frac{1-t}{1+t}}.
\]
The map
\[
 f(r)\longmapsto
 \sqrt{\frac32}\,(1-t)^{-1/2}(1+t)^{-3/2}
 f\left(\frac32\sqrt{\frac{1-t}{1+t}}\right)
\]
is unitary from $L^2(\mathbb R_+,\dd r)$ onto
\[
 L^2\bigl((-1,1),(1-t)^{1/2}(1+t)^{3/2}\,\dd t\bigr).
\]
Under this map, \(\phi_n\) is sent to
\[
\frac{P_n^{(1/2,3/2)}}{\sqrt{\nu_n}}.
\]
The orthogonality of the Jacobi polynomials proves that \(\{\phi_n\}\) is orthonormal, while the density of polynomials proves completeness. Hence \(\{\phi_n\}_{n\geq0}\) is an orthonormal basis of \(\mathscr H_+\).
Put $a=3/2$.  Taylor expansion of the Jacobi polynomial at $-1$ gives
\begin{align*}
 \phi_n(r)
 &=\frac{4a^{5/2}}{\sqrt{\nu_n}}\frac{r}{(r^2+a^2)^2}
   P_n^{(1/2,3/2)}\!\left(-1+\frac{2a^2}{r^2+a^2}\right)\\
 &=\frac{4a^{5/2}}{\sqrt{\nu_n}}
   \sum_{j=0}^n\frac{(2a^2)^j}{j!}
   \left.\partial_t^jP_n^{(1/2,3/2)}(t)\right|_{t=-1}
   \frac{r}{(r^2+a^2)^{j+2}}.
\end{align*}
For $a,k>0$ and integers $j\ge1$,
\begin{align*}
 \int_0^\infty\frac{\cos(kr)}{r^2+a^2}\,\dd r
 &=\frac{\pi}{2a}e^{-ak},\\
 \int_0^\infty\frac{\cos(kr)}{(r^2+a^2)^{j+1}}\,\dd r
 &=-\frac1{2ja}\partial_a
       \int_0^\infty\frac{\cos(kr)}{(r^2+a^2)^j}\,\dd r,\\
 \int_0^\infty\frac{r\sin(kr)}{(r^2+a^2)^{j+1}}\,\dd r
 &=\frac{k}{2j}
       \int_0^\infty\frac{\cos(kr)}{(r^2+a^2)^j}\,\dd r.
\end{align*}
Differentiation is dominated for $a$ in compact subsets of $(0,\infty)$;
the boundary terms in the last identity vanish at $0$ and $\infty$.
Induction in $j$ shows that $e^{ak}(\mathcal F_s\phi_n)(k)/k$ is a
polynomial of degree at most $n$.  Its leading coefficient is positive:
\begin{align*}
 \frac1{n!}\partial_t^nP_n^{(1/2,3/2)}(t)
 &=2^{-n}\binom{2n+2}{n},\\
 \lim_{k\to\infty}\frac{e^{ak}(\mathcal F_s\phi_n)(k)}{k^{n+1}}
 &=\frac{\sqrt{2\pi}\,a^{n+3/2}}{2^n(n+1)!\sqrt{\nu_n}}
       \binom{2n+2}{n}>0.
\end{align*}
Unitarity of $\mathcal F_s$ makes these degree-$n$ polynomials orthogonal
for the weight $k^2e^{-2ak}\,\dd k$.  Since
\[
 \int_0^\infty k^2e^{-2ak}\bigl[L_n^{(2)}(2ak)\bigr]^2\,\dd k
 =\frac{(n+1)(n+2)}{8a^3},
\]
normalization and the sign of the leading coefficient give
\[
 (\mathcal F_s\phi_n)(k)
 =(-1)^n\sqrt{\frac{8a^3}{(n+1)(n+2)}}
       ke^{-ak}L_n^{(2)}(2ak).
\]
Taking $a=3/2$ proves \eqref{4.3}.
For $n\ge0$, with $L_{-1}^{(2)}:=0$, the generalized Laguerre polynomials satisfy
\[
xL_n^{(2)}(x) = -(n+1)L_{n+1}^{(2)}(x) +(2n+3)L_n^{(2)}(x) -(n+2)L_{n-1}^{(2)}(x). 
\]
Applying this recurrence to \eqref{4.3} and using the normalization constants gives
\[
A_{nn} = 2+\frac{2n}{3}, \qquad A_{n,n+1} = \frac{\sqrt{(n+1)(n+3)}}{3},
\]
while all matrix elements with \(|m-n|>1\) vanish. This proves \eqref{4.4}.
Under the sine transform, the form norm is the norm of
$L^2(\mathbb R_+,(1+k)\,\dd k)$.  Multiplication by $ke^{-3k/2}$
identifies polynomial approximation with density of polynomials in
\[
 L^2\bigl(\mathbb R_+,(1+k)k^2e^{-3k}\,\dd k\bigr).
\]
Indeed, let $u$ be orthogonal to all polynomials in this weighted space.
Then
\[
 \int_0^\infty u(k)k^j(1+k)k^2e^{-3k}\,\dd k=0,
 \qquad j=0,1,\ldots.
\]
For $z\in\mathbb C$ with $|\operatorname{Re}z|<1$, Cauchy-Schwarz gives
\begin{align*}
 &\int_0^\infty |u(k)|(1+k)k^2e^{-3k+\operatorname{Re}z\,k}\,\dd k\\
 &\quad\le
 \|u\|_{L^2((1+k)k^2e^{-3k}\,\dd k)}
 \left(\int_0^\infty(1+k)k^2e^{-(3-2\operatorname{Re}z)k}\,\dd k\right)^{1/2}
 <\infty.
\end{align*}
Thus $\int_0^\infty u(k)(1+k)k^2e^{-3k+zk}\,\dd k$ is analytic on this
strip and all its derivatives at zero vanish.  It vanishes identically;
on the imaginary axis, uniqueness of the Fourier transform implies
$u=0$.  Hence the polynomials are dense in the form norm.
\end{proof}

We now specify the test function used in this paper:
\[
 f_0(r):=\frac{\phi_0(r)}{\sqrt{2}}
 =\frac{4(3/2)^{5/2}}{\sqrt{\pi}}
   \frac{r}{(r^2+9/4)^2},\qquad r>0.
\]

\begin{lemma}\label{4.5}
The function $f_0$ is positive on $\mathbb R_+$, belongs to
$\mathcal D(A)$, and satisfies
\[
 M(f_0)=T(f_0)=\frac12,\qquad
 A[f_0]=1,\qquad
 \beta_0=\mathfrak b(f_0)=\frac73.
\]
\end{lemma}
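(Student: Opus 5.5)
The plan is to verify each assertion directly from the explicit formula for $\phi_0$ and the sine transform \eqref{4.3} of \cref{4.2}. The one genuine computation is the value of $\mathfrak b(f_0)$.

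\emph{Positivity, $M(f_0)$, domain and $T(f_0)$.} Positivity is immediate from the closed form $f_0(r)=c\,r/(r^2+a^2)^2$ with $a=3/2$ and $c>0$. Since $\phi_0$ is a unit vector by \cref{4.2}, we get $M(f_0)=\tfrac12$. For the kinetic part, take $n=0$ in \eqref{4.3}. Since $L_0^{(2)}=1$, this gives
\[
(\mathcal F_s f_0)(k)=\sqrt{27/4}\,k e^{-3k/2}.
\]
Because $(1+k)\,k e^{-3k/2}\in L^2(\mathbb R_+)$, $f_0$ lies in $\mathcal D(A)$. Plancherel then gives
\[
T(f_0)=\tfrac{27}{4}\int_0^\infty k^3e^{-3k}\,\dd k=\tfrac{27}{4}\cdot\tfrac{6}{81}=\tfrac12 .
\]
Hence $A[f_0]=1$. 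This is consistent with $A_{00}=2$ in \eqref{4.4}.

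\emph{The value $\beta_0=7/3$.} I would use the layer-cake representation from the proof of \cref{3.4}:
\[
\mathfrak b(f_0)=4\pi\int_0^\infty t^{-2}m(t)^2\,\dd t,
\qquad m(t):=\int_0^t f_0(r)^2\,\dd r .
\]
Here $f_0^2=c^2r^2/(r^2+a^2)^4$ with $c^2=16a^5/\pi$. Substitute $r=a\tan\theta$, and write $t=a\tan\Theta$. This gives
\[
m=\frac{16}{\pi}G(\Theta),\qquad G(\Theta):=\int_0^\Theta\sin^2\theta\cos^4\theta\,\dd\theta .
\]
As a check, $G(\pi/2)=\pi/32$, so $m(\infty)=\tfrac12=M(f_0)$. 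Moreover $t^{-2}\,\dd t=\dd\Theta/(a\sin^2\Theta)$. Therefore
\[
\mathfrak b(f_0)=\frac{4\pi}{a}\Bigl(\frac{16}{\pi}\Bigr)^2\int_0^{\pi/2}\frac{G(\Theta)^2}{\sin^2\Theta}\,\dd\Theta .
\]

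Next I would write $G(\Theta)=\Theta/16+P(\Theta)$, where $P$ is an explicit trigonometric polynomial. Linearizing $\sin^2\cos^4$ gives $P(\Theta)=-\tfrac1{64}\sin 2\Theta+\tfrac1{64}\sin4\Theta/ 1\cdot\tfrac14\cdot 4-\dots$; only the exact coefficients from the linearization are needed, and I would read them off directly. Expanding $G^2/\sin^2$ then produces three kinds of terms. Terms with $\Theta^2/\sin^2\Theta$ and $\Theta\,P/\sin^2\Theta$ are handled by integrating by parts against $(\sin^2\Theta)^{-1}=-(\cot\Theta)'$. The boundary terms vanish: at $\Theta=\pi/2$ because $\cot(\pi/2)=0$, and at $0$ because $G(\Theta)\sim\Theta^3/3$. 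After this step only integrals of $\Theta\cot\Theta$ times trigonometric polynomials and pure trigonometric integrals remain. These are elementary: each is a rational multiple of $\pi$, or involves $\log 2$ through $\int_0^{\pi/2}\Theta\cot\Theta\,\dd\Theta$. The $\log2$ contributions must cancel for the stated rational answer. Assembling everything with $a=3/2$ should give $\int_0^{\pi/2} G^2/\sin^2 = 7\pi/1536$, which is exactly what $\beta_0=7/3$ requires.

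\emph{Main obstacle.} The only delicate step is the bookkeeping in the last integral, in particular confirming that the $\log 2$ terms cancel. As an independent check I would compute $\mathfrak b(f_0)$ a second way, on the Fourier side. The function $f_0/r$ corresponds to the three-dimensional profile $U_0=(r^2+a^2)^{-2}$ up to a constant, and its square $U_0^2$ has an explicit Fourier transform of the form $e^{-2a|\xi|}\times(\text{polynomial})$. Then
\[
\mathcal H(U_0)=4\pi\int|\xi|^{-2}\bigl|\widehat{U_0^2}\bigr|^2\,\dd\xi
\]
(with the appropriate normalization) reduces to a Gamma integral, and both computations should agree on $7/3$.
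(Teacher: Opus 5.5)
Your treatment of positivity, $M(f_0)$, $f_0\in\mathcal D(A)$ and $T(f_0)$ is fine. The paper computes the sine transform of $\varphi_a$ directly instead of quoting \eqref{4.3}, but that difference is immaterial.

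\textbf{The gap is in $\beta_0=7/3$, the only substantive claim.} You set up the layer-cake integral but never evaluate it, and the value you say it ``should'' take is wrong. With your own prefactor, $\mathfrak b(f_0)=\frac{1024}{\pi a}I$, where $I:=\int_0^{\pi/2}G^2/\sin^2\Theta\dd\Theta$. For $a=3/2$ this is $\frac{2048}{3\pi}I$, so $I=7\pi/1536$ would give $28/9$, not $7/3$. The correct value is $I=7\pi/2048$; note that $I$ does not depend on $a$ at all. Your linearization is also off: in fact
\[
G(\Theta)=\frac{\Theta}{16}+\frac{\sin2\Theta}{64}-\frac{\sin4\Theta}{64}-\frac{\sin6\Theta}{192},
\]
so the signs of the $\sin2\Theta$ and $\sin4\Theta$ terms are reversed from what you wrote. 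As written, the bookkeeping you defer would not close on the claimed number.

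The fix is simply not to split $G$. Integrate by parts against $-(\cot\Theta)'$ with $G^2$ as a whole. Then integrate by parts once more, using $\sin\Theta\cos^5\Theta=-(\cos^6\Theta/6)'$:
\[
 I=2\int_0^{\pi/2}G\,G'\cot\Theta\dd\Theta
 =2\int_0^{\pi/2}G(\Theta)\sin\Theta\cos^5\Theta\dd\Theta
 =\frac13\int_0^{\pi/2}\sin^2\Theta\cos^{10}\Theta\dd\Theta
 =\frac{7\pi}{2048}.
\]
All boundary terms vanish, because $G(0)=0$, $G^2\cot\Theta=O(\Theta^5)$ at $0$, and $\cos(\pi/2)=0$. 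Hence
\[
\mathfrak b(f_0)=\frac{1024}{\pi a}\cdot\frac{7\pi}{2048}=\frac{7}{2a}=\frac73.
\]
No $\log2$ terms ever arise; in your route they appear only because the splitting $G=\Theta/16+P$ manufactures them.

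This is exactly the paper's computation in other coordinates. The paper splits the Newton kernel into $r>s$ and $r<s$ and evaluates $\int_s^\infty\varphi_a(r)^2r^{-1}\dd r$ in closed form; it is a multiple of $(s^2+a^2)^{-3}$. What remains is the single Beta integral $\int_0^\infty s^2(s^2+a^2)^{-7}\dd s=\frac{21\pi}{2048a^{11}}$, which under $s=a\tan\theta$ becomes $a^{-11}\int_0^{\pi/2}\sin^2\theta\cos^{10}\theta\dd\theta$.

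There is also a small slip in your proposed Fourier cross-check. The transform of $(|x|^2+a^2)^{-4}$ decays like $e^{-a|\xi|}$, not $e^{-2a|\xi|}$; only its squared modulus carries $e^{-2a|\xi|}$.
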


\begin{proof}
For $a>0$, set
\[
 \varphi_a(r):=\frac{4\sqrt{2}\,a^{5/2}}{\sqrt{\pi}}
              \frac{r}{(r^2+a^2)^2}.
\]
Integration by parts and the identity
\[
 \int_0^\infty\frac{\cos(kr)}{r^2+a^2}\,\dd r
 =\frac{\pi}{2a}e^{-ak}
\]
give
\[
 (\mathcal F_s\varphi_a)(k)=2a^{3/2}ke^{-ak}.
\]
Thus $\varphi_a\in\mathcal D(A)$, and Plancherel's theorem yields
\begin{align*}
 M(\varphi_a)&=4a^3\int_0^\infty k^2e^{-2ak}\,\dd k=1,\\
 T(\varphi_a)&=4a^3\int_0^\infty k^3e^{-2ak}\,\dd k=\frac{3}{2a}.
\end{align*}
Splitting the Newton integral into the regions $r>s$ and $r<s$ gives
\begin{align*}
 \mathfrak b(\varphi_a)
 &=8\pi\int_0^\infty\varphi_a(s)^2
       \left(\int_s^\infty\frac{\varphi_a(r)^2}{r}\,\dd r\right)\dd s\\
 &=\frac{4096a^{10}}{3\pi}
       \int_0^\infty\frac{s^2}{(s^2+a^2)^7}\,\dd s\\
 &=\frac{4096a^{10}}{3\pi}\frac{21\pi}{2048a^{11}}
 =\frac{14}{a}.
\end{align*}
For $a=3/2$, we have $\varphi_a=\phi_0$ and
$f_0=\varphi_a/\sqrt{2}$.  Hence
\[
 M(f_0)=T(f_0)=\frac12,\qquad
 A[f_0]=1,\qquad
 \beta_0=\frac14\mathfrak b(\varphi_{3/2})=\frac73.
\]
Positivity follows directly from the formula for $f_0$.
\end{proof}

\input{boson_f0_figures_latex/fig_f0.tex}

For the explicit centre above and each integer $N\ge2$, set
\begin{equation*}
 Y_N:=\overline{\operatorname{span}}^{\,X_A}\{\phi_n:n\ge N\}
 =\{h\in X_A:\langle h,\phi_j\rangle=0,\ 0\le j<N\}.
\end{equation*}

If $h\in X_A$ has its first $N$ coefficients equal to zero, choose
finite Jacobi sums $h_j\to h$ in $X_A$.  Then
\[
 h_j-\sum_{n=0}^{N-1}\langle h_j,\phi_n\rangle\phi_n
 \longrightarrow h\quad\text{in }X_A,
\]
which proves the equality.
For $N=2$, this agrees with
\eqref{3.7}, since \eqref{4.4} gives
\[
 \operatorname{span}\{f_0,Df_0\}
 =\operatorname{span}\{\phi_0,\phi_1\}.
\]
The half-line dilation generator is defined by
\[
 \mathcal D(\Lambda):=
 \{f\in\mathscr H_+:rf'+\tfrac12f\in\mathscr H_+\},
 \qquad \Lambda f:=rf'+\frac12f,
\]
where the derivative is distributional.  We shall use
\begin{equation}\label{4.6}
 \Lambda\phi_0=\frac{\sqrt3}{2}\phi_1,\qquad
 \Lambda\phi_1=-\frac{\sqrt3}{2}\phi_0+\sqrt2\phi_2.
\end{equation}
Both identities follow by differentiating \eqref{4.1} for
$n=0,1$.  In particular, $\|\Lambda\phi_1\|_2^2=11/4$.

\subsection{The Benjamin-Ono comparison and the exchange matrix}
Set
\begin{equation*}
 \Sigma(r):=\frac{Af_0(r)}{f_0(r)}
 =\frac13+\frac{8}{3(1+x^2)},\qquad
 x:=\frac{2r}{3},\qquad
 L_{\mathrm{BO}}:=\frac32(A-M_\Sigma).
\end{equation*}
Under the unitary rescaling $\widetilde h(x)=\sqrt{3/2}\,h(3x/2)$,
\[
 \widetilde{L_{\mathrm{BO}}h}
 =\left(\left(-\partial_x^2\right)_{\rm D}^{1/2}
       +1-\frac4{1+x^2}\right)\widetilde h,
 \qquad L_{\mathrm{BO}}f_0=0.
\]
The Jacobi recurrences give
\begin{equation*}
 (L_{\mathrm{BO}})_{nn}
 =n+\frac12-\frac1{(n+1)(n+2)},\qquad
 (L_{\mathrm{BO}})_{n,n+1}
 =\frac n2\sqrt{1-\frac1{(n+2)^2}}.
\end{equation*}
All other entries with $|m-n|>1$ vanish.  Similarly, for
$s(r)=(1+4r^2/9)^{-1}$,
\begin{equation}\label{4.7}
 (M_s)_{nn}=\frac12+\frac1{4(n+1)(n+2)},\qquad
 (M_s)_{n,n+1}=\frac{\sqrt{(n+1)(n+3)}}{4(n+2)}.
\end{equation}

\begin{lemma}\label{4.8}
Let $k_{mn}:=\langle\phi_m,K_{f_0}\phi_n\rangle$ for $m,n\ge0$.  For $n\ge1$,
\begin{equation}\label{4.9}
\begin{split}
 k_{nn}&=\frac4{n(n+3)},\\
 k_{n,n+1}&=\frac{8(n+2)}{3[(n+1)(n+3)]^{3/2}},\\
 k_{n,n+2}&=\frac{2\sqrt{(n+1)(n+4)}}
                 {3[(n+2)(n+3)]^{3/2}}.
\end{split}
\end{equation}
For $m,n\ge1$ and $|m-n|>2$, one has $k_{mn}=0$.
\end{lemma}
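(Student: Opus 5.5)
The plan is to compute $k_{mn}=\mathfrak C(f_0\phi_m,f_0\phi_n)$ with the cumulative-mass representation from the proof of \cref{3.4},
\[
 k_{mn}=4\pi\int_0^\infty\frac{R_m(t)R_n(t)}{t^2}\,\dd t,
 \qquad R_n(t):=\int_0^t f_0(r)\phi_n(r)\,\dd r,
\]
and to move everything into the Jacobi variable $\chi=\chi(r)$ of \eqref{4.1}.

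\emph{Step 1: $f_0\phi_n\,\dd r$ is a Jacobi-weighted polynomial.} With $x=2r/3$ we have $1+x^2=2/(1+\chi)$, $x^2=(1-\chi)/(1+\chi)$ and $\dd\chi=-4x(1+x^2)^{-2}\dd x$. Since $f_0\phi_n$ is a constant multiple of $x^2(1+x^2)^{-4}P_n^{(1/2,3/2)}(\chi)$, this gives
\[
 f_0\phi_n\,\dd r = c_n\,(1-\chi)^{1/2}(1+\chi)^{3/2}P_n^{(1/2,3/2)}(\chi)\,\dd\chi,
\]
with explicit $c_n$.

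\emph{Step 2: closed form for $R_n$.} For $n\ge1$, the Rodrigues formula gives the classical integration identity
\[
 \int_\chi^1(1-s)^{\alpha}(1+s)^{\beta}P_n^{(\alpha,\beta)}(s)\,\dd s
 =\frac1{2n}(1-\chi)^{\alpha+1}(1+\chi)^{\beta+1}P_{n-1}^{(\alpha+1,\beta+1)}(\chi).
\]
Hence $R_n$ is an explicit multiple of $(1-\chi)^{3/2}(1+\chi)^{5/2}P_{n-1}^{(3/2,5/2)}(\chi)$. In particular $R_n(\infty)=0$ for $n\ge1$, because $\langle f_0,\phi_n\rangle=0$. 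This is exactly why the lemma is stated only for $m,n\ge1$: for $n=0$ the potential carries a Coulomb tail.

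\emph{Step 3: reduce to a one-dimensional Jacobi integral.} Converting $\dd t/t^2$ into $\chi$ gives a factor $(1+\chi)^{-1/2}(1-\chi)^{-3/2}\dd\chi$, up to a constant. The result is
\[
 k_{mn}=C_{mn}\int_{-1}^1(1-\chi)^{3/2}(1+\chi)^{7/2}
 P_{m-1}^{(3/2,5/2)}P_{n-1}^{(3/2,5/2)}\,\dd\chi.
\]
The powers are to be rechecked carefully; the key point is that the weight equals the orthogonality weight of $P^{(3/2,5/2)}$ times a polynomial of degree at most two in $\chi$. Once the weight has the form $(1-\chi)^{3/2}(1+\chi)^{5/2}q(\chi)$ with $\deg q\le 2$, I would expand $q\,P_{n-1}^{(3/2,5/2)}$ with the three-term recurrence, applied twice. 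Orthogonality then immediately gives $k_{mn}=0$ for $|m-n|>2$, and the recurrence coefficients together with the norms $\int w\,[P_k^{(3/2,5/2)}]^2$ give the three entries in \eqref{4.9}.

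The main obstacle is the bookkeeping in Step 3: pinning down the exact weight exponents and the normalizing constants $c_n$, $\nu_n$ and $1/(2n)$ so that the closed forms in \eqref{4.9} come out exactly. Suppose the weight turns out not to differ from the $(3/2,5/2)$ weight by a polynomial; for instance, a factor $(1+\chi)^{-1}$ appears. Then I would first try the alternative pairing $k_{mn}=\langle f_0\phi_m,V[f_0\phi_n]\rangle$. Here $V[f_0\phi_n](r)=4\pi\int_r^\infty R_n(t)t^{-2}\,\dd t$ is computed explicitly from Step 2 as $(1+\chi)\times$(a polynomial of degree $n$), and then paired against $(1-\chi)^{1/2}(1+\chi)^{3/2}P_m^{(1/2,3/2)}$. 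The band structure would then come from orthogonality with respect to the original $(1/2,3/2)$ weight via a two-term connection formula. As a final sanity check, I would verify the formulas numerically for small $m,n$.
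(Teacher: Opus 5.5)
Your plan matches the paper's proof: the cumulative-mass formula for $\mathfrak C$, the Jacobi primitive identity $(\varpi_+\widehat P_{n-1})'=-2n\varpi P_n$, reduction to $(1+t)^2$ times the $(3/2,5/2)$ weight, and the orthonormal three-term recurrence applied twice, with the norm ratio $\mu_{n-1}/\nu_n=4n/(n+3)$ giving the prefactor $8/\bigl(3\sqrt{m(m+3)n(n+3)}\bigr)$. The one correction is the exponent you flagged: the weight is $(1-\chi)^{3/2}(1+\chi)^{9/2}$, not $(1+\chi)^{7/2}$, because $(1-\chi)^{3}(1+\chi)^{5}\cdot(1+\chi)^{-1/2}(1-\chi)^{-3/2}=(1-\chi)^{3/2}(1+\chi)^{9/2}$; with $7/2$ the matrix would be tridiagonal and would force $k_{n,n+2}=0$, so your ``degree two'' reading is the correct one.
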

\begin{proof}
Put
\[
 \varpi(t)=(1-t)^{1/2}(1+t)^{3/2},\quad
 \varpi_+(t)=(1-t)^{3/2}(1+t)^{5/2},\quad
 \widehat P_j=P_j^{(3/2,5/2)}.
\]
For $G_n(r)=\int_0^r f_0\phi_n$ and $t=\chi(r)$, Newton's formula and the
Jacobi primitive identity give
\begin{align*}
 k_{mn}&=4\pi\int_0^\infty\frac{G_m(r)G_n(r)}{r^2}\,\dd r,\\
 G_n(r)&=\frac1{\sqrt{2\nu_0\nu_n}}
         \int_t^1\varpi(u)P_n^{(1/2,3/2)}(u)\,\dd u
 =\frac{\varpi_+(t)\widehat P_{n-1}(t)}{2n\sqrt{2\nu_0\nu_n}},\qquad n\ge1.
\end{align*}
Indeed,
\[
 (\varpi_+\widehat P_{n-1})'=-2n\varpi P_n^{(1/2,3/2)},\qquad
 (P_n^{(1/2,3/2)})'=\frac{n+3}{2}\widehat P_{n-1}.
\]
If $\mu_j=\int_{-1}^1\varpi_+\widehat P_j^2$ and $\widehat p_j=\widehat P_j/\sqrt{\mu_j}$, integration
by parts yields $\mu_{n-1}=4n\nu_n/(n+3)$.  Thus, for $m,n\ge1$,
\[
 k_{mn}=\frac8{3\sqrt{m(m+3)n(n+3)}}
 \int_{-1}^1(1+t)^2\widehat p_{m-1}(t)\widehat p_{n-1}(t)\varpi_+(t)\,\dd t.
\]
The recurrence
\[
 t\widehat p_j=a_{j+1}\widehat p_{j+1}+b_j\widehat p_j+a_j\widehat p_{j-1},\qquad
 a_j=\frac{\sqrt{j(j+4)}}{2(j+2)},\quad
 b_j=\frac1{(j+2)(j+3)},\quad \widehat p_{-1}=0,
\]
applied twice proves \eqref{4.9}.
\end{proof}

\begin{lemma}\label{4.10}
With $W:=V_{f_0}-\frac73\Sigma$, one has
\begin{equation*}
 -\frac79\le W(r)\le\frac19,\qquad r\ge0.
\end{equation*}
\end{lemma}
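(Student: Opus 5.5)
The plan is to compute $V_{f_0}$ in closed form and reduce both inequalities to elementary one-variable estimates. The endpoint values suggest that the bounds are sharp: $W(0)=\tfrac19$ and $W(r)\to-\tfrac79$ as $r\to\infty$. Indeed, at $r=0$ Newton's formula gives
\[
V_{f_0}(0)=4\pi\int_0^\infty \frac{f_0(s)^2}{s}\,\dd s=\frac{32}{3a}=\frac{64}{9}\qquad (a=3/2),
\]
while $\tfrac73\Sigma(0)=7$. As $r\to\infty$, $V_{f_0}(r)\sim 4\pi M(f_0)/r\to0$ and $\tfrac73\Sigma\to\tfrac79$. So the statement should say that $W$ stays between its two endpoint values. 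I would aim to prove that $W$ is nonincreasing on $[0,\infty)$, which gives both bounds at once. Failing monotonicity, I would prove the two inequalities separately.

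\emph{Step 1: closed form.} Write $f_0(r)^2=\frac{16a^5}{\pi}\frac{r^2}{(r^2+a^2)^4}$ and split
\[
V_{f_0}(r)=\frac{4\pi}{r}\int_0^r f_0^2\,\dd s+4\pi\int_r^\infty\frac{f_0(s)^2}{s}\,\dd s .
\]
The second integral is elementary:
\[
4\pi\int_r^\infty\frac{f_0(s)^2}{s}\,\dd s=\frac{32a^5}{3(r^2+a^2)^3}.
\]
The first integral is a standard reduction of $\int_0^r s^2(s^2+a^2)^{-4}\,\dd s$. It produces a rational function plus a multiple of $a^{-7}\arctan(r/a)$.

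Next pass to $x=2r/3=r/a$ and then to $x=\tan\theta$ with $\theta\in[0,\pi/2)$. Then $(1+x^2)^{-1}=\cos^2\theta$, $\Sigma=\tfrac13+\tfrac83\cos^2\theta$, and $\arctan(x)/x=\theta\cot\theta$. The rational part becomes $\cot\theta$ times a polynomial in $\sin\theta,\cos\theta$. Hence
\[
W=c\,\frac{\theta-P(\theta)}{\tan\theta}+R(\cos^2\theta)-\frac79-\frac{56}{9}\cos^2\theta ,
\]
where $P$ is a trigonometric polynomial (sums of $\sin 2k\theta$) with $P(\theta)=\theta+O(\theta^3)$, and $R$ is a polynomial.

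\emph{Step 2: the inequalities.} Each inequality becomes the nonnegativity of an explicit function of the form $\theta\,u(\theta)+v(\theta)$ on $[0,\pi/2]$, with $u$ and $v$ trigonometric polynomials. Equivalently, in terms of $x$, it has the form $\arctan(x)\,p(x)+q(x)\ge0$ with $p,q$ rational.

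I would first try differentiation. Since $\frac{\dd}{\dd x}\arctan x=(1+x^2)^{-1}$, differentiating $\bigl(\arctan(x)\,p+q\bigr)/p$ removes the transcendental term on each interval where $p$ has a fixed sign. This leaves a polynomial inequality, which can be checked by Sturm sequences or by exhibiting positive coefficients after substituting $x^2=y$.

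Alternatively, I would use the two-sided bounds
\[
\frac{x}{1+x^2}\le \frac{x}{\sqrt{1+x^2}}\le\arctan x\le x,\qquad \arctan x\ge \frac{\pi}{2}-\frac1x,
\]
together with Taylor bounds on $[0,1]$ and on $[1,\infty)$. These replace $\arctan$ by rational functions and reduce each case to a polynomial sign check.

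\emph{Main obstacle.} Both bounds are sharp at the endpoints, so crude bounds on $\arctan$ fail near $x=0$ and near $x=\infty$. Near $0$ the expansion of $W-\tfrac19$ begins at order $x^2$, and its leading coefficient must be checked to be negative. Near $\infty$, $W+\tfrac79$ must stay nonnegative; its leading behaviour is $\tfrac{8\pi^2}{3}x^{-1}-\tfrac{56}{9}x^{-2}+\cdots$, whose leading term has the correct sign. The delicate part is the intermediate range, and that is where I would rely on the derivative/Sturm argument.
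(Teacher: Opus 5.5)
Your fallback route is the paper's route: compute $V_{f_0}$ in closed form, compare $\arctan x$ with rational functions, and differentiate. However, two things in the plan do not work as stated.

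\emph{Monotonicity fails.} $W$ is not monotone, so your preferred strategy cannot succeed. Carrying out your Step 1 gives, with $r=3x/2$,
\[
 W(3x/2)=\frac{8\arctan x}{3x}-\frac{7x^4+46x^2+23}{9(1+x^2)^2},
 \qquad
 \frac{\mathrm d}{\mathrm dx}W(3x/2)=\frac{8}{3x^2}\Bigl(\frac{x}{1+x^2}-\arctan x\Bigr)+\frac{64x^3}{9(1+x^2)^3}.
\]
At $x=1$ the derivative equals $\frac{20}{9}-\frac{2\pi}{3}>0$. Numerically, $W$ drops from $\frac19$ to about $-0.037$ near $x\approx0.7$. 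It then climbs back to about $0.059$ near $x\approx1.9$, and only afterwards decreases to $-\frac79$. The two bounds must therefore be proved separately. After division by $\frac{8}{3x}$ they become
\[
 \frac{2x(2x^2+1)}{3(1+x^2)^2}\le\arctan x\le\frac{x(x^4+6x^2+3)}{3(1+x^2)^2}.
\]

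\emph{The upper bound is not a polynomial inequality.} Your claim that differentiation ``leaves a polynomial inequality'' holds only for the lower bound. There the derivative of $\arctan x-\frac{2x(2x^2+1)}{3(1+x^2)^2}$ is $\frac{7x^4+1}{3(1+x^2)^3}>0$. In fact $\arctan x\ge x/\sqrt{1+x^2}$ already suffices, so crude bounds do not fail at infinity, contrary to your last paragraph.

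For the upper bound, set $q(x):=\frac{x(x^4+6x^2+3)}{3(1+x^2)^2}-\arctan x$. Then $q'(x)=\frac{x^2(x^2-1)(x^2-3)}{3(1+x^2)^3}$, which changes sign at $x=1$ and $x=\sqrt3$. No Sturm or positive-coefficient check can conclude from this. You do get $q\ge q(0)=0$ on $[0,1]$. On $[1,\infty)$, however, you need the value at the interior minimum, $q(\sqrt3)=\frac{5\sqrt3}{8}-\frac\pi3\approx0.035>0$. This is a transcendental comparison and requires explicit rational bounds; the paper uses $\sqrt3>\frac{12}{7}$ and $\pi<\frac{22}{7}$. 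This interior critical point, not the endpoints, is the tight spot of the lemma, and your plan does not isolate it.

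A minor slip: the leading term of $W+\frac79$ at infinity is $\frac{4\pi}{3}x^{-1}$, not $\frac{8\pi^2}{3}x^{-1}$. The sign is unaffected.
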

\begin{proof}
Direct Newton integration gives
\begin{align*}
 V_{f_0}(3x/2)
 &=\frac{8\arctan x}{3x}
   +\frac{8(3x^2+5)}{9(1+x^2)^2},\\
 W(3x/2)
 &=\frac{8\arctan x}{3x}
   -\frac{7x^4+46x^2+23}{9(1+x^2)^2}.
\end{align*}
The claimed inequalities are equivalent to
\[
 \frac{2x(2x^2+1)}{3(1+x^2)^2}\le\arctan x
 \le\frac{x(x^4+6x^2+3)}{3(1+x^2)^2}.
\]
For the lower bound,
\[
 \frac{\dd}{\dd x}
 \left(\arctan x-\frac{2x(2x^2+1)}{3(1+x^2)^2}\right)
 =\frac{7x^4+1}{3(1+x^2)^3}>0.
\]
If $q(x)=x(x^4+6x^2+3)/(3(1+x^2)^2)-\arctan x$, then
\[
 q'(x)=\frac{x^2(x^2-1)(x^2-3)}{3(1+x^2)^3},\qquad
 q(0)=0,\qquad q(\sqrt3)=\frac{5\sqrt3}{8}-\frac\pi3>0.
\]
The last inequality follows from $\sqrt3>12/7$ and $\pi<22/7$.
The sign of $q'$ proves the upper bound.
\end{proof}

\input{boson_f0_figures_latex/fig_f0_potentials.tex}

\subsection{Quadratic estimates on the two-constraint space}
For a symmetric band matrix $B=(b_{ij})_{i,j\ge N}$, put
\[
 \omega_n:=\frac1{\sqrt{(n+1)(n+2)}},\qquad
 \rho_i(B):=b_{ii}-\sum_{j\ne i}|b_{ij}|\frac{\omega_j}{\omega_i}.
\]
For every finitely supported real sequence,
\begin{equation}\label{4.11}
 \sum_{i,j\ge N}b_{ij}c_ic_j
 =\sum_{i\ge N}\rho_i(B)c_i^2
 +\sum_{N\le i<j}|b_{ij}|\omega_i\omega_j
 \left(\frac{c_i}{\omega_i}
       +\operatorname{sgn}(b_{ij})\frac{c_j}{\omega_j}\right)^2.
\end{equation}
All form comparisons below are first proved on finite Jacobi sums.
The form-core property then extends them to the indicated closed spaces.

\begin{lemma}\label{4.12}
On $Y_2$,
\begin{equation*}
 K_{f_0}\preceq\frac18A,\qquad
 L_{\mathrm{BO}}\succeq\frac{67}{98}A+\frac1{14}\Id,\qquad
 \frac{205}{252}A\preceq H_Z\preceq\frac73A.
\end{equation*}
\end{lemma}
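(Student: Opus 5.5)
The three inequalities are not independent. I would prove the bounds on $K_{f_0}$ and $L_{\mathrm{BO}}$ by band-matrix estimates in the basis $\{\phi_n\}_{n\ge2}$ of $Y_2$. The bounds on $H_Z$ then follow algebraically from these two, \cref{3.4}, and \cref{4.10}.

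\emph{Reduction for $H_Z$.} By \cref{4.5}, $\beta_0=7/3$. Write $V_{f_0}=\tfrac73\Sigma+W$ and $\tfrac73(A-M_\Sigma)=\tfrac{14}{9}L_{\mathrm{BO}}$. This gives the identity
\[
 H_Z=\tfrac{14}{9}L_{\mathrm{BO}}-M_W-2K_{f_0}
\]
on $X_A$. For the lower bound, combine $L_{\mathrm{BO}}\succeq\frac{67}{98}A+\frac1{14}\Id$, $M_W\preceq\frac19\Id$ (\cref{4.10}) and $K_{f_0}\preceq\frac18A$. Then
\[
 H_Z\succeq\tfrac{67}{63}A+\tfrac19\Id-\tfrac19\Id-\tfrac14A=\tfrac{205}{252}A.
\]
The identity term cancels exactly, which explains the constant $\frac1{14}$. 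For the upper bound, $\mathfrak C(f_0^2,h^2)=\langle V_{f_0}h,h\rangle\ge0$ because $V_{f_0}>0$, and $K_{f_0}[h]=\mathfrak C(f_0h,f_0h)\ge0$ by \cref{3.4}. Hence $H_Z\preceq\beta_0A=\frac73A$ on all of $X_A$, so in particular on $Y_2$.

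\emph{Band estimates.} First, the matrices. By \eqref{4.4} and \cref{4.8}, the matrix $B:=\frac18A-K_{f_0}$ restricted to indices $\ge2$ is pentadiagonal with explicit entries. By the formulas preceding \eqref{4.7}, $B':=L_{\mathrm{BO}}-\frac{67}{98}A-\frac1{14}\Id$ is tridiagonal with explicit entries. I would apply \eqref{4.11} to each and show $\rho_i\ge0$ for all $i\ge2$; the squared terms in \eqref{4.11} are nonnegative.

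Second, the tail. For large $i$, $B_{ii}\approx i/12$ while $|b_{i,i\pm1}|\approx i/24$. The weight ratios $\omega_{i\pm1}/\omega_i\to1$, so $\rho_i(B)$ behaves like a small positive multiple of $1$ up to $O(1/i)$ corrections, and the $k$-entries are $O(i^{-2})$. For $B'$, the diagonal is $\approx i+\frac12-\frac{67}{98}(2+\frac{2i}{3})-\frac1{14}$ and the off-diagonals are $\approx\frac i2-\frac{67}{98}\frac i3$. I would expand $\rho_i$ as a rational function of $i$ and check that its numerator polynomial is positive for $i\ge i_0$, with $i_0$ small.

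Third, the low indices. Weighted diagonal dominance may fail there. A rough computation at $i=2$ for $B$ gives $B_{22}=\frac5{12}-\frac25=\frac1{60}$, while the weighted off-diagonal sum is of comparable size. I would therefore treat indices $2\le i<i_0$ as an exact finite block. Positivity of that block plus its coupling to the tail would be verified by a Schur complement, or by a modified splitting that assigns only part of $|b_{ij}|$ across the block boundary.

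Finally, the extension. Both comparisons are first obtained for finitely supported coefficient sequences. The form-core statement of \cref{4.2}, together with $Y_2=\overline{\operatorname{span}}^{\,X_A}\{\phi_n:n\ge2\}$, extends them to $Y_2$; each side is continuous in the $X_A$ norm by \cref{2.1}.

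The main obstacle is the low-index step for $K_{f_0}\preceq\frac18A$. The margin at $n=2,3$ is tiny ($\frac1{60}$ on the diagonal), so the plain weights $\omega_n$ may not suffice. I would first try \eqref{4.11} as stated. If $\rho_2$ or $\rho_3$ is negative, I would treat $\{\phi_2,\phi_3,\phi_4\}$ as an explicit $3\times3$ block, verify its positive definiteness with room to spare, and bound the interaction with the tail by Cauchy--Schwarz, using the surplus of $\rho_i$ for $i\ge5$.
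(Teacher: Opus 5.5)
Your proposal is correct and follows the paper's proof. The $H_Z$ bounds come from the same identity $H_Z=\frac{14}{9}L_{\mathrm{BO}}-M_W-2K_{f_0}$, with the same cancellation of the $\frac19\Id$ terms. The bound for $L_{\mathrm{BO}}$ is plain \eqref{4.11} on $Y_2$, with margins $\rho_2=0$ and $\rho_n=\frac{12}{49}$ for $n\ge3$.

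For $K_{f_0}$ the paper uses your Schur-complement step with the smallest possible block. It applies \eqref{4.11} on $Y_3$, where the margins are $\frac{17}{90}$, $\frac7{40}$ and $\frac18$ for $n\ge5$, so $(\frac18A-K_{f_0})[z]\ge\frac18\|z\|_2^2$. It then completes the square against the $\phi_2$ row $\bigl(\frac1{60},-\frac{31\sqrt{15}}{5400},-\frac{\sqrt{10}}{100}\bigr)$, which leaves $\frac{229}{48600}a^2$, so your $3\times3$ fallback is not needed.
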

\begin{proof}
For $A/8-K_{f_0}$ restricted to $Y_3$, the first off-diagonal entries
are positive, the second are negative, and
\[
 \rho_3=\frac{17}{90},\qquad
 \rho_4=\frac7{40},\qquad
 \rho_n=\frac18\quad(n\ge5).
\]
Hence $(A/8-K_{f_0})[z]\ge\|z\|_2^2/8$ for $z\in Y_3$.
The remaining entries are
\[
 (A/8-K_{f_0})_{22}=\frac1{60},\quad
 (A/8-K_{f_0})_{23}=-\frac{31\sqrt{15}}{5400},\quad
 (A/8-K_{f_0})_{24}=-\frac{\sqrt{10}}{100}.
\]
For $h=a\phi_2+z$, $z\in Y_3$, the preceding tail bound gives
\begin{align*}
 (A/8-K_{f_0})[h]
 &\ge\frac{a^2}{60}
 -2a\left\langle\frac{31\sqrt{15}}{5400}\phi_3
                   +\frac{\sqrt{10}}{100}\phi_4,z\right\rangle
 +\frac18\|z\|_2^2\\
 &=\frac{229}{48600}a^2
 +\frac18\left\|z-a\left(\frac{31\sqrt{15}}{675}\phi_3
                         +\frac{2\sqrt{10}}{25}\phi_4\right)\right\|_2^2.
\end{align*}
Here
\[
 \frac1{60}-8\left(\frac{961}{1944000}+\frac1{1000}\right)
 =\frac{229}{48600}>0.
\]
This proves $K_{f_0}\preceq A/8$.

For $L_{\mathrm{BO}}-67A/98-\Id/14$ restricted to $Y_2$, its first
off-diagonal entry is
\[
 \frac{40n-67}{147}\sqrt{1-\frac1{(n+2)^2}}>0\qquad(n\ge2),
\]
and the row margins are
\[
 \rho_2=0,\qquad \rho_n=\frac{12}{49}\quad(n\ge3).
\]
Equation \eqref{4.11} proves the second inequality.
Since
\[
 H_Z=\frac{14}{9}L_{\mathrm{BO}}-M_W-2K_{f_0},
\]
we obtain
\[
 H_Z\succeq\frac{14}{9}
 \left(\frac{67}{98}A+\frac1{14}\Id\right)
 -\frac19\Id-\frac14A=\frac{205}{252}A.
\]
The upper bound follows from positivity of both Hartree terms.
\end{proof}

The mixed-potential estimate in \cref{5.0} will use the majorant
$17/10+12s^2$.  This shape is adapted to the basis:
\[
 s=\frac{1+\chi}{2},\qquad
 (M_s)_{mn}=0\ (|m-n|>1),\qquad
 (M_{s^2})_{mn}=0\ (|m-n|>2).
\]
Thus the required bound reduces to a band-form comparison rather than
a sharp optimization of the cubic term.

\begin{lemma}\label{4.13}
For $s(r)=(1+4r^2/9)^{-1}$,
\begin{equation*}
 \frac{17}{10}\|h\|_2^2+12\int_0^\infty s(r)^2h(r)^2\,\dd r
 \le\frac{41}{20}A[h],\qquad h\in Y_2.
\end{equation*}
\end{lemma}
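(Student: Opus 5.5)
The plan is to prove the inequality as a quadratic-form comparison in the Jacobi basis, as in \cref{4.12}. Set
\[
 B:=\frac{41}{20}A-\frac{17}{10}\Id-12M_{s^2},
\]
restricted to $Y_2$. It suffices to prove $B[h]\ge0$ for finite Jacobi sums $h=\sum_{n\ge2}c_n\phi_n$. The form-core property in \cref{4.2} then extends the bound to all of $Y_2$, because both sides are continuous in the $X_A$ norm: $s\le1$, so $M_{s^2}$ is bounded on $\mathscr H_+$.

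\textbf{Step 1: the matrix of $M_{s^2}$.} Since $M_{s^2}=M_sM_s$ and $\{\phi_n\}$ is an orthonormal basis, the Jacobi matrix of $M_{s^2}$ is the square of the tridiagonal matrix \eqref{4.7}. This product is finite row by row, so no convergence issue arises. It gives a pentadiagonal matrix with
\begin{align*}
 (M_{s^2})_{nn}&=(M_s)_{nn}^2+(M_s)_{n,n+1}^2+(M_s)_{n-1,n}^2,\\
 (M_{s^2})_{n,n+1}&=(M_s)_{n,n+1}\bigl((M_s)_{nn}+(M_s)_{n+1,n+1}\bigr),\\
 (M_{s^2})_{n,n+2}&=(M_s)_{n,n+1}(M_s)_{n+1,n+2}.
\end{align*}
Combining these with \eqref{4.4} gives explicit rational-and-radical entries for $B$. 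All off-diagonal entries of $B$ on $Y_2$ are negative: the first off-diagonal receives $-\tfrac{41}{20}A_{n,n+1}$ and $-12(M_{s^2})_{n,n+1}$, and the second receives $-12(M_{s^2})_{n,n+2}$.

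\textbf{Step 2: row margins and the tail.} Next I compute the row margins $\rho_n(B)$ from \eqref{4.11} with the weights $\omega_n=((n+1)(n+2))^{-1/2}$. The weights are adapted to the factors $\sqrt{(n+1)(n+3)}$ and $(n+2)^{-1}$, so the ratios $\omega_{n\pm1}/\omega_n$ and $\omega_{n\pm2}/\omega_n$ should make the margins simplify to explicit rational functions of $n$. The growing parts should cancel: the diagonal $\tfrac{41}{20}(2+\tfrac{2n}{3})$ against the two first off-diagonal terms $\approx\tfrac{41}{60}n$ each. The claim to verify is then that $\rho_n(B)\ge0$ for all $n\ge N_0$, for some small $N_0$ such as $3$ or $4$, with a uniform positive margin $\rho_n(B)\ge c\,\|\cdot\|^2$-type bound (as the $\tfrac18$ in \cref{4.12}).

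\textbf{Step 3: the first rows.} If $\rho_2$, or also $\rho_3$, is negative, I will handle these rows as in the proof of $K_{f_0}\preceq A/8$. Write $h=a\phi_2+z$ with $z\in Y_3$, use the tail bound $B[z]\ge c\|z\|_2^2$, and complete the square in $z$ against the coupling vector $B_{23}\phi_3+B_{24}\phi_4$. This reduces the claim to the scalar inequality
\[
 B_{22}-c^{-1}\bigl(B_{23}^2+B_{24}^2\bigr)\ge0,
\]
which is a finite rational check. At $n=2$ the numbers look favourable: $B_{22}\approx\tfrac{77}{15}-12\cdot0.365\approx0.75$.

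\textbf{Main obstacle.} The hard part is Step 2: showing that the margins of the tail rows stay nonnegative uniformly in $n$. The diagonal and the first off-diagonal both grow linearly in $n$, and the bound rests on an $O(1)$ residue in which the constants $\tfrac{17}{10}$ and $12$ are tight. I would first compute $\rho_n(B)$ symbolically as a function of $n$, clearing the square roots through the chosen weights. If the margin turns out negative for some moderate $n$, I would enlarge the finite block treated in Step 3, doing the square-completion on a $2\times2$ or $3\times3$ leading block, rather than change the weights.
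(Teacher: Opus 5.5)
Your strategy is the paper's: compute the Jacobi matrix of $\frac{41}{20}A-\frac{17}{10}\Id-12M_s^2$ on the full space, use the weighted margins from \eqref{4.11}, and treat the $\phi_2$ row separately. However, Step~3 as you formulate it cannot close. The actual entries are $B_{22}=\frac{61}{120}$ (not about $0.75$), $B_{23}=-\frac{11\sqrt{15}}{120}$ and $B_{24}=-\frac{9\sqrt{10}}{40}$, so $B_{23}^2+B_{24}^2=\frac{607}{960}$. A uniform tail bound $B[z]\ge c\|z\|_2^2$ on $Y_3$ cannot have $c>\frac{7}{20}$. The interior margins are exactly $\frac7{20}$, and widely spread dilations $\lambda^{1/2}\psi(\lambda r)$, $\lambda\to0$, projected onto $Y_3$, give $B[z]/\|z\|_2^2\to\frac{41}{20}-\frac{17}{10}=\frac7{20}$. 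Your scalar check therefore reads at best $\frac{61}{120}-\frac{607}{336}<0$. So, unlike the $K_{f_0}$ estimate in \cref{4.12}, where the uniform constant $\frac18$ suffices, a uniform constant fails here.

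Enlarging the leading block does not help either. The first off-diagonal entries grow like $\frac{41}{60}n$, so the Schur complement of any leading block against a tail controlled only by $\frac7{20}\|z\|_2^2$ has a negative diagonal entry. The missing step is to keep the row-dependent margins of \eqref{4.11} on $Y_3$. Rows $3$ and $4$ lose their lower neighbours, so $\rho_3=\frac{137}{120}$ and $\rho_4=\frac{59}{40}$, while $\rho_n=\frac7{20}$ for $n\ge5$. Completing the square separately in $c_3$ and $c_4$ gives $\frac{61}{120}-\frac{121}{1096}-\frac{81}{236}=\frac{13267}{242490}>0$, which is how the paper concludes. The delicate point is thus this row-$2$ coupling, with slack of about $0.055$, not the tail you flagged as the main obstacle.

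Your sign claim in Step~1 is also wrong. $A$ enters with a plus sign, so $B_{n,n+1}=\frac{41}{20}A_{n,n+1}-12(M_{s^2})_{n,n+1}$, which is negative at $n=2$ but positive for every $n\ge3$. Because \eqref{4.11} uses $|b_{ij}|$, this does not break the method, but this sign pattern is exactly what makes the tail margins clean. From \eqref{4.7} one checks $(M_s)_{nn}\omega_n=(M_s)_{n,n+1}\omega_{n+1}+(M_s)_{n-1,n}\omega_{n-1}$ for $n\ge1$. Hence the alternating vector $((-1)^n\omega_n)$ is annihilated by $M_s$ in rows $n\ge1$ and by $M_s^2$ in rows $n\ge2$. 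With positive first and negative second off-diagonals, $M_{s^2}$ therefore contributes nothing to the interior margins. Since $A$ has weighted margin $1$, those margins equal $\frac{41}{20}-\frac{17}{10}=\frac7{20}$. With the signs you assert, the same computation would subtract roughly $12$ from every margin.
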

\begin{proof}
Let $\mathcal R=41A/20-17\Id/10-12M_{s^2}$.
We compute $M_{s^2}=M_s^2$ on the full space before taking its restriction
to $Y_2$.  Equations \eqref{4.4} and
\eqref{4.7} give, for $n\ge2$,
\begin{align*}
 \mathcal R_{nn}
 &=\frac{41n^3+60n^2-107n-171}{30(n+1)(n+2)},\\
 \mathcal R_{n,n+1}
 &=\frac{(41n^3+66n^2-269n-384)\sqrt{(n+1)(n+3)}}
         {60(n+1)(n+2)(n+3)},\\
 \mathcal R_{n,n+2}
 &=-\frac34\sqrt{\frac{(n+1)(n+4)}{(n+2)(n+3)}}.
\end{align*}
For $n\ge3$, the first off-diagonal is positive.  On the indices $n\ge3$,
the weighted margins are
\[
 \rho_3(\mathcal R)=\frac{137}{120},\qquad
 \rho_4(\mathcal R)=\frac{59}{40},\qquad
 \rho_n(\mathcal R)=\frac7{20}\quad(n\ge5).
\]
The remaining row is
\[
 \mathcal R_{22}=\frac{61}{120},\qquad
 \mathcal R_{23}=-\frac{11\sqrt{15}}{120},\qquad
 \mathcal R_{24}=-\frac{9\sqrt{10}}{40}.
\]
For $h=a\phi_2+\sum_{n\ge3}c_n\phi_n$, completing squares yields
\begin{align*}
 \mathcal R[h]\ge{}&
 \frac{13267}{242490}a^2
 +\frac{137}{120}\left(c_3-\frac{11\sqrt{15}}{137}a\right)^2\\
 &+\frac{59}{40}\left(c_4-\frac{9\sqrt{10}}{59}a\right)^2
 +\frac7{20}\sum_{n\ge5}c_n^2,
\end{align*}
since
\[
 \frac{61}{120}-\frac{121}{1096}-\frac{81}{236}
 =\frac{13267}{242490}>0.
\]
The bound first holds for finite sums; $X_A$-continuity gives it on $Y_2$.
\end{proof}

\subsection{The centre defect}
\begin{lemma}\label{4.14}
For any $ h\in Y_2$, we have 
\begin{equation}\label{4.15}
 |\langle\varepsilon_0,h\rangle|
 \le\frac1{64}A[h]^{1/2}.
\end{equation}
\end{lemma}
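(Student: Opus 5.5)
The plan is to write the defect explicitly in terms of the Benjamin--Ono potential $\Sigma$ and the function $W$ of \cref{4.10}. Since $Af_0=\Sigma f_0$ and $\beta_0=7/3$,
\[
 \varepsilon_0=V_{f_0}f_0-\tfrac73\Sigma f_0=Wf_0,
\]
which is a genuine $L^2$ function. Hence, for $h\in Y_2$,
\[
 \langle\varepsilon_0,h\rangle=\langle Wf_0,h\rangle .
\]
Because $h\perp\phi_0,\phi_1$, we may replace $Wf_0$ by its projection onto $Y_2$:
\[
 \langle\varepsilon_0,h\rangle=\langle P_{Y_2}(Wf_0),h\rangle .
\]
We already know $\langle\varepsilon_0,f_0\rangle=0$. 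The coefficient along $\phi_1$ is irrelevant on $Y_2$. By Cauchy--Schwarz and $A\succeq 2\Id$ on $Y_2$ (from \eqref{4.4}, every diagonal entry is at least $2$ there), the bound $\|h\|_2\le A[h]^{1/2}/\sqrt2$ holds. Therefore
\[
 |\langle\varepsilon_0,h\rangle|\le\frac{1}{\sqrt2}\,\|P_{Y_2}(Wf_0)\|_2\,A[h]^{1/2},
\]
and it suffices to prove
\[
 \|P_{Y_2}(Wf_0)\|_2\le \frac{\sqrt2}{64}.
\]
If that crude bound turns out too weak, a sharper version uses the dual norm: bound $\|A^{-1/2}P_{Y_2}Wf_0\|$ through the tridiagonal structure of $A$.

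To estimate $\|P_{Y_2}(Wf_0)\|_2^2=\|Wf_0\|_2^2-\langle Wf_0,\phi_0\rangle^2-\langle Wf_0,\phi_1\rangle^2$, I will compute everything in the variable $x=2r/3$. The formula from \cref{4.10} gives
\[
 W(3x/2)=\frac{8\arctan x}{3x}-\frac{7x^4+46x^2+23}{9(1+x^2)^2},
\]
and $f_0$ is an explicit rational function. We have $\langle Wf_0,\phi_0\rangle=0$ by $\langle\varepsilon_0,f_0\rangle=0$. The remaining quantities are
\[
 \int W^2f_0^2\,\dd r\qquad\text{and}\qquad\int Wf_0\phi_1\,\dd r .
\]
Both are integrals of rational functions times $1$, $\arctan x/x$ or $(\arctan x/x)^2$ against $x^2/(1+x^2)^k$-type weights, and they evaluate in closed form via $x=\tan\theta$. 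The $(\arctan x)^2$ term gives integrals like $\int\theta^2\cos^m\theta\,\sin^{-2}\theta\cdots\dd\theta$, which produce $\pi^2$, $\pi\log 2$ or Catalan-type constants. Alternatively, I will avoid exact evaluation by bounding $W^2$ pointwise. A crude bound $|W|\le7/9$ is far too weak, since it gives about $0.55$ rather than $0.022$. I will therefore use the sharper rational two-sided bounds on $\arctan x$ from the proof of \cref{4.10}, possibly refined with higher-order Padé-type bounds, to get rational upper and lower bounds for $W$. These give a rational upper bound for $\int W^2f_0^2$ and an interval for $\langle Wf_0,\phi_1\rangle$.

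The main obstacle is precisely this cancellation. $Wf_0$ is orthogonal to $\phi_0$, and presumably a large part of its mass lies along $\phi_1$, so the $Y_2$-part is a small difference of comparable quantities. The rational bounds on $\arctan$ must therefore be tight enough that the subtraction still leaves a certified margin below $2/64^2\approx4.9\times10^{-4}$. To get this, I will expand $Wf_0$ directly in the Jacobi basis instead: compute $\langle Wf_0,\phi_n\rangle$ for $n=2,3,4$ exactly, using the Jacobi structure and $t=\chi(r)$, in which $\arctan x$ becomes an elementary function of $t$. I then bound the tail $\sum_{n\ge5}$ by $\|Wf_0\|_2^2$ minus the computed squares, using the rational pointwise bounds only for the total norm. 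This is where the numerical margin must be checked most carefully.
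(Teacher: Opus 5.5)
\textbf{There is a genuine gap: the main reduction rests on a false inequality and aims at a false target.} Your identity $\varepsilon_0=Wf_0$ (from $Af_0=\Sigma f_0$ and $\beta_0=7/3$) is correct, but the problem is not a thin numerical margin.

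\emph{First, $A\succeq2\Id$ on $Y_2$ is false.} Diagonal entries $A_{nn}\ge2$ do not bound the form from below, because the off-diagonal entries $A_{n,n+1}=\sqrt{(n+1)(n+3)}/3\approx(n+2)/3$ are of comparable size. The same reasoning would give $A\succeq2\Id$ on all of $\mathscr H_+$, whereas $\sigma(A)=[1,\infty)$. By min-max, the infimum of $A[h]/\|h\|_2^2$ over the codimension-two space $Y_2$ is at most the third min-max value of $A$, which is $1$. So only $A[h]\ge\|h\|_2^2$ is available, and your route would need $\|P_{Y_2}(Wf_0)\|_2\le1/64$.

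\emph{Second, even the weaker target $\sqrt2/64$ is false.} There is no cancellation along $\phi_1$. We have $\Lambda f_0=\sqrt{3/8}\,\phi_1$. Differentiating $\mathfrak b((f_0)_\lambda)=\lambda\beta_0$ and $A[(f_0)_\lambda]=\frac12+\frac\lambda2$ at $\lambda=1$ gives $4\langle V_{f_0}f_0,\Lambda f_0\rangle=\beta_0$ and $A[f_0,\Lambda f_0]=\frac14$. Hence $\langle\varepsilon_0,\phi_1\rangle=0$ and $\|P_{Y_2}(Wf_0)\|_2=\|\varepsilon_0\|_2$. The exact coefficients $e_n:=\langle\varepsilon_0,\phi_n\rangle$ include $e_2=\sqrt3/90$ and $e_3=\sqrt5/150$. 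So already $\|\varepsilon_0\|_2^2\ge\frac1{2700}+\frac1{4500}\approx5.9\times10^{-4}$; the full value is about $6.5\times10^{-4}$. This exceeds $2/4096\approx4.9\times10^{-4}$, let alone $1/4096$. The obstacle you single out, certifying a small difference of comparable quantities, is therefore not the real one: the $L^2$ inequality you aim for does not hold.

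\textbf{Missing idea: work in the dual norm of $(Y_2,A)$.} This exploits that $\varepsilon_0$ is concentrated on $\phi_2,\phi_3$, where $A$ is substantially larger than $\Id$ (e.g.\ $A_{22}=10/3$). Your one-line fallback points this way, and it would suffice. Indeed, $\|A^{-1/2}\varepsilon_0\|_2$ dominates the $Y_2$ dual norm since $Y_2\subset X_A$, and numerically $\|A^{-1/2}\varepsilon_0\|_2^2\approx1.8\times10^{-4}<1/4096$. But this is the whole content of the proof, and you give no way to carry it out.

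The paper computes every $e_n$ in closed form, through the Chebyshev expansion of $(1+t)P_n^{(1/2,3/2)}$ and the $\arccos$ moments. It then takes the corrector $u=(e_2/A_{22})\phi_2$ and uses $A[v]\ge\|v\|_2^2$ only for the remainder. With $\Pi_2$ the $L^2$ projection onto $\{\phi_0,\phi_1\}^\perp$, this gives
\[
 \sup_{h\in Y_2}\bigl\{2\langle\varepsilon_0,h\rangle-A[h]\bigr\}
 \le\frac1{9000}+\|\Pi_2(\varepsilon_0-Au)\|_2^2
 =\frac1{6000}+\sum_{n\ge4}e_n^2<\frac1{4096}.
\]
The margin in this bookkeeping is only about $2\times10^{-5}$. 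Your plan of bounding the tail by subtracting computed squares from a pointwise $\arctan$ bound on $\|Wf_0\|_2^2$ would therefore need that integral to roughly $3\%$ accuracy. That is delicate, because $W$ is a near-cancellation of two terms of order one. In the paper, the explicit decay $e_n^2=O(n^{-8})$ is what makes the tail harmless.
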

\begin{proof}
Since $f_0\in\mathcal D(A)$ and $V_{f_0}\in L^\infty$, we have
$\varepsilon_0\in L^2$. Here, we aim to estimate the norm of its restriction to
$(Y_2,A)$, rather than its $L^2$ norm.

Write $P_n=P_n^{(1/2,3/2)}$ and
\[
 \varpi(t)=(1-t)^{1/2}(1+t)^{3/2},\qquad
 \kappa_n=\prod_{j=0}^{n-1}\frac{2j+3}{2j+4},\qquad \kappa_0=1.
\]
The Chebyshev polynomials $U_j$ satisfy
$U_j(\cos\theta)=\sin((j+1)\theta)/\sin\theta$ and
\[
 (1+t)P_n(t)=\kappa_n
 \left(U_n(t)+\frac{n+1}{n+2}U_{n+1}(t)\right),\qquad
 \nu_n=\pi\kappa_n^2\frac{n+1}{n+2}.
\]
For completeness, the polynomial in parentheses vanishes at $t=-1$.
After division by $1+t$, it has degree $n$ and is orthogonal to every
polynomial of degree less than $n$ for the weight $\varpi$. Indeed,
this follows from the orthogonality of $U_n,U_{n+1}$ for
$\sqrt{1-t^2}\,\dd t$. Comparing leading coefficients gives the first
identity. The coefficient of $U_n$ in $P_n$ is
$2\kappa_n(n+1)/(n+2)$; multiplying the first identity by
$P_n(t)\sqrt{1-t^2}$ and integrating gives the formula for $\nu_n$.

For every integer $m\ge1$, integration by parts gives
\[
 \int_0^\pi\theta\sin(m\theta)\,\dd\theta
 =\frac{(-1)^{m+1}\pi}{m}.
\]
Therefore, for $j\ge1$,
\begin{align*}
 \int_{-1}^1(1+t)\arccos(t)U_j(t)\,\dd t
 =\int_0^\pi\theta(1+\cos\theta)\sin((j+1)\theta)\,\dd\theta
 =\frac{(-1)^{j+1}\pi}{j(j+1)(j+2)}.
\end{align*}
For $j=0$ the same integral is $3\pi/4$. Using the identity for
$(1+t)P_n$, we obtain
\[
 \int_{-1}^1(1+t)^2\arccos(t)P_n(t)\,\dd t
 =\frac{2(-1)^{n+1}\pi\kappa_n(2n+3)}
 {n(n+1)(n+2)^2(n+3)},\qquad n\ge1,
\]
whereas the integral for $n=0$ is $5\pi/6$.

We now compute $k_{n0}=\langle\phi_n,K_{f_0}\phi_0\rangle$.
Since $\phi_0=\sqrt2 f_0$, we have
$K_{f_0}\phi_0=V_{f_0}\phi_0$. With $t=\chi(r)$, the change of
variables used in \cref{4.2} and the explicit Newton potential give
\begin{align*}
 k_{n0}
 =\frac1{\sqrt{\nu_0\nu_n}}\biggl[
 &\frac43\int_{-1}^1(1+t)^2\arccos(t)P_n(t)\,\dd t\\
 &+\frac49\int_{-1}^1\varpi(t)(t^2+5t+4)P_n(t)\,\dd t\biggr].
\end{align*}
The second integral vanishes for $n\ge3$. For the remaining indices,
\[
 P_0=1,\qquad P_1=2t-\frac12,\qquad
 P_2=\frac{15}{4}t^2-\frac54t-\frac58,
\]
and direct integration gives
\[
 \int_{-1}^1\varpi(t)(t^2+5t+4)P_n(t)\,\dd t
 =\begin{cases}
  11\pi/4,&n=0,\\
  \pi,&n=1,\\
  5\pi/64,&n=2.
 \end{cases}
\]
These values follow by writing $\varpi=(1+t)\sqrt{1-t^2}$ and using
\[
 \int_{-1}^1\sqrt{1-t^2}\,\dd t=\frac\pi2,\quad
 \int_{-1}^1t^2\sqrt{1-t^2}\,\dd t=\frac\pi8,\quad
 \int_{-1}^1t^4\sqrt{1-t^2}\,\dd t=\frac\pi{16},
\]
with all odd moments equal to zero. Substitution yields
\[
 \begin{gathered}
 k_{00}=\frac{14}{3},\qquad k_{10}=\frac7{3\sqrt3},\qquad
 k_{20}=\frac1{15\sqrt6},\\
 k_{n0}=\frac{(-1)^{n+1}8\sqrt2(2n+3)}
 {3n(n+1)^{3/2}(n+2)^{3/2}(n+3)},\qquad n\ge3.
 \end{gathered}
\]
Since $e_n:=\langle\varepsilon_0,\phi_n\rangle
=(k_{n0}-\beta_0A_{n0})/\sqrt2$, combine with \eqref{4.4}, we have 
\[
 \begin{gathered}
 e_0=e_1=0,\qquad e_2=\frac{\sqrt3}{90},\qquad
 e_3=\frac{\sqrt5}{150},\\
 e_n^2=\frac{64(2n+3)^2}
 {9n^2(n+1)^3(n+2)^3(n+3)^2},\qquad n\ge3.
 \end{gathered}
\]

To use the form norm, take $u=(\sqrt3/300)\phi_2\in Y_2$.
This coefficient is $e_2/A_{22}$, since $A_{22}=10/3$. Hence
\[
 A[u]=\langle\varepsilon_0,u\rangle=\frac1{9000}.
\]
The tridiagonal matrix of $A$ also gives
\[
 \langle\varepsilon_0-Au,\phi_2\rangle=0,\qquad
 \langle\varepsilon_0-Au,\phi_3\rangle=\frac{\sqrt5}{300},\qquad
 \langle\varepsilon_0-Au,\phi_n\rangle=e_n\quad(n\ge4).
\]
Let $\Pi_2$ denote the $L^2$ projection onto
$\{\phi_0,\phi_1\}^{\perp}$. The variational identity for the squared
dual norm is
\[
 \sup_{0\ne h\in Y_2}\frac{|\langle\varepsilon_0,h\rangle|^2}{A[h]}
 =\sup_{h\in Y_2}\{2\langle\varepsilon_0,h\rangle-A[h]\}.
\]
It follows by optimizing the scalar multiple of any nonzero $h$.
For $h=u+v$, $v\in Y_2$, completing the square and using
$A[v]\ge\|v\|_2^2$ yield
\begin{align*}
 2\langle\varepsilon_0,h\rangle-A[h]
 &=2\langle\varepsilon_0,u\rangle-A[u]
   +2\langle\Pi_2(\varepsilon_0-Au),v\rangle-A[v]\\
 &\le\frac1{9000}+\|\Pi_2(\varepsilon_0-Au)\|_2^2\\
 &=\frac1{6000}+\sum_{n\ge4}e_n^2.
\end{align*}
Here the projection is necessary: $Au$ can have a $\phi_1$ component,
which does not pair with $v\in Y_2$.

Finally,
\[
 e_4^2=\frac{121}{2976750}<\frac1{24000},\qquad
 e_5^2=\frac{169}{16669800}<\frac1{90000}.
\]
For $n\ge6$, the inequalities
$2n+3\le\frac52(n+1)$, $n(n+3)\ge(n+1)^2$, and $n+2\ge n+1$ give
\[
 e_n^2\le\frac{400}{9(n+1)^8},\qquad
 \sum_{n\ge6}e_n^2
 <\frac{400}{9}\int_6^\infty t^{-8}\,\dd t
 =\frac{25}{1102248}<\frac1{44000}.
\]
Thus
\[
 \sup_{0\ne h\in Y_2}\frac{|\langle\varepsilon_0,h\rangle|^2}{A[h]}
 <\frac1{6000}+\frac1{24000}+\frac1{90000}+\frac1{44000}
 <\frac1{4096}.
\]
Taking square roots proves \eqref{4.15}.
\end{proof}

\section{Optimal dilation and a global potential bound}
\label{5.0}

A maximizing direction need not initially be close to $f_0$.
We first select its scale by a global overlap maximum.  Positivity then
converts the resulting inequalities into bounds on the mixed density
$f_0g$, without any assumption on $A[g-f_0]$.

\begin{lemma}\label{5.1}
Let $f\ge0$ be a maximizer of $\mathfrak b$ on $A[f]=1$.
There are $a,\lambda>0$ such that
\begin{equation*}
 g=af_\lambda=f_0+w,\qquad w\in Y_2,
 \qquad\mathcal B(g)=\beta_*.
\end{equation*}
Moreover,
\begin{equation}\label{5.2}
 \langle g,f_0^{(q)}\rangle\le\frac12\quad(q>0),\qquad
 \langle g,f_0\rangle=\frac12,\qquad
 f_0^{(q)}(r):=q^{-1/2}f_0(r/q).
\end{equation}
\end{lemma}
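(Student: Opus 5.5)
We choose the scale by maximizing the $L^2$ overlap with $f_0$ over all dilations, and then fix the amplitude by a normalization. Since $f_\lambda(r)=\lambda^{1/2}f(\lambda r)$ is unitary on $\mathscr H_+$, one has
\[
 \langle f_\lambda,f_0\rangle=\langle f,(f_0)_{1/\lambda}\rangle
 =\langle f,f_0^{(\lambda)}\rangle .
\]
So we study the function $\Phi(q):=\langle f,f_0^{(q)}\rangle$ for $q>0$.

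\textbf{Step 1: $\Phi$ attains its maximum.} Because $f\ge0$, $f\ne0$ and $f_0>0$, we have $\Phi(q)>0$ for every $q$. The map $q\mapsto f_0^{(q)}$ is strongly continuous in $L^2$, so $\Phi$ is continuous. Next we show $\Phi(q)\to0$ as $q\to0$ and as $q\to\infty$, i.e.\ $f_0^{(q)}\rightharpoonup0$ weakly in $\mathscr H_+$. Approximate $f$ in $L^2$ by $\psi\in C_c(\mathbb R_+)$, with support in $[\delta,R]$. Then $|\langle\psi,f_0^{(q)}\rangle|\le\|\psi\|_2\,\|f_0^{(q)}\mathbf 1_{[\delta,R]}\|_2$. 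This last factor equals $\|f_0\mathbf 1_{[\delta/q,R/q]}\|_2$, which tends to $0$ in both limits because $f_0\in L^2$. Uniform boundedness $\|f_0^{(q)}\|_2=\|f_0\|_2$ then handles the approximation error. Hence $\Phi$ has a global maximum at some $q_*>0$. This endpoint decay is the only point needing care; the rest is bookkeeping.

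\textbf{Step 2: normalization and the inequalities in (5.2).} Set $\lambda:=q_*$ and $a:=\bigl(2\langle f_\lambda,f_0\rangle\bigr)^{-1}>0$, and put $g:=af_\lambda$. Then $\langle g,f_0\rangle=\tfrac12=M(f_0)$. Moreover
\[
 \langle g,f_0^{(q)}\rangle=a\langle f,f_0^{(\lambda q)}\rangle
 =a\,\Phi(\lambda q)\le a\,\Phi(\lambda)=\tfrac12
\]
for every $q>0$, which is \eqref{5.2}. Dilations and scalar multiples preserve $X_A$, so $g\in X_A$. Since $f$ maximizes $\mathfrak b$ on $A[f]=1$, we have $\mathcal B(f)=\mathfrak b(f)=\beta_*$, and \eqref{3.2} gives $\mathcal B(g)=\mathcal B(f)=\beta_*$.

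\textbf{Step 3: orthogonality conditions.} Put $w:=g-f_0$. Then $\langle w,\phi_0\rangle=\sqrt2\bigl(\langle g,f_0\rangle-M(f_0)\bigr)=0$.

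For the second condition, use that $f_0$ is smooth with $r f_0'\in L^2$ (it is rational and decays like $r^{-3}$). Hence $q\mapsto f_0^{(q)}$ is differentiable in $L^2$ at $q=1$, with
\[
 \partial_q f_0^{(q)}\big|_{q=1}=-\Lambda f_0 .
\]
Interior maximality at $q=1$ therefore gives $\langle g,\Lambda f_0\rangle=0$. By \eqref{4.6}, $\Lambda f_0=\tfrac{\sqrt3}{2\sqrt2}\phi_1$, so $\langle g,\phi_1\rangle=0$. Since $f_0\perp\phi_1$, this yields $\langle w,\phi_1\rangle=0$.

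Thus $w\in X_A$ has vanishing $\phi_0,\phi_1$ coefficients. By the identification of $Y_N$ with $N=2$ in \cref{4.0}, together with $\operatorname{span}\{f_0,Df_0\}=\operatorname{span}\{\phi_0,\phi_1\}$, this means $w\in Y_2$, completing the proof.
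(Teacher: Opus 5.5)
Your proposal is correct and follows essentially the same route as the paper. You maximize the overlap $\langle f_\lambda,f_0\rangle=\langle f,f_0^{(\lambda)}\rangle$ over dilations, fix the amplitude by $\langle g,f_0\rangle=\tfrac12$, differentiate only through the test function at $q=1$ to get $\langle g,\Lambda f_0\rangle=0$, and use $\Lambda f_0=\sqrt{3/8}\,\phi_1$ to land in $Y_2$. The differences are cosmetic: you prove the endpoint decay by approximating $f$ with compactly supported functions rather than passing to logarithmic coordinates, and you cite $\operatorname{span}\{f_0,Df_0\}=\operatorname{span}\{\phi_0,\phi_1\}$ instead of writing out $Df_0=f_0+\phi_1/\sqrt6$.
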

\begin{proof}
Set $S(\lambda):=\langle f_\lambda,f_0\rangle$ for $\lambda>0$.
Since $f\ge0$ is nonzero and $f_0>0$ on $\mathbb R_+$,
Cauchy--Schwarz gives $0<S(\lambda)<\infty$.
We first show that $S$ attains its maximum at a finite positive scale.
The map
\[
 (\mathcal Uh)(t):=e^{t/2}h(e^t)
\]
is unitary from $L^2(\mathbb R_+)$ onto $L^2(\mathbb R)$, and
\[
 \mathcal U(f_\lambda)(t)=(\mathcal Uf)(t+\log\lambda).
\]
Writing $F=\mathcal Uf$ and $G=\mathcal Uf_0$, we have
\[
 S(e^s)=\langle F(\cdot+s),G\rangle_{L^2(\mathbb R)}.
\]
Strong continuity of translations, obtained first for smooth compactly
supported functions and then by $L^2$ approximation, proves continuity.
For completeness, let $F_R=\mathbf1_{[-R,R]}F$ and
$G_R=\mathbf1_{[-R,R]}G$.  If $|s|>2R$, their shifted supports are
disjoint, so
\[
 |S(e^s)|\le
 \|F-F_R\|_2\|G\|_2+\|F_R\|_2\|G-G_R\|_2.
\]
The right-hand side tends to zero as $R\to\infty$.  Thus $S(\lambda)$
tends to zero as $\lambda\downarrow0$ and as $\lambda\to\infty$.
Since $S(1)>0$, its maximum is attained on a compact subinterval of
$(0,\infty)$.  Choose one maximizing scale $\lambda$ and set
\[
 a:=\frac1{2S(\lambda)},\qquad g:=af_\lambda.
\]
The dilation identity $A[f_\lambda]=M(f)+\lambda T(f)$ shows that
$g\in X_A$, and $g\ge0$.

For every $q>0$, the change of variables $r=qs$ gives
\[
 \langle f_\lambda,f_0^{(q)}\rangle
 =\int_0^\infty (q\lambda)^{1/2}f(q\lambda s)f_0(s)\,\dd s
 =S(q\lambda).
\]
Consequently,
\[
 \langle g,f_0^{(q)}\rangle
 =\frac{S(q\lambda)}{2S(\lambda)}\le\frac12,
 \qquad \langle g,f_0\rangle=\frac12,
\]
which proves \eqref{5.2}.

We differentiate this last scalar pairing only through the test
function.  The explicit formula for $f_0$ gives
\[
 \Lambda f_0(r)
 =\left(\frac32-\frac{4r^2}{r^2+9/4}\right)f_0(r),\qquad
 \partial_q f_0^{(q)}=-q^{-1}(\Lambda f_0)^{(q)}.
\]
For $1/2\le q\le2$, the absolute value of this derivative is bounded
by $Cr/(1+r^2)^2$, an $L^2(\mathbb R_+)$ function.  Hence
$q\mapsto f_0^{(q)}$ is differentiable in $L^2$ at $q=1$, with
derivative $-\Lambda f_0$.  Since the pairing has its maximum there,
\[
 0=\left.\frac{\dd}{\dd q}\right|_{q=1}
        \langle g,f_0^{(q)}\rangle
   =-\langle g,\Lambda f_0\rangle.
\]
No differentiability of the dilation orbit of $f$ is required.

Since $f_0=\phi_0/\sqrt2$, \eqref{4.6} and \eqref{4.4} imply
\[
 \Lambda f_0=\sqrt{\frac38}\,\phi_1,
 \qquad Df_0=f_0+\frac1{\sqrt6}\phi_1.
\]
It follows that $\langle g,\phi_1\rangle=0$.  With $w=g-f_0$, the
normalization of $g$ and the orthogonality of $\phi_0,\phi_1$ give
\[
 \langle w,f_0\rangle=0,\qquad
 \langle w,Df_0\rangle
 =\langle w,f_0\rangle+\frac1{\sqrt6}\langle w,\phi_1\rangle=0.
\]
Thus $w\in Y_2$.  Finally, \eqref{3.3} gives
$\mathcal B(f)=\mathfrak b(f)=\beta_*$, and the invariance in
\eqref{3.2} yields $\mathcal B(g)=\beta_*$.
\end{proof}

\begin{lemma}\label{5.3}
Let $g\in X_A$, $g\ge0$, satisfy \eqref{5.2}.  Then
\begin{equation}\label{5.4}
 V[f_0g](r)\le\frac{17}{10}
        +\frac{12}{(1+4r^2/9)^2},\qquad r\ge0.
\end{equation}
Consequently,
\begin{equation}\label{5.5}
 \mathfrak C(f_0g,h^2)\le\frac{41}{20}A[h],\qquad h\in Y_2.
\end{equation}
\end{lemma}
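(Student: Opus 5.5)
The plan is to obtain \eqref{5.4} by duality. The only information on $g$ is $g\ge0$ together with the overlap inequalities \eqref{5.2}. So, for each fixed $r\ge0$, I will dominate the kernel $s\mapsto f_0(s)/\max\{r,s\}$ pointwise by a nonnegative superposition of dilates of $f_0$. Concretely, I aim to find a finite nonnegative measure $\mu_r$ on $(0,\infty)$ with
\[
 \frac{f_0(s)}{\max\{r,s\}}\le\int_{(0,\infty)} f_0^{(q)}(s)\,\dd\mu_r(q)\qquad(s>0).
\]
Multiplying by $4\pi g(s)\ge0$, integrating in $s$, and using \eqref{5.2} (via Tonelli) then gives
\[
 V[f_0g](r)\le4\pi\int\langle g,f_0^{(q)}\rangle\,\dd\mu_r(q)\le2\pi\,\mu_r\bigl((0,\infty)\bigr).
\]
It therefore suffices to arrange $2\pi\mu_r((0,\infty))\le\frac{17}{10}+12(1+4r^2/9)^{-2}$. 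No smallness of $w=g-f_0$ enters, which is the point of the lemma.

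For the construction I would use at most two atoms: one at $q=1$ and one at a scale $q$ comparable to $r$. Write $f_0(s)=c\,s/(s^2+9/4)^2$ and $f_0^{(q)}(s)=c\,q^{7/2}s/(s^2+9q^2/4)^2$. On $s\le r$ the target is $f_0(s)/r$, which is dominated by $r^{-1}f_0$. On $s\ge r$ the target is $c/(s^2+9/4)^2$. This decays like $s^{-4}$, while $f_0^{(q)}(s)\sim c\,q^{7/2}s^{-3}$, so a dilate with small $q$ dominates the tail once its coefficient is about $q^{-7/2}\times$ (value near $s\sim r$).

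I would split into regimes. For $r\ge r_1$ (some $r_1$ around $1$–$2$), the single atom $\mu_r=r^{-1}\delta_1$ already works because $\max\{r,s\}\ge r$, giving the bound $2\pi/r$; for large $r$ this is below $17/10$. For small and intermediate $r$, I would take $\mu_r=\alpha\delta_1+\gamma\delta_{q(r)}$. The inequality then reduces, after clearing denominators and substituting $x=2s/3$, to the nonnegativity of explicit polynomials in $s$ with parameter $r$. This is checked as in \cref{4.10}: by sign of derivatives, or by comparing coefficients after choosing $\alpha,\gamma,q$ as simple rational functions of $r$.

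The main obstacle is this elementary but fiddly verification at intermediate $r$, where neither pure atom suffices and the budget $\frac{17}{10}+12s(r)^2$ is fairly tight. Note that at $r=0$ the budget is $13.7$, whereas $V_{f_0}(0)\cdot$(normalization) suggests the true value is around $2\pi\cdot 2$. If two atoms are not enough, the fallback is a continuous family $\dd\mu_r=m_r(q)\,\dd q$ supported on $[\,\min(r,1),1]$, chosen so that the integral of dilates reproduces the profile $(s^2+9/4)^{-2}$ on $s\ge r$ up to a constant factor.

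Finally, \eqref{5.5} follows at once. Since $h^2\ge0$,
\[
 \mathfrak C(f_0g,h^2)=\int_0^\infty V[f_0g]\,h^2\,\dd r\le\frac{17}{10}\|h\|_2^2+12\int_0^\infty s(r)^2h(r)^2\,\dd r
\]
with $s(r)=(1+4r^2/9)^{-1}$, and \cref{4.13} bounds the right-hand side by $\frac{41}{20}A[h]$ for $h\in Y_2$. The identity $\mathfrak C(\rho,h^2)=\int V[\rho]h^2$ is first used for finite Jacobi sums and extended by the form-core property of \cref{4.2}.
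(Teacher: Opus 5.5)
Away from the origin your plan is what the paper does. In the variable $x=2r/3$, the paper compares $1/\max\{x,t\}$ with a single dilate: $q=4/9$ on $[1/10,1/5]$ and $q=3/5$ on $[1/5,3/5]$. For $x\ge 3/5$ it uses your atom $r^{-1}\delta_1$, which gives the bound $2\pi/r$. It then derives \eqref{5.5} from \cref{4.13} exactly as you do.

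The gap is at small $r$, where the budget $p(0)=\tfrac{17}{10}+12=13.7$ is tightest. Since $V[f_0g]$ is nonincreasing, you need a bound on $V[f_0g](0)$; the paper proves $V[f_0g](0)<269/20$. Your two-atom construction cannot work there. As $s\downarrow0$ one has $f_0(s)/s\to f_0'(0)>0$, whereas every finite combination of dilates vanishes at $s=0$. For small $r>0$, two atoms force a total mass that blows up as $r\to0$. At the origin the certificate must therefore be a continuous family charging arbitrarily small scales (the paper's weight is $q^{-1/2}(1+q^2)^{-2}$), and you do not construct one. Incidentally, $f_0^{(q)}(s)=c\,q^{5/2}s/(s^2+9q^2/4)^2$, not $q^{7/2}$.

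The more serious issue is that nonnegative certificates use only the inequalities $\langle g,f_0^{(q)}\rangle\le\frac12$, and with that information alone the origin estimate is nearly sharp. Take $g=\kappa s^{-1/2}\chi(s)$ with $\chi\in C_c^\infty(0,\infty)$, $0\le\chi\le1$, and $\kappa^{-1}=2\int_0^\infty s^{-1/2}f_0(s)\,\mathrm{d}s$. Because $\int_0^\infty s^{-1/2}f_0^{(q)}(s)\,\mathrm{d}s$ does not depend on $q$, every one of your constraints holds. Meanwhile $V[f_0g](0)\uparrow4\pi$ as $\chi\uparrow1$. Hence any admissible $\mu_0$ satisfies $2\pi\mu_0((0,\infty))\ge4\pi\approx12.57$.

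So your ``true value around $2\pi\cdot2$'' is the floor of your method, not an estimate of $V$; for $g=f_0$ one has $V_{f_0}(0)=64/9$. This leaves under ten percent of room against $13.7$. The natural choice, namely the weight behind \eqref{5.7} plus an atom at $q=1$, gives only about $15.7$.

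The paper closes the gap with information you discard. Since $q=1$ maximizes $\langle g,f_0^{(q)}\rangle$, one has $\langle g,\Lambda f_0\rangle=0$, that is, $\int\frac{1-x^2}{1+x^2}\,\mathrm{d}\mu=\frac14$ in \eqref{5.6}. This is an equality, so it may enter \eqref{5.8} with the coefficient $\frac{24}{25}$. Since
$\frac{1-x^2}{1+x^2}=\frac14-\frac12\,q\partial_q\bigl[q^{5/2}(1+x^2)^2(x^2+q^2)^{-2}\bigr]_{q=1}$,
it is a dipole at $q=1$ with negative weight on $q>1$, which no nonnegative superposition of dilates can reproduce. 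Combined with \eqref{5.7} and the Gram-matrix verification of \eqref{5.8}, it gives $V[f_0g](0)\le\pi(79+171\sqrt2)/75<269/20$.

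To complete your argument you must either use this first-order identity or exhibit and verify an explicit, nearly optimal nonnegative continuous certificate at the origin. The proposal does neither, and it is not evident that such a nonnegative certificate exists.
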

\begin{proof}
Put $x=2r/3$ and define
\[
 \dd\mu(x)=3f_0(3x/2)g(3x/2)\,\dd x.
\]
The assumptions $g\ge0$ and $\langle g,f_0\rangle=1/2$ imply
$\mu\ge0$ and $\int\dd\mu=1$. The explicit formula for $f_0$ gives
\[
 \frac{f_0^{(q)}(3x/2)}{f_0(3x/2)}
 =q^{5/2}\frac{(1+x^2)^2}{(x^2+q^2)^2}.
\]
Thus \eqref{5.2} becomes
\begin{equation}\label{5.6}
 \int_0^\infty q^{5/2}\frac{(1+x^2)^2}{(x^2+q^2)^2}\,\dd\mu(x)
 \le1\quad(q>0),\qquad
 \int_0^\infty\frac{1-x^2}{1+x^2}\,\dd\mu(x)=\frac14.
\end{equation}
For the second identity, the integral in the first inequality equals
$1$ at $q=1$, so its derivative vanishes there. Indeed,
\[
 \left.q\partial_q\left(q^{5/2}\frac{(1+x^2)^2}{(x^2+q^2)^2}\right)
 \right|_{q=1}
 =\frac52-\frac4{1+x^2}
 =\frac12-2\frac{1-x^2}{1+x^2}.
\]
Differentiation under the integral is valid because the integrand and
its first $q$-derivative are uniformly bounded in $x\ge0$ for
$1/2\le q\le2$.

We first bound the potential at the origin. For $a,b>0$, partial
fractions give
\[
 \int_0^\infty\frac{q^2}{(q^2+a^2)(q^2+b^2)}\,\dd q
 =\frac\pi{2(a+b)}.
\]
Differentiating once in each parameter and dividing by $4ab$ gives
\[
 \int_0^\infty\frac{q^2\,\dd q}{(q^2+x^2)^2(q^2+1)^2}
 =\frac\pi{4x(1+x)^3},\qquad x>0.
\]
The differentiations are dominated when $a,b$ range over compact
subsets of $(0,\infty)$. Also, with $y=q^2$,
\[
 \int_0^\infty\frac{q^{-1/2}}{(1+q^2)^2}\,\dd q
 =\frac12\frac{\Gamma(1/4)\Gamma(7/4)}{\Gamma(2)}
 =\frac{3\pi\sqrt2}{8}.
\]
Multiplying the first inequality in \eqref{5.6} by
$q^{-1/2}(1+q^2)^{-2}$ and using Tonelli's theorem therefore gives
\begin{equation}\label{5.7}
 \int_0^\infty\frac{(1+x^2)^2}{x(1+x)^3}\,\dd\mu(x)
 \le\frac{3\sqrt2}{2}.
\end{equation}
We shall combine this with the elementary inequality
\begin{equation}\label{5.8}
 \frac1x\le\frac{11}{20}
 +\frac{24}{25}\frac{1-x^2}{1+x^2}
 +\frac{57}{50}\frac{(1+x^2)^2}{x(1+x)^3},\qquad x>0.
\end{equation}
To verify it, multiply the difference of the two sides by the positive
quantity $100x(1+x^2)(1+x)^3$. The result is
\[
 F(x)=73x^6-223x^5+70x^4+12x^3+395x^2-149x+14.
\]
For $z=(1,x,x^2,x^3)^T$, direct multiplication gives
\[
 10F(x)=z^T\mathbf Gz,\qquad
 \mathbf G=\begin{pmatrix}
 140&-745&-54&207\\
 -745&4058&-147&-926\\
 -54&-147&2552&-1115\\
 207&-926&-1115&730
 \end{pmatrix}.
\]
Its leading principal minors are
\[
 140,\qquad13095,\qquad6732432,\qquad2435840.
\]
All are positive, so $\mathbf G$ is positive definite and
\eqref{5.8} follows. Integrating and using \eqref{5.6}--\eqref{5.7},
we obtain
\[
 \int_0^\infty\frac1x\,\dd\mu(x)
 \le\frac{11}{20}+\frac{24}{25}\frac14
       +\frac{57}{50}\frac{3\sqrt2}{2}
 =\frac{79+171\sqrt2}{100}.
\]
In particular the integral is finite, and
\begin{equation*}
 V[f_0g](0)
 =\frac{4\pi}{3}\int_0^\infty\frac1x\,\dd\mu(x)\le\frac{\pi(79+171\sqrt2)}{75}<\frac{269}{20}.
\end{equation*}

We next obtain bounds away from the origin. Write
\[
 \mathcal V(x):=V[f_0g](3x/2),\qquad
 p(x):=\frac{17}{10}+\frac{12}{(1+x^2)^2},\qquad
 F_q(t):=\frac{(t^2+q^2)^2}{t(1+t^2)^2}.
\]
For any $q>0$, positivity and \eqref{5.6} give
\begin{align*}
 \mathcal V(x)
 =\frac{4\pi}{3}\int_0^\infty\frac{\dd\mu(t)}{\max\{x,t\}}\le\frac{4\pi}{3}q^{-5/2}
 \sup_{t>0}\frac{(t^2+q^2)^2}{(1+t^2)^2\max\{x,t\}}.
\end{align*}
Indeed, multiply the integrand from \eqref{5.6} by the ratio displayed
in the supremum. For $0<q\le1$, the ratio
$(t^2+q^2)/(1+t^2)$ is nondecreasing, since its derivative is
$2t(1-q^2)/(1+t^2)^2$. The supremum over $0<t\le x$ is therefore
attained at $t=x$, and hence
\[
 \mathcal V(x)\le\frac{4\pi}{3}q^{-5/2}\sup_{t\ge x}F_q(t),
 \qquad 0<q\le1.
\]
We compare this estimate with $p$ on four intervals.

(I) For $0<x\le1/10$, the Newton kernel shows that $\mathcal V$ is
nonincreasing. Since $p$ is also nonincreasing,
\[
 \mathcal V(x)<\frac{269}{20}<p(1/10)\le p(x).
\]

(II) For $1/10\le x\le1/5$, choose $q=4/9$. Logarithmic differentiation gives
\[
 \frac{F_q'(t)}{F_q(t)}
 =\frac{-t^4+(3-5q^2)t^2-q^2}{t(t^2+q^2)(1+t^2)}.
\]
For $t\le1/5$, its numerator is bounded above by
\[
 \frac{163}{81\cdot25}-\frac{16}{81}=-\frac{79}{675}<0.
\]
Thus $F_{4/9}$ decreases on $[x,1/5]$. For $t\ge1/5$, we have
\[
 F_{4/9}(t)\le\frac{(t^2+1/5)^2}{t(1+t^2)^2}\le\frac25.
\]
To check the last inequality, the logarithmic derivative of the middle
expression has numerator $-t^4+2t^2-1/5$. Its positive critical points
are $t_\pm=\sqrt{1\pm2/\sqrt5}$; $t_-$ is a local minimum and $t_+$
is a local maximum. The square of its value at $t_+$ is
$1/10+\sqrt5/50<4/25$, its value at $1/5$ is $45/169<2/5$, and its
limit at infinity is zero. Since
\[
 \frac{4\pi}{3}(4/9)^{-5/2}=\frac{81\pi}{8}<\frac{891}{28},
\]
we obtain
\[
 \mathcal V(x)\le
 \max\left\{\frac{891}{28}
 \frac{(x^2+16/81)^2}{x(1+x^2)^2},\frac{891}{70}\right\}.
\]
The constant is less than $p(x)$ because
$p(1/5)-891/70=391/5915>0$. For the other term, put
\[
 J_1(x):=x\left[\frac{17}{10}(1+x^2)^2+12\right]
        -\frac{891}{28}(x^2+16/81)^2.
\]
Then
\[
 J_1(1/10)>0
\]
and, throughout this interval,
\begin{align*}
 J_1'(x)
 &=\frac{595x^4-8910x^3+714x^2-1760x+959}{70}\\
 &\ge\frac{959-8910/125-1760/5}{70}>0.
\end{align*}
Thus $J_1>0$, as required.

(III) For $1/5\le x\le3/5$, choose $q=3/5$. The numerator of
$F_q'/F_q$ is $-(t^2-3/5)^2$, so $F_q$ is nonincreasing and
\[
 \mathcal V(x)\le\frac{100\pi\sqrt{15}}{81}
 \frac{(x^2+9/25)^2}{x(1+x^2)^2}
 <\frac{76}{5}\frac{(x^2+9/25)^2}{x(1+x^2)^2}.
\]
Define
\[
 J_2(x):=x\left[\frac{17}{10}(1+x^2)^2+12\right]
        -\frac{76}{5}(x^2+9/25)^2.
\]
Its endpoint values are positive, i.e. $J_2(1/5)>0$, 
 $J_2(3/5)>0.$
Moreover,
\begin{align*}
 J_2''(x)
 &=34x^3-\frac{912}{5}x^2+\frac{102}{5}x-\frac{2736}{125}\\
 &\le34(3/5)^3+\frac{102}{5}\frac35-\frac{2736}{125}
 =-\frac{288}{125}<0.
\end{align*}
Concavity places $J_2$ above the chord joining its positive endpoint
values. This proves $\mathcal V\le p$ on the interval.

(IV) For $x\ge3/5$, the probability normalization gives
\[
 \mathcal V(x)\le\frac{4\pi}{3x}<\frac{88}{21x}.
\]
Set $\zeta(x):=xp(x)$. Direct differentiation gives
\[
 \zeta'(x)=\frac{17}{10}+\frac{12(1-3x^2)}{(1+x^2)^3},\qquad
 \zeta''(x)=\frac{144x(x^2-1)}{(1+x^2)^4}.
\]
The values
\[
 \zeta(3/5)=\frac{70989}{14450},\quad
 \zeta(1)=\frac{47}{10},\quad
 \zeta(3/2)=\frac{14379}{3380},\quad
 \zeta(2)=\frac{109}{25}
\]
all exceed $21/5$. On $[3/5,1]$, concavity gives the same lower bound.
On $[1,3/2]$, the derivative is increasing but remains negative because
$\zeta'(3/2)<0$, so $\zeta(x)\ge\zeta(3/2)>21/5$.
On $[2,\infty)$ it is positive because
$\zeta'(2)>0$ and $\zeta''>0$.

It remains to treat $[3/2,2]$. On this interval,
\[
 \zeta''(x)\ge\frac{144(3/2)(5/4)}{5^4}
 =\frac{54}{125}>\frac25,
\]
while
\[
 \zeta(5/3)=\frac{7343}{1734}>\frac{21}{5},\qquad
 |\zeta'(5/3)|=\frac{3331}{49130}<\frac7{100}.
\]
Taylor's formula, with $y=x-5/3$, gives
\[
 \zeta(x)\ge\zeta(5/3)+\zeta'(5/3)y+\frac15y^2
 \ge\zeta(5/3)-\frac54|\zeta'(5/3)|^2
 >\frac{88}{21}.
\]
Thus
$\mathcal V(x)\le p(x)$ for all $x>0$.

Since $\int t^{-1}\,\dd\mu(t)<\infty$ and
$1/\max\{x,t\}\le1/t$, dominated convergence gives continuity at
$x=0$, so \eqref{5.4} holds there as well. Multiplying \eqref{5.4}
by $h(r)^2$ and integrating, we conclude from \cref{4.13} that
\[
 \mathfrak C(f_0g,h^2)
 \le\frac{17}{10}\|h\|_2^2
      +12\int_0^\infty\frac{h(r)^2}{(1+4r^2/9)^2}\,\dd r
 \le\frac{41}{20}A[h],\qquad h\in Y_2.
\]
This proves \eqref{5.5}.
\end{proof}

\section{Localization, strict concavity, and uniqueness}
\label{6.0}

In this section, we prove \cref{1.2} by first localizing all maximizers and then
establishing strict concavity. By \cref{5.1}, every nonnegative maximizer
can be written as $g=f_0+w$, $w\in Y_2$, after an amplitude change and a
dilation. We work with $\widetilde\Psi(w)=\mathcal B(f_0+w)$ from
\cref{3.0}.
We shall also use a cubic estimate valid on all of $Y_2$.
Cauchy--Schwarz for $\mathfrak C$, \cref{4.12}, and \eqref{3.6} give
\begin{equation}\label{6.1}
 \bigl|\mathfrak C(f_0h,h^2)\bigr|^2
 \le K_{f_0}[h]\mathfrak b(h)
 \le\frac58A[h]^3<\frac{16}{25}A[h]^3
 \qquad(h\ne0,\ h\in Y_2).
\end{equation}

\subsection{Global localization}
\begin{lemma}\label{6.2}
Let $g=f_0+w$ be a representative furnished by \cref{5.1},
and put
\[
 \rho=A[w]^{1/2},\quad \ell=\langle\varepsilon_0,w\rangle,\quad
 \delta=\beta_*-\beta_0\ge0,\quad
 H=H_Z[w],\quad c=\mathfrak C(f_0w,w^2).
\]
Then
\begin{equation}\label{6.3}
 c=H+\delta(1+\rho^2)-3\ell,
\end{equation}
and
\begin{equation}\label{6.4}
 4\beta_*M(w)T(w)-\mathfrak b(w)
 =2H+\delta(3+2\rho^2)-8\ell.
\end{equation}
\end{lemma}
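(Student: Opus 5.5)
Both identities come from the polynomial $P(t):=\mathfrak b(f_0+tw)-\beta_*\bigl(1+2t^2A[w]+4t^4M(w)T(w)\bigr)$. By \cref{5.1}, $\mathcal B(g)=\beta_*$, and \eqref{3.2} gives $\sup\mathcal B=\beta_*$. Lemma \ref{3.8} shows that for every $t$ the function $f_0+tw$ is nonzero and has $\mathcal B(f_0+tw)=\mathfrak b(f_0+tw)/(1+2t^2A[w]+4t^4M(w)T(w))$. Since this denominator is positive, $P(t)\le0$ for all $t$, and $P(1)=0$. Hence $t=1$ is an interior maximum of the real polynomial $P$, which gives $P(1)=0$ and $P'(1)=0$. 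No smallness of $w$ is needed.

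Next we write out the coefficients using \eqref{3.10}. Note that $2t^2(\beta_0A[w]-H_Z[w])-2t^2\beta_*A[w]=-2t^2(\delta\rho^2+H)$. Therefore
\[
 P(t)=-\delta+4t\ell-2t^2(H+\delta\rho^2)+4t^3c+t^4\bigl(\mathfrak b(w)-4\beta_*M(w)T(w)\bigr).
\]
Write $Q:=4\beta_*M(w)T(w)-\mathfrak b(w)$. The two conditions read
\[
 -\delta+4\ell-2(H+\delta\rho^2)+4c-Q=0,\qquad
 4\ell-4(H+\delta\rho^2)+12c-4Q=0.
\]

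To eliminate the quartic coefficient, we subtract one quarter of the derivative identity from the value identity, as in the introduction. This gives $-\delta+3\ell-H-\delta\rho^2+c=0$, i.e. $c=H+\delta(1+\rho^2)-3\ell$, which is \eqref{6.3}. Substituting this $c$ back into the value identity gives
\[
 Q=-\delta+4\ell-2H-2\delta\rho^2+4H+4\delta(1+\rho^2)-12\ell=2H+\delta(3+2\rho^2)-8\ell,
\]
which is \eqref{6.4}.

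The only delicate point is justifying $P\le0$ for every real $t$. This requires checking that $f_0+tw\ne0$, which holds because $M(f_0+tw)\ge1/2$, and that the denominator in \eqref{3.9} at parameter $t$ equals $4M(f_0+tw)T(f_0+tw)$. Both facts are contained in the proof of \cref{3.8}. Everything else is exact algebra, so we expect no real obstacle.
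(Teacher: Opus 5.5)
Your proposal is correct and is essentially the paper's argument. The paper tests the Euler equation $V_gg=2\beta_*(T(g)g+M(g)Dg)$ against $f_0=g-w$, which by quartic homogeneity of $\mathfrak b-4\beta_*MT$ is exactly your condition $4P(1)-P'(1)=0$; its introduction describes this elimination in your form, and it then obtains \eqref{6.4} from the value identity $P(1)=0$ together with \eqref{6.3}, just as you do. Your one-variable restriction has the minor advantage of needing only the derivative of a polynomial in $t$, rather than the Fr\'echet Euler equation in $X_A^*$.
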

\begin{proof}
Since $g=f_0+w$ is a maximizer of $\mathcal B$ on $X_A\setminus\{0\}$,
differentiating the quotient gives the equation
\[
 V_g g=2\beta_*\bigl(T(g)g+M(g)Dg\bigr)
 \qquad\text{in }X_A^*.
\]
Here the amplitude and dilation chosen in \cref{5.1} need not preserve
$A[g]=1$, so the equation is not in general $V_g g=\beta_*Ag$.
The conditions $w\in Y_2$ imply
\[
 \langle g,f_0\rangle=\langle Dg,f_0\rangle=\frac12,
 \qquad A[g]=1+\rho^2.
\]
Pairing the Euler equation with $f_0$ therefore yields
\[
 \mathfrak C(g^2,gf_0)=\beta_*(1+\rho^2).
\]
Using $A[f_0,w]=0$ and the definitions of $\ell,H,c$, we have
\[
 \mathfrak C(f_0^2,f_0w)=\ell,\qquad
 \mathfrak C(f_0^2,w^2)+2\mathfrak C(f_0w,f_0w)
 =\beta_0\rho^2-H.
\]
Bilinearity and symmetry now give
\begin{align*}
 \mathfrak C(g^2,gf_0)
 &=\mathfrak C(f_0^2+2f_0w+w^2,f_0^2+f_0w)\\
 &=\beta_0+3\ell+\beta_0\rho^2-H+c.
\end{align*}
Comparison proves \eqref{6.3}.  Testing with the fixed function $f_0$
leaves at least one test factor in each Hartree term, so no pure
quartic term in $w$ occurs in this identity.

The value $\mathcal B(g)=\beta_*$ and \eqref{3.10} also give
\[
 \beta_0+4\ell+2\beta_0\rho^2-2H+4c+\mathfrak b(w)
 =\beta_*\bigl(1+2\rho^2+4M(w)T(w)\bigr).
\]
Consequently,
\begin{align*}
 4\beta_*M(w)T(w)-\mathfrak b(w)
 &=-\delta+4\ell-2H-2\delta\rho^2+4c\\
 &=2H+\delta(3+2\rho^2)-8\ell,
\end{align*}
where the last step uses \eqref{6.3}.  This is \eqref{6.4}.
\end{proof}

We keep the sign of the cubic term instead of estimating it separately:
\[
 c-H=\mathfrak C(f_0g,w^2)-\beta_0A[w]+2K_{f_0}[w].
\]
The common term $\mathfrak C(f_0^2,w^2)$ cancels.  The global
mixed-potential bound therefore controls exactly the combination in
\eqref{6.3}.

\begin{lemma}\label{6.5}
Every representative in \cref{5.1} satisfies
\begin{equation}\label{6.6}
 A[w]^{1/2}<\frac1{20}.
\end{equation}
\end{lemma}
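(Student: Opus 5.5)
The plan is to combine the two exact identities of \cref{6.2} with the quadratic estimates of \cref{4.0} and the global mixed-potential bound \eqref{5.5}, and to avoid any smallness assumption on $w$. Throughout we write $\rho,\ell,\delta,H,c$ as in \cref{6.2}.

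\emph{Step 1: sign of $c-H$.} As noted after \cref{6.2},
\[
 c-H=\mathfrak C(f_0g,w^2)-\beta_0\rho^2+2K_{f_0}[w].
\]
Apply \eqref{5.5} with $h=w\in Y_2$ to the first term, $\beta_0=7/3$ to the second, and $K_{f_0}\preceq A/8$ from \cref{4.12} to the third. This gives
\[
 c-H\le\Bigl(\tfrac{41}{20}-\tfrac73+\tfrac14\Bigr)\rho^2=-\tfrac1{30}\rho^2 .
\]
On the other hand, \eqref{6.3} and \eqref{4.15} give
\[
 c-H=\delta(1+\rho^2)-3\ell\ge\delta(1+\rho^2)-\tfrac3{64}\rho .
\]
Comparing the two bounds yields
\[
 \delta(1+\rho^2)+\tfrac1{30}\rho^2\le\tfrac3{64}\rho .
\]
Hence $\rho\le 45/32$ a priori, which excludes large $w$ outright. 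It also gives the bound $\delta\le 3\rho/64$, so $\beta_*\le\beta_0+3/64$ is close to $7/3$.

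\emph{Step 2: sharpening via \eqref{6.4}.} Cauchy--Schwarz for $\mathfrak C$ together with $K_{f_0}[w]\le\rho^2/8$ gives
\[
 c^2\le K_{f_0}[w]\,\mathfrak b(w)\le\tfrac{\rho^2}{8}\,\mathfrak b(w),
 \qquad\text{so}\qquad \mathfrak b(w)\ge 8c^2/\rho^2 .
\]
Also $4M(w)T(w)\le A[w]^2=\rho^4$. Inserting both into \eqref{6.4}, and using $H\ge\frac{205}{252}\rho^2$ from \cref{4.12}, $|\ell|\le\rho/64$ and $\delta\ge0$, we obtain
\[
 \tfrac{205}{126}\rho^2-\tfrac18\rho+\frac{8c^2}{\rho^2}\le\beta_*\rho^4,
 \qquad \beta_*\le\tfrac73+\tfrac3{64}.
\]
We keep $c^2\ge0$, or use the lower bound $c\ge H-3\rho/64$ when that quantity is positive. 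Dividing by $\rho$ leaves a polynomial inequality in $\rho$, valid on the range $\rho\le45/32$ from Step~1. It forces $\rho$ either to lie below roughly $\frac18\cdot\frac{126}{205}\approx0.077$ or to lie above a threshold of order $1$. The cubic $\beta_*\rho^3$ is too small to matter near the lower root.

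\emph{Step 3: closing the gap to $1/20$.} The main obstacle is twofold: reaching the constant $1/20$ rather than about $0.08$, and excluding the intermediate branch $\rho\in[0.08,45/32]$. For the intermediate branch I would return to Step~1 and retain $\delta\ge0$ only at the end. For the sharper constant I would use the improved defect bound: in the proof of \cref{4.14} the dual norm squared is below $1/4096$ with room to spare, which gives $|\ell|\le\rho/64$ with a better effective constant. I would then combine linearly the two inequalities of Steps 1 and 2, both after division by $\rho$. Specifically, eliminate $\ell$ between \eqref{6.3} and \eqref{6.4}: subtract $8/3$ times \eqref{6.3}. Then use $c\le H-\rho^2/30$, $H\ge\frac{205}{252}\rho^2$, and $4\beta_*M(w)T(w)-\mathfrak b(w)\le\beta_*\rho^4-8c^2/\rho^2$. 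This should produce an inequality of the form $\alpha\rho^2\le\frac{1}{64}\rho+O(\rho^3)$ with $\alpha$ comfortably above $1/3$, and hence $\rho<1/20$ throughout the a~priori range $\rho\le45/32$. I expect this bookkeeping, namely choosing the right linear combination so that no branch of large $\rho$ survives, to be the delicate part. If it fails, the fallback is to use the full quartic maximality $\mathcal B(f_0+tw)\le\beta_*$ for all $t$ to bound $\mathfrak b(w)$ from above near $4\beta_*M(w)T(w)$.
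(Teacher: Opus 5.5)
Your Step 1 is exactly the paper's a priori bound $\rho\le45/32$, and your Step 2 inequality is correct. Once you actually use the lower bound $c\ge H-\frac3{64}\rho>0$, your Step 2 is also how the paper excludes $[1/8,8/5]$; carried out, it excludes roughly $0.07\le\rho\le45/32$. So the intermediate branch you worry about is already closed by your own argument.

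The genuine gap is the window $1/20\le\rho\lesssim0.07$, and your Step 3 cannot close it. The information you use consists of:
\begin{itemize}
\item \eqref{6.3} and \eqref{6.4};
\item $H\ge\frac{205}{252}\rho^2$, $|\ell|\le\rho/64$, $\delta\ge0$ and $c-H\le-\rho^2/30$;
\item $4M(w)T(w)\le\rho^4$ and the \emph{lower} bound $\mathfrak b(w)\ge8c^2/\rho^2$.
\end{itemize}
These are all simultaneously satisfiable at $\rho=1/20$. Take $\delta=0$, $H=\frac{205}{252}\rho^2$ and $\ell=H/3<\rho/64$, so that $c=H-3\ell=0$. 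Then \eqref{6.4} only demands $\mathfrak b(w)=4\beta_*M(w)T(w)+\frac23H$, which no lower bound on $\mathfrak b(w)$ can contradict. Hence no bookkeeping with these inequalities yields $\rho<1/20$.

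Your specific combination, \eqref{6.4} minus $\frac83$ times \eqref{6.3}, eliminates $\ell$ entirely. It therefore cannot produce $\alpha\rho^2\le\frac1{64}\rho+O(\rho^3)$. Moreover, $c\le H-\rho^2/30$ is an upper bound on $c$ exactly where a lower bound would be needed. The slack in \cref{4.14} is negligible: the paper's sum is about $2.42\times10^{-4}$ against $1/4096\approx2.44\times10^{-4}$. In any case, for $\rho\le\frac3{64}\cdot\frac{252}{205}\approx0.058$ your lower bound on $c$ is vacuous.

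The missing ingredient is an \emph{upper} bound on the quartic term. Your fallback gestures at this but never uses it. Since $\beta_*=\sup\mathcal B$, you have directly $\mathcal B(w)\le\beta_*$, i.e.\ $\mathfrak b(w)\le4\beta_*M(w)T(w)$; no family $f_0+tw$ is needed. Insert this into the value identity, namely \eqref{3.10} at $t=1$ equated with $\beta_*\bigl(1+2\rho^2+4M(w)T(w)\bigr)$. This gives $\delta(1+2\rho^2)+2H\le4\ell+4c$. Now combine it with the cubic bound \eqref{6.1}, $|c|\le\frac45\rho^3$, and with $H\ge\frac45\rho^2$ and $|\ell|\le\rho/64$. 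The result is $\frac45\rho\le\frac1{32}+\frac85\rho^2$, which is false on $[1/20,1/8]$; this is the paper's middle step. A cubic bound of this type also follows from your own tools once the upper bound is available: $c^2\le K_{f_0}[w]\,\mathfrak b(w)\le\frac{\beta_*}{8}\rho^6$. With this step inserted between your Steps 1 and 2, your argument becomes the paper's proof.
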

\begin{proof}
Assume $\rho>0$ and use the notation of \cref{6.2}.
By \eqref{5.5},
\[
 c=\mathfrak C(f_0g,w^2)-\mathfrak C(f_0^2,w^2)
 \le\frac{41}{20}\rho^2-\mathfrak C(f_0^2,w^2).
\]
Consequently,
\[
 c-H\le\left(\frac{41}{20}-\frac73\right)\rho^2+2K_{f_0}[w]
 \le-\frac1{30}\rho^2.
\]
By \eqref{6.3} and
\eqref{4.15},
\[
 \delta(1+\rho^2)+\frac1{30}\rho^2
 \le3\ell\le\frac3{64}\rho,
\]
and therefore
\begin{equation}\label{6.7}
 \rho\le\frac{45}{32}<\frac85.
\end{equation}

First suppose $1/20\le\rho\le1/8$.  Since
$\mathfrak b(w)\le4\beta_*M(w)T(w)$, the value identity in the preceding
proof yields
\[
 \delta(1+2\rho^2)+2H\le4\ell+4c.
\]
Using $H\ge4\rho^2/5$, $|\ell|\le\rho/64$, and
\eqref{6.1}, we find
\[
 \frac45\rho\le\frac1{32}+\frac85\rho^2.
\]
The concave polynomial
\[
 P(\rho)=\frac45\rho-\frac85\rho^2-\frac1{32}
\]
satisfies
\[
 P(1/20)=\frac{19}{4000}>0,\qquad
 P(1/8)=\frac7{160}>0.
\]
It is positive throughout this interval, a contradiction.

Next suppose $1/8\le\rho\le8/5$.  Equation
\eqref{6.3} gives
\[
 c\ge\frac45\rho^2-\frac3{64}\rho>0.
\]
Since $c^2\le K_{f_0}[w]\mathfrak b(w)$ and
$K_{f_0}[w]\le\rho^2/8$, we have $\mathfrak b(w)\ge8c^2/\rho^2$.
Substituting in \eqref{6.4}, and using
$4M(w)T(w)\le\rho^4$, yields
\begin{align*}
 \frac73\rho^4\ge{}&\frac85\rho^2-\frac18\rho
 +8\left[\frac45\rho-\frac3{64}
              +\delta\left(\rho+\rho^{-1}\right)\right]^2\\
 &+\delta(1+\rho^2)(3-\rho^2).
\end{align*}
Here $\rho<\sqrt3$, so every contribution involving $\delta$ can be
omitted.  Hence
\[
 \frac73\rho^4\ge\frac{168}{25}\rho^2
                    -\frac{29}{40}\rho+\frac9{512}.
\]
On the other hand, $G(\rho)=7\rho^2/3+29/(40\rho)$ is convex and
\[
 G(1/8)=\frac{5603}{960},\qquad
 G(8/5)=\frac{30847}{4800}<\frac{168}{25}.
\]
Thus $G(\rho)<168/25$ on $[1/8,8/5]$, contradicting the preceding
inequality divided by $\rho^2$.  Combining both exclusions with
\eqref{6.7} proves the lemma.
\end{proof}

\subsection{Uniform strict concavity}
\begin{lemma}\label{6.8}
For $w,v\in Y_2$ with $A[w]^{1/2}\le1/20$,
\begin{equation}\label{6.9}
 \mathrm D^2\widetilde\Psi(w)[v,v]\le-\frac74A[v].
\end{equation}
\end{lemma}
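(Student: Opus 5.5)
The plan is a direct second-variation computation for the quotient in \eqref{3.9}, treating $w$ as a perturbation of the origin and comparing with the value at $w=0$. Write $\widetilde\Psi=N/E$ with
\[
 N(w):=\mathfrak b(f_0+w),\qquad E(w):=1+2A[w]+4M(w)T(w),
\]
both polynomials on $Y_2$. For $v\in Y_2$ the quotient rule gives
\[
 \mathrm D^2\widetilde\Psi(w)[v,v]
 =\frac{N''[v,v]}{E}-\frac{2N'[v]\,E'[v]}{E^2}
  -\frac{N\,E''[v,v]}{E^2}+\frac{2N\,E'[v]^2}{E^3}.
\]
At $w=0$ we have $E=1$, $E'=0$ and $E''[v,v]=4A[v]$. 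Moreover $N''[v,v]=4\mathfrak C(f_0^2,v^2)+8K_{f_0}[v]=4(\beta_0A[v]-H_Z[v])$ and $N=\beta_0$. So the expression collapses to $-4H_Z[v]$, consistent with the remark after \cref{3.8}. By \cref{4.12} this is at most $-\frac{205}{63}A[v]\approx-3.25\,A[v]$. The strategy is therefore to show that, for $\rho:=A[w]^{1/2}\le1/20$, every deviation from this value costs less than the available margin of about $1.5\,A[v]$.

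\textbf{Bounding the deviations.} The plan is to estimate each ingredient using only \cref{3.4}, \eqref{6.1}, $K_{f_0}\preceq A/8$ on $Y_2$, and $\mathfrak b\le 5A^2$. With $g=f_0+w$:
\begin{enumerate}
 \item \emph{Second derivative of the numerator.} We have
 \[
  N''[v,v]=4\mathfrak C(g^2,v^2)+8\mathfrak C(gv,gv).
 \]
 Expanding $g^2=f_0^2+2f_0w+w^2$ and $gv=f_0v+wv$ leaves error terms such as $\mathfrak C(f_0w,v^2)$, $\mathfrak C(w^2,v^2)$, $\mathfrak C(f_0v,wv)$ and $\mathfrak C(wv,wv)$. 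Each is handled by the Cauchy--Schwarz inequality of \cref{3.4}, for instance
 \[
  \mathfrak C(f_0w,v^2)^2\le K_{f_0}[w]\,\mathfrak b(v)\le\tfrac58\rho^2A[v]^2 .
 \]
 For $\mathfrak C(wv,wv)$ one uses the four-linear Hartree bound, that is, Cauchy--Schwarz together with polarization of $\mathfrak b\le5A^2$. This gives a total deviation of size $O(\rho)\,A[v]$, with explicit constants.
 \item \emph{The numerator and its first derivative.} For $N(w)$ use \eqref{3.10} with $t=1$, \eqref{4.15} and \eqref{6.1}, giving $|N-\beta_0|\le C\rho$. For the first derivative,
 \[
  N'[v]=4\bigl\langle V_gg,v\bigr\rangle
  =4\langle\varepsilon_0,v\rangle+O(\rho)A[v]^{1/2},
 \]
 using again \eqref{4.15} and the same Cauchy--Schwarz bounds.
 \item \emph{The denominator and its derivatives.} For $E$, using $4M(w)T(w)\le\rho^4$ and $M,T\le A$, one gets
 \[
  |E'[v]|\le(4\rho+2\rho^3)A[v]^{1/2},\qquad
  E''[v,v]=4A[v]+O(\rho^2)A[v],\qquad
  1\le E\le1+3\rho^2 .
 \]
\end{enumerate}

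\textbf{Assembling the estimate.} Inserting these bounds, the term $N E''/E^2$ is $\beta_0\cdot4A[v]$ up to $O(\rho)$, and it cancels the $4\beta_0A[v]$ part of $N''$ as at the origin. The cross term $2N'E'/E^2$ is bounded by $8\bigl(\tfrac1{64}+C\rho\bigr)\cdot4\rho\,A[v]$. The last term $2NE'^2/E^3$ is bounded by $2\cdot5\cdot16\rho^2A[v]$. With $\rho\le1/20$, each error is a small explicit multiple of $A[v]$. For example, $\frac{\sqrt{5/8}}{20}\cdot16\approx0.63$ is the worst linear term, arising from $4\mathfrak C(2f_0w,v^2)$ and the $8\mathfrak C$ cross terms.

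\textbf{Main obstacle.} The hard step is the bookkeeping in the $N''$ deviation. The linear-in-$\rho$ terms $8\mathfrak C(f_0w,v^2)+16\mathfrak C(f_0v,wv)$ carry the largest constants, and I expect their crude Cauchy--Schwarz bound to use up most of the margin $\frac{205}{63}-\frac74\approx1.50$. If it is too lossy, the first thing I would try is to combine these two terms via the single inequality $|\mathfrak C(f_0w,v^2)|,\ |\mathfrak C(f_0v,wv)|\le(K_{f_0}[w]\,\mathfrak b(v))^{1/2}$ or $(K_{f_0}[v]\,\mathfrak C(wv,wv))^{1/2}$, choosing whichever is sharper. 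Failing that, I would use the sharper constant $\pi^2/2$ in place of $5$ from \eqref{3.6}.
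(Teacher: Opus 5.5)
\textbf{There is a genuine gap: the assembly step does not reach $-\frac74$.} The perturbative scheme is the right kind of argument, but bounding the four quotient-rule terms separately loses too much, and your own numbers show it. The available slack is $\frac{205}{63}-\frac74\approx1.50$, slightly less after dividing $H_Z[v]$ by $E$.

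Your estimates spend the following (with $A[v]=1$, $\rho=\frac1{20}$):
\begin{itemize}
\item $24\sqrt{5/8}\,\rho\approx0.95$ on the linear part of the $N''$ deviation;
\item $60\rho^2=0.15$ on its quadratic part;
\item at least about $0.15$ on the cross term;
\item $2\cdot5\cdot16\rho^2=0.4$ on the last term.
\end{itemize}
That is already about $1.65$, before counting the mismatch between $N''/E$ and $NE''/E^2$. The first two items total the same $\approx1.11$ the paper pays, so the $N''$ bookkeeping you flagged is not the real obstacle. Your fallbacks ($\pi^2/2$ instead of $5$, or the other Cauchy--Schwarz pairing) act only on the $N''$ terms and barely change the total.

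Sharpening the separate bounds does not rescue it either. Using $0\preceq\beta_0A-H_Z\preceq(\beta_0-\frac{205}{252})A$ to bound $N'$, and $N\le\beta_0+0.02$ for the last term, one still only gets about $\mathrm D^2\widetilde\Psi(w)[v,v]\le-1.72\,A[v]$.

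\textbf{Why the loss is structural.} Write $N=\beta_0E+p_0$. The part $\beta_0E$ contributes $\beta_0E''/E$, $-2\beta_0E'^2/E^2$, $-\beta_0E''/E$ and $+2\beta_0E'^2/E^2$ to the four quotient-rule terms, which sum to exactly zero. You use the first cancellation. But by bounding the cross term in absolute value and the last term from above, you pay $2\beta_0E'^2/E^2\approx2\beta_0(4\rho)^2\approx0.19$ in full, or $0.4$ with $N\le5$.

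\textbf{The missing step: subtract $\beta_0$ before differentiating.} By \eqref{3.10}, $\widetilde\Psi=\beta_0+d\,p_0$ with $d=E^{-1}$ and
\[
 p_0(w)=\mathfrak b(f_0+w)-\beta_0E(w)
 =4\langle\varepsilon_0,w\rangle-2H_Z[w]+4\mathfrak C(f_0w,w^2)
  +\bigl(\mathfrak b(w)-4\beta_0M(w)T(w)\bigr).
\]
Now $p_0$ is itself small. For $\rho\le\frac1{20}$ and $A[v]=1$, Banach's polarization bound for homogeneous polynomials gives:
\begin{itemize}
\item $|p_0(w)|<\frac1{64}$;
\item $|\mathrm Dp_0(w)[v]|<\frac9{16}$;
\item $\mathrm D^2p_0(w)[v,v]\le-4H_Z[v]+\frac{96}5\rho+60\rho^2$.
\end{itemize}
Here $|\mathfrak b(w)-4\beta_0M(w)T(w)|\le5A[w]^2$ handles both quartic pieces at once.

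The product rule $\mathrm D^2(dp_0)=d\,\mathrm D^2p_0+2\,\mathrm Dd\,\mathrm Dp_0+p_0\,\mathrm D^2d$ then costs only $2\cdot\frac{81}{400}\cdot\frac9{16}+\frac1{64}\cdot\frac{24}5\approx0.30$ on top of $-\frac{16}5+\frac{111}{100}$. This gives $-\frac{5719}{3200}<-\frac74$. The final margin is only about $0.04$, so this exact cancellation is essential.

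A minor slip: $|E'[v]|\le(4\rho+4\rho^3)A[v]^{1/2}$, not $4\rho+2\rho^3$.
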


\begin{proof}
Put
\[
 d(w):=\bigl(1+2A[w]+4M(w)T(w)\bigr)^{-1},
\]
\[
 q_3(w):=\mathfrak C(f_0w,w^2),\qquad
 q_4(w):=\mathfrak b(w)-4\beta_0M(w)T(w),
\]
and
\[
 p_0(w):=4\langle\varepsilon_0,w\rangle-2H_Z[w]+4q_3(w)+q_4(w).
\]
Then $\widetilde\Psi=\beta_0+dp_0$.  By
\eqref{3.6} and \eqref{6.1},
\[
 |q_3(w)|\le\frac45A[w]^{3/2},\qquad
 |q_4(w)|\le5A[w]^2,\qquad
 0\le4M(w)T(w)\le A[w]^2.
\]
Since $b(w)\ge0$ and
$0\le4\beta_0M(w)T(w)\le\beta_0A[w]^2$, we have
\[
-\beta_0A[w]^2\le q_4(w)\le5A[w]^2.
\]
As $\beta_0=7/3<5$, this gives $|q_4(w)|\le5A[w]^2$.
Banach's theorem for symmetric multilinear forms on real Hilbert spaces
\cite{Banach1938} identifies their multilinear norms with their diagonal
polynomial norms.  Apply it on $(Y_2,A)$, or first on the finite-dimensional
span of the arguments and then pass by continuity. In particular, if $P$ is a continuous $m$-homogeneous polynomial
on $(Y_2,A)$ and $|P(h)|\le C A[h]^{m/2}$, then
\[
|D^jP(w)[v,\ldots,v]|
\le \frac{m!}{(m-j)!}\,
C A[w]^{(m-j)/2}A[v]^{j/2},
\qquad j=1,2.
\]

Assume $A[v]=1$ and write $\rho:=A[w]^{1/2}\le1/20$.
The homogeneous polynomial bounds imply
\begin{align*}
 |p_0(w)|
 &\le\frac\rho{16}+\frac{14}{3}\rho^2
           +\frac{16}{5}\rho^3+5\rho^4<\frac1{64},\\
 |\mathrm Dp_0(w)[v]|
 &\le\frac1{16}+\frac{28}{3}\rho
           +\frac{48}{5}\rho^2+20\rho^3<\frac9{16},\\
 \mathrm D^2p_0(w)[v,v]
 &\le-4H_Z[v]+\frac{96}{5}\rho+60\rho^2
 \le-4H_Z[v]+\frac{111}{100}.
\end{align*}

For the denominator $d(w)^{-1}=1+2A[w]+4M(w)T(w)$,
\begin{align*}
 \mathrm D(d^{-1})(w)[v]
 &=4A[w,v]+8T(w)\langle w,v\rangle
              +8M(w)\langle D^{1/2}w,D^{1/2}v\rangle,\\
 \mathrm D^2(d^{-1})(w)[v,v]
 &=4A[v]+8T(w)M(v)+8M(w)T(v)\\
 &\quad+32\langle w,v\rangle\langle D^{1/2}w,D^{1/2}v\rangle.
\end{align*}
Since $0\le4MT\le A^2$, polarization on $(Y_2,A)$ gives
\[
 |\mathrm D(d^{-1})(w)[v]|\le4\rho+4\rho^3,\qquad
 |\mathrm D^2(d^{-1})(w)[v,v]|\le4+12\rho^2.
\]
The reciprocal rule is
\begin{align*}
 \mathrm Dd(w)[v]&=-d(w)^2\mathrm D(d^{-1})(w)[v],\\
 \mathrm D^2d(w)[v,v]
 &=2d(w)^3\bigl(\mathrm D(d^{-1})(w)[v]\bigr)^2
   -d(w)^2\mathrm D^2(d^{-1})(w)[v,v].
\end{align*}
Furthermore,
\begin{align*}
 (1+\rho^2)^{-2}&\le d(w)\le1,\\
 |\mathrm Dd(w)[v]|&\le4\rho(1+\rho^2)\le\frac{81}{400},\\
 |\mathrm D^2d(w)[v,v]|&\le4+12\rho^2+32\rho^2(1+\rho^2)^2\le\frac{24}{5}.
\end{align*}
Since
\[
 4d(w)H_Z[v]\ge\frac{4(205/252)}{(1+\rho^2)^2}>\frac{16}{5},
\]
the product rule gives
\begin{align*}
 \mathrm D^2\widetilde\Psi(w)[v,v]
 &=d(w)\mathrm D^2p_0(w)[v,v]
    +2\mathrm Dd(w)[v]\mathrm Dp_0(w)[v]
    +p_0(w)\mathrm D^2d(w)[v,v]\\
 &\le-\frac{16}{5}+\frac{111}{100}
      +2\frac{81}{400}\frac9{16}+\frac1{64}\frac{24}{5}\\
 &=-\frac{5719}{3200}<-\frac74.
\end{align*}
Homogeneity in $v$ proves \eqref{6.9}.
\end{proof}

\subsection{Uniqueness and return to the equation}
\label{6.10}
\begin{proposition}\label{6.11}
The problem $\sup_{A[f]=1}\mathfrak b(f)$ has a unique nonnegative
maximizer $f_*$.  Its real maximizing set is $\{f_*,-f_*\}$.
\end{proposition}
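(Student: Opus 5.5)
The plan is to combine the global localization of \cref{6.5} with the uniform strict concavity of \cref{6.8}. Existence of a nonnegative maximizer was recorded at the end of \cref{2.0}. Let $f,f'\ge0$ be two maximizers of $\mathfrak b$ on $A[\cdot]=1$. By \cref{5.1} there are amplitudes $a,a'>0$ and scales $\lambda,\lambda'>0$ with
\[
 g=af_\lambda=f_0+w,\qquad g'=a'f'_{\lambda'}=f_0+w',\qquad w,w'\in Y_2 .
\]
Both satisfy $\widetilde\Psi(w)=\widetilde\Psi(w')=\beta_*$. By \cref{6.5}, $w$ and $w'$ lie in the convex set $\mathcal O:=\{v\in Y_2:A[v]^{1/2}\le1/20\}$.

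\textbf{Step 1: $w=w'$.} Set $\psi(t):=\widetilde\Psi(w+t(w'-w))$ for $t\in[0,1]$. The segment stays in $\mathcal O$, and $f_0+v\ne0$ for every $v\in Y_2$ since $M(f_0+v)\ge\frac12$. Hence $\psi$ is $C^2$, being a quotient of polynomials with positive denominator. \Cref{6.8} gives
\[
 \psi''(t)\le-\tfrac74A[w'-w].
\]
By \eqref{3.2}, $\widetilde\Psi=\mathcal B(f_0+\cdot)\le\beta_*$ everywhere, while $\psi(0)=\psi(1)=\beta_*$. If $w\ne w'$, strict concavity gives $\psi(1/2)>\beta_*$, which is a contradiction. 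So $g=g'$.

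\textbf{Step 2: recover $f=f'$.} From $g$, we have $f=a^{-1}g_{1/\lambda}$, so $f$ lies in the family $\{bg_\mu:b>0,\ \mu>0\}$. Within this family,
\[
 M(bg_\mu)=b^2M(g),\qquad T(bg_\mu)=b^2\mu T(g).
\]
By \eqref{3.3}, every normalized maximizer satisfies $M=T=\frac12$. These two conditions determine $\mu=M(g)/T(g)$ and then $b>0$ uniquely. Hence $f=f'=:f_*$.

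\textbf{Step 3: the real maximizing set.} Let $f$ be any real maximizer. Then $\mathfrak b(|f|)=\mathfrak b(f)$. Moreover $T(|f|)\le T(f)$, because $D$ is the square root of the Dirichlet Laplacian. Its form is therefore a Dirichlet form with a nonnegative jump kernel $J(r,s)$, and
\[
 T(f)-T(|f|)=\iint J(r,s)\bigl(|f(r)||f(s)|-f(r)f(s)\bigr)\,\dd r\,\dd s\ \ge0.
\]
Hence $A[|f|]\le1$ and $\mathfrak b(|f|)/A[|f|]^2\ge\beta_*$. So $|f|$ is a nonnegative maximizer after normalization. Since $f$ already maximizes, equality forces $A[|f|]=1$, and then $|f|=f_*$ by Step 2. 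Equality in the display forces $f(r)f(s)\ge0$ for almost every pair, since the kernel of $D$ is strictly positive off the diagonal. Thus $f$ has a constant sign.

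One also needs that $f_*$ does not vanish on a set of positive measure; otherwise "constant sign" would not pin down $f$. Strict positivity of $f_*$ follows from the Euler equation of \cref{2.3}, $Af_*=\beta_*^{-1}V_{f_*}f_*$. Its right-hand side is nonnegative, and $A^{-1}$ is positivity improving. Hence $f=\pm f_*$.

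\textbf{Main obstacle.} I expect the main obstacle to be only the technical justification in Step 3: the sign argument for real maximizers, which needs the positive kernel of the form of $D$. The substantive uniqueness is already contained in \cref{6.5,6.8}. In Step 1, the only care needed is that \cref{6.8} applies on the whole segment, and this holds because $\mathcal O$ is convex.
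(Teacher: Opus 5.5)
Your proof is correct and follows the paper's route: you localize both maximizers via \cref{5.1} and \cref{6.5}, use the uniform strict concavity of \cref{6.8} on the convex ball to get $w=w'$, recover $f=f'$ from the balances \eqref{3.3}, and obtain the constant sign from the equality case of a diamagnetic inequality. The differences are cosmetic: the paper compares the vanishing first variations $\mathrm D\widetilde\Psi(w_j)=0$ rather than the values $\widetilde\Psi(w_j)=\beta_*$, and it cites the fractional diamagnetic equality case instead of giving your half-line jump-kernel argument; note also that your step on strict positivity of $f_*$ is unnecessary, because $f(r)f(s)\ge0$ a.e.\ already gives $f=\pm|f|=\pm f_*$.
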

\begin{proof}
Existence and the equality cases of rearrangement were recalled in
\cref{2.0}.  Let $f_1,f_2\ge0$ be two maximizers and choose
$g_j=f_0+w_j$ as in \cref{5.1}.  Then
\[
 A[w_j]^{1/2}<\frac1{20},\qquad
 \mathrm D\widetilde\Psi(w_j)=0\quad(j=1,2).
\]
For $v=w_1-w_2$, the segment $w_2+tv$ lies in the same convex ball, and
\begin{align*}
 0&=(\mathrm D\widetilde\Psi(w_1)-\mathrm D\widetilde\Psi(w_2))[v]\\
 &=\int_0^1\mathrm D^2\widetilde\Psi(w_2+tv)[v,v]\,\dd t
 \le-\frac74A[v].
\end{align*}
Thus $w_1=w_2$.  Consequently $f_1=a(f_2)_\lambda$ for some
$a,\lambda>0$.  The balances \eqref{3.3} give
\[
 \frac12=M(f_1)=\frac{a^2}{2},\qquad
 \frac12=T(f_1)=\frac{a^2\lambda}{2},
\]
so $a=\lambda=1$.  Every real maximizer has constant sign by the equality
case of the fractional diamagnetic inequality; hence the real maximizing
set consists of the asserted pair.
\end{proof}

Set
\begin{equation}\label{6.12}
 \mathcal M_*:=\{f\in X_A:A[f]=1,\ \mathfrak b(f)=\beta_*\}
 =\{f_*,-f_*\},\qquad f_*\ge0.
\end{equation}
By \cref{2.3},
\begin{equation*}
 \beta_*Af_*=V_{f_*}f_*.
\end{equation*}

The Hardy-Kato bound gives, for $f\in X_A$,
\[
 0\le V_f(r)\le V_f(0)
 =4\pi\int_0^\infty\frac{f(s)^2}{s}\,\dd s
 \le2\pi^2T(f).
\]
Thus $V_{f_*}f_*\in L^2$.  The sine transform of the weak Euler equation gives
\[
 \|(1+k)\mathcal F_s f_*\|_2
 =\beta_*^{-1}\|V_{f_*}f_*\|_2<\infty,
 \qquad f_*\in\mathcal D(A).
\]
On $L^2(\mathbb R_+)$,
\begin{align*}
 A^{-1}&=\int_0^\infty e^{-t}e^{-tD}\,\dd t,\\
 e^{-tD}&=\frac{t}{2\sqrt\pi}\int_0^\infty
 u^{-3/2}e^{-t^2/(4u)}e^{-uD^2}\,\dd u,\\
 (e^{-uD^2})(r,s)
 &=\frac1{\sqrt{4\pi u}}
   \left(e^{-(r-s)^2/(4u)}-e^{-(r+s)^2/(4u)}\right)>0
 \quad(r,s,u>0).
\end{align*}
Consequently,
\[
 f_* =\beta_*^{-1}A^{-1}(V_{f_*}f_*)>0
 \quad\text{almost everywhere on }\mathbb R_+.
\]

Define
\[
 F_*:=\beta_*^{-1/2}f_*,\qquad Q(x):=\frac{F_*(|x|)}{|x|}.
\]
Then $Q$ is the unique positive radial ground state and
\begin{equation*}
 \mathcal K(Q)=\mathcal H(Q)=\frac{4\pi}{\beta_*},\qquad
 \mathcal S(Q)=\frac\pi{\beta_*}.
\end{equation*}
Moreover,
\begin{equation}\label{6.13}
 \sup_{u\ne0}\frac{\mathcal H(u)}{\mathcal K(u)^2}
 =\frac{\beta_*}{4\pi}.
\end{equation}
For an arbitrary complex ground state, the equality cases in
\cite[Section~2.1]{FrankLenzmann2010} give
$u=e^{i\vartheta}|u|^*(\,\cdot-x_0)$, where $|u|^*$ is a positive radial
ground state.  Thus $u=e^{i\vartheta}Q(\,\cdot-x_0)$, proving
\cref{1.2}.

\section{Nondegeneracy of the ground state}\label{7.0}
The regularity and decay results of
\cite[Theorem~1.1, Lemmas~2.1-2.2 and 3.3]{FrankLenzmann2010} give
\[
 Q\in\bigcap_{s>0}H^s(\mathbb R^3),\qquad
 Q>0,\qquad Q'(r)<0\quad(r>0),
\]
\[
 |Q(x)|\le C(1+|x|)^{-4},\qquad
 |\nabla Q(x)|\le C(1+|x|)^{-5}.
\]
For the gradient bound one applies the radial version of Lemma~3.3 in
that reference.  Since $Q^2\in L^1\cap L^\infty$, $V=|x|^{-1}*Q^2$
is bounded and tends to zero.  The exchange operator has kernel
$2Q(x)Q(y)/|x-y|$, and
\[
 \iint\frac{4Q(x)^2Q(y)^2}{|x-y|^2}\,\dd x\,\dd y
 \le C\|Q\|_3^4<\infty.
\]

The exchange term is therefore Hilbert-Schmidt on $L^2$.
Multiplication by $V$ is compact from $H^s$ to $L^2$ for $s=1/2,1$.
Indeed, if $h_j\rightharpoonup0$ in $H^s$, local Rellich compactness and
$V(x)\to0$ yield
\begin{align*}
 \|Vh_j\|_2
 &\le\|V\|_\infty\|h_j\|_{L^2(\{|x|\le R\})}
       +\|V\|_{L^\infty(\{|x|>R\})}\|h_j\|_2,\\
 \limsup_{j\to\infty}\|Vh_j\|_2
 &\le C\|V\|_{L^\infty(\{|x|>R\})}\longrightarrow0
       \quad(R\to\infty).
\end{align*}
In particular, $M_V(\sqrt{-\Delta}+1)^{-1}$ is compact on $L^2$.
Bounded self-adjoint perturbation and Weyl's theorem give
\[
 \mathcal D(L_+)=\mathcal D(L_-)=H^1(\mathbb R^3),
\]
and

\begin{equation}\label{7.1}
 \sigma_{\rm ess}(L_+)=\sigma_{\rm ess}(L_-)=[1,\infty).
\end{equation}

\begin{lemma}\label{7.2}
One has $\frac32Q+x\cdot\nabla Q\in H^1(\mathbb R^3)$ and
\begin{equation}\label{7.3}
 L_+\left(\frac32Q+x\cdot\nabla Q\right)=-Q.
\end{equation}
Consequently, for $f_*\in X_A$ from \eqref{6.12} and its
representative $g=f_0+w$, both $\Lambda f_*$ and $\Lambda g$ belong to $X_A$.
Their dilation orbits are differentiable in $X_A$.
\end{lemma}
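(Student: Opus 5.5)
The proof has three parts: the regularity of $\Lambda_3Q:=\tfrac32Q+x\cdot\nabla Q$, the identity \eqref{7.3} obtained by differentiating the dilation family, and the transfer to the half-line.

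\textbf{Regularity.} Since $Q\in\bigcap_{s>0}H^s$, we have $Q\in H^1$. For $x\cdot\nabla Q$, it suffices to show $x\cdot\nabla Q\in H^1$, that is, $x\,\partial_jQ\in L^2$ and $\partial_k(x_j\partial_jQ)\in L^2$. The first follows from $|\nabla Q|\le C(1+|x|)^{-5}$. For the second, write
\[
\partial_k(x_j\partial_jQ)=\partial_kQ+x_j\partial_j\partial_kQ .
\]
To control $|x|\,|\nabla^2Q|$ in $L^2$, I would work on the Fourier side: $\widehat{x\cdot\nabla Q}=-3\widehat Q-\xi\cdot\nabla_\xi\widehat Q$. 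The needed weighted bound is $|\xi|\,\nabla_\xi\widehat Q\in L^2(\langle\xi\rangle\,\mathrm d\xi)$. This is equivalent to $|x|Q\in H^1$ together with $|x|Q\in H^{2}$-type control, which I would derive from the decay of $Q$ and $\nabla Q$ plus the equation. Concretely, commute $x$ with $(\sqrt{-\Delta}+1)$ in $Q=(\sqrt{-\Delta}+1)^{-1}(VQ)$; the commutator $[x,(\sqrt{-\Delta}+1)^{-1}]$ is a bounded operator of order $-2$. The term $x\,VQ$ is handled by the decay of $Q$ and the smoothness of $V$ (note $|\nabla V|\lesssim(1+|x|)^{-2}$).

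\textbf{Identity.} Set $Q_\lambda(x):=\lambda^{2}Q(\lambda x)$. Since $\sqrt{-\Delta}$ scales like $\lambda$ and $|x|^{-1}*|Q_\lambda|^2$ scales like $\lambda^{-2}\cdot\lambda^{4}\cdot\lambda^{-1}\cdot\lambda^{-3}\ldots$, I would instead use the mass-preserving dilation $u_\lambda=\lambda^{3/2}Q(\lambda\cdot)$. With this choice,
\[
\sqrt{-\Delta}\,u_\lambda=\lambda\bigl(\sqrt{-\Delta}\,Q\bigr)_\lambda,\qquad
\bigl(|x|^{-1}*u_\lambda^2\bigr)u_\lambda=\lambda\bigl((|x|^{-1}*Q^2)Q\bigr)_\lambda .
\]
Hence $u_\lambda$ solves
\[
\sqrt{-\Delta}\,u_\lambda+\lambda u_\lambda=(|x|^{-1}*u_\lambda^2)u_\lambda .
\]
Differentiating at $\lambda=1$ gives $L_+(\partial_\lambda u_\lambda|_{\lambda=1})=-Q$, because $\partial_\lambda(\lambda u_\lambda)=Q+\partial_\lambda u_\lambda$. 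Here $\partial_\lambda u_\lambda|_{\lambda=1}=\tfrac32Q+x\cdot\nabla Q$.

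To make the differentiation rigorous, I would show that $\lambda\mapsto u_\lambda$ is differentiable in $H^1$ at $\lambda=1$. The difference quotients converge in $H^1$ because the limit lies in $H^1$; this uses dilation continuity in $H^1$ and the identity $u_\lambda-Q=\int_1^\lambda\mu^{-1}(\Lambda_3Q)_\mu\,\mathrm d\mu$. Passing to the limit is justified since $L_+:H^1\to L^2$ is bounded and the Hartree terms are continuous from $H^1$ to $L^2$. Equivalently, \eqref{7.3} can be verified distributionally by pairing with test functions.

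\textbf{Consequence.} Under $\mathcal U_{\rm rad}$, $f_*=\beta_*^{1/2}rQ(r)$, and one computes $\Lambda f_*=\beta_*^{1/2}\,r\,(\Lambda_3Q)(r)$, using $rf'+\tfrac12f=r(\tfrac32Q+rQ')$. Since $\Lambda_3Q\in H^1_{\rm rad}\subset H^{1/2}_{\rm rad}$, \cref{2.1} gives $\Lambda f_*\in X_A$. The representative is $g=af_{*,\lambda}$, or alternatively $g=a'(f_*)_{\lambda'}$, and $\Lambda$ commutes with dilations up to the identity $\Lambda(f_\lambda)=(\Lambda f)_\lambda$. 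Hence $\Lambda g=a(\Lambda f_*)_\lambda\in X_A$. Differentiability of the orbits $\mu\mapsto(f_*)_\mu$ in $X_A$ follows as above from the integral formula $(f_*)_\mu-f_*=\int_1^\mu s^{-1}(\Lambda f_*)_s\,\mathrm ds$ and the continuity of dilations on $X_A$; the same holds for $g$.

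The main obstacle is the weighted regularity $x\cdot\nabla Q\in H^1$, in particular controlling $|x|\,\nabla^2Q\in L^2$. I would try first to commute $x$ through the resolvent in the fixed-point equation, then use the decay $|Q|\lesssim(1+|x|)^{-4}$ and $Q\in H^s$ for all $s$.
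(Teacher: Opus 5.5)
Your proposal is correct, but it gets $\tfrac32Q+x\cdot\nabla Q\in H^1$ by a different and heavier route than the paper. The identity and the half-line consequences essentially match the paper: the computation $\Lambda f_*=\beta_*^{1/2}r(\Lambda_3Q)(r)$, the identity $\Lambda(f_\lambda)=(\Lambda f)_\lambda$, and the Bochner-integral formula for the dilation orbit are all as there.

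\emph{Your route.} You prove the weighted regularity first, and only then differentiate the dilated equation in $H^1$.

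\emph{The paper's route.} The paper never proves this regularity in advance.
\begin{itemize}
\item It uses only $\Lambda_3Q\in L^2$, which follows from the pointwise decay.
\item It differentiates $(\sqrt{-\Delta}+\lambda)Q_\lambda=(|x|^{-1}*Q_\lambda^2)Q_\lambda$ at $\lambda=1$ in the sense of distributions. The bound $|Q_\lambda|+|\partial_\lambda Q_\lambda|\le C(1+|x|)^{-4}$ justifies passing the derivative through the Hartree integrals.
\item It then reads the regularity off the identity itself:
\[
(\sqrt{-\Delta}+1)\Lambda_3Q=V\Lambda_3Q+2Q\bigl(|x|^{-1}*(Q\Lambda_3Q)\bigr)-Q\in L^2 .
\]
This holds because $V$ and the Hilbert--Schmidt exchange operator are bounded on $L^2$. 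Hence $(1+|\xi|)\widehat{\Lambda_3Q}\in L^2$.
\end{itemize}
You already remarked that the identity can be checked distributionally. Combining that remark with this one-line bootstrap removes what you call the main obstacle.

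\emph{Completing your commutator argument.} Your route does work, but as written it is only a sketch. What you actually need is $x_jQ\in H^2$; the phrase ``$|x|Q\in H^1$ together with $H^2$-type control'' should be replaced by this. Also, $H^1$ corresponds to the Fourier weight $\langle\xi\rangle^2$, not $\langle\xi\rangle$. To fill it in:
\begin{itemize}
\item Note that $V\in W^{1,\infty}$, since $|\nabla V|\le |x|^{-2}*Q^2$ is bounded for $Q\in L^2\cap L^\infty$.
\item Note that $x_jQ\in L^2$ by the decay of $Q$.
\item Write
\[
x_jQ=(\sqrt{-\Delta}+1)^{-1}(Vx_jQ)+\bigl[x_j,(\sqrt{-\Delta}+1)^{-1}\bigr](VQ),
\]
where the commutator is the Fourier multiplier $-i\xi_j|\xi|^{-1}(1+|\xi|)^{-2}$, which maps $L^2$ to $H^2$.
\item One pass of this formula gives $x_jQ\in H^1$. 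A second pass, using $\nabla(Vx_jQ)=(\nabla V)x_jQ+V\nabla(x_jQ)\in L^2$, gives $x_jQ\in H^2$.
\item Then $x_j\partial_kQ=\partial_k(x_jQ)-\delta_{jk}Q\in H^1$.
\end{itemize}

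\emph{Trade-off.} Your ordering lets you differentiate in $\lambda$ within $H^1$, using continuous maps into $L^2$, rather than in distributions. The cost is a separate two-step weighted bootstrap, which the paper's ordering makes unnecessary.
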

\begin{proof}
The displayed decay estimates imply
$\frac32Q+x\cdot\nabla Q\in L^2$.  If
$Q_\lambda(x)=\lambda^{3/2}Q(\lambda x)$, then
\[
 (\sqrt{-\Delta}+\lambda)Q_\lambda
 =\bigl(|x|^{-1}*Q_\lambda^2\bigr)Q_\lambda.
\]
For $\lambda$ near $1$, the decay bounds imply
\[
 |Q_\lambda(x)|+|\partial_\lambda Q_\lambda(x)|
 \le C(1+|x|)^{-4}.
\]
They justify differentiation of the Hartree integrals.  Differentiation
in distributions at $\lambda=1$ gives \eqref{7.3}.  All the terms other than
$\sqrt{-\Delta}(\frac32Q+x\cdot\nabla Q)$ belong to $L^2$, since
$V$ and the exchange operator are bounded on $L^2$.  Fourier multiplication
then gives $H^1$ regularity.  Radial reduction and dilation give the
assertions for $f_*$ and $g$.  Finally, since $\Lambda f\in L^2$, the identity
\[
 f_\lambda-f=\int_1^\lambda\frac1t(\Lambda f)_t\,\dd t.
\]
holds in $L^2$.  When $\Lambda f\in X_A$, its right-hand side is a Bochner
integral in $X_A$.  Strong continuity of dilation then proves
$X_A$ differentiability.
\end{proof}

Uniqueness alone does not determine a Hessian kernel.
We first identify the two zero directions of the scale-invariant
quotient.  The fixed-frequency correction then removes dilation; the
sphere constraint removes amplitude.

\begin{lemma}\label{7.4}
For every real $k\in X_A$,
\begin{equation}\label{7.5}
\mathrm D^2\mathcal B(f_*)[k,k]\le 0,
\end{equation}
with equality if and only if
$k\in\operatorname{span}\{f_*,\Lambda f_*\}$.
\end{lemma}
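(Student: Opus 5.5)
The plan is to transfer the statement from $f_*$ to the representative $g=af_\lambda=f_0+w$ of \cref{5.1}, where \cref{6.8} supplies a uniform negative Hessian on $Y_2$, and to show that the two invariance directions $g$ and $\Lambda g$ complement $Y_2$.

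\emph{Step 1: transfer to $g$.} Since $\mathcal B(a h_\lambda)=\mathcal B(h)$ by \eqref{3.2}, and the map $k\mapsto a k_\lambda$ is a linear isomorphism of $X_A$, differentiating $s\mapsto\mathcal B(a(f_*+sk)_\lambda)$ twice at $s=0$ gives
\[
 \mathrm D^2\mathcal B(g)[ak_\lambda,ak_\lambda]=\mathrm D^2\mathcal B(f_*)[k,k].
\]
Dilation commutes with $\Lambda$, so $a(\Lambda f_*)_\lambda=\Lambda g$, and the span $\operatorname{span}\{f_*,\Lambda f_*\}$ is carried onto $\operatorname{span}\{g,\Lambda g\}$. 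It therefore suffices to prove the statement at $g$.

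\emph{Step 2: invariance directions lie in the kernel of the Hessian.} Homogeneity of degree zero gives $\mathrm D\mathcal B(h)[h]=0$ for every $h\ne0$. Differentiating this in a direction $v$ at $h=g$ and using $\mathrm D\mathcal B(g)=0$ (since $g$ is a maximizer) yields $\mathrm D^2\mathcal B(g)[g,v]=0$.

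Dilation invariance gives $\mathrm D\mathcal B(h)[\Lambda h]=0$ whenever $\Lambda h\in X_A$. Differentiating in $v$ gives
\[
 \mathrm D^2\mathcal B(g)[\Lambda g,v]+\mathrm D\mathcal B(g)[\Lambda v]=0,
\]
and the second term vanishes because $g$ is critical. This step is delicate because $\Lambda v$ need not lie in $X_A$. I would avoid it by differentiating instead the identity $\mathcal B(g_\mu+s v_\mu)=\mathcal B(g+sv)$ first in $s$ and then in $\mu$ at $\mu=1$. That requires only the $X_A$-differentiability of the dilation orbit of $g$, which \cref{7.2} provides. If needed, one can first restrict to $v$ in the form core of finite Jacobi sums, for which $\Lambda v\in X_A$, and then pass to the limit by continuity of $\mathrm D^2\mathcal B(g)$. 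I expect this regularity point to be the main technical obstacle.

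\emph{Step 3: transversality.} I would show that $X_A=\operatorname{span}\{g,\Lambda g\}\oplus Y_2$. Since $Y_2=\{\phi_0,\phi_1\}^{\perp}\cap X_A$, this reduces to invertibility of the $2\times2$ matrix of pairings of $g,\Lambda g$ with $\phi_0,\phi_1$. One has $\langle g,\phi_0\rangle=1/\sqrt2$ and $\langle g,\phi_1\rangle=0$. Moreover $\langle\Lambda g,\phi_1\rangle=\langle\Lambda f_0,\phi_1\rangle-\langle w,\Lambda\phi_1\rangle$, using that $\Lambda$ is skew-adjoint on $L^2(\mathbb R_+)$. By \eqref{4.6} this equals $\sqrt3/4-\sqrt2\langle w,\phi_2\rangle$. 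Then \eqref{6.6} together with $\|w\|_2\le A[w]^{1/2}<1/20$ keeps this away from zero, so the matrix is triangular with nonzero diagonal.

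\emph{Step 4: conclusion.} Write $k=\alpha g+\beta\Lambda g+v$ with $v\in Y_2$. By Step 2,
\[
 \mathrm D^2\mathcal B(g)[k,k]=\mathrm D^2\mathcal B(g)[v,v]=\mathrm D^2\widetilde\Psi(w)[v,v],
\]
since $\widetilde\Psi(w+v)=\mathcal B(g+v)$ for $v\in Y_2$. \Cref{6.8}, applicable because of \eqref{6.6}, bounds this by $-\tfrac74A[v]\le0$. Equality forces $v=0$, which is exactly $k\in\operatorname{span}\{g,\Lambda g\}$. The converse direction follows directly from Step 2. Transporting back through Step 1 proves \eqref{7.5} together with the equality characterization.
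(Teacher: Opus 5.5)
Your proposal is correct and follows the paper's proof essentially step for step. It uses the same transfer to the representative $g=f_0+w$ via the $\mathcal B$-preserving isomorphism $f\mapsto af_\lambda$, the same vanishing of $\mathrm D^2\mathcal B(g)[g,\cdot]$ and $\mathrm D^2\mathcal B(g)[\Lambda g,\cdot]$, the same transversality through pairings with $\phi_0,\phi_1$, and the same appeal to \cref{6.8} on $Y_2$. There are two harmless slips: $\langle\Lambda f_0,\phi_1\rangle=\sqrt{3/8}=\sqrt6/4$ rather than $\sqrt3/4$, which does not matter since either value exceeds $\sqrt2/20$; and your identity $\mathcal B(g_\mu+sv_\mu)=\mathcal B(g+sv)$ also moves $v$, so as written it needs differentiability of $\mu\mapsto v_\mu$. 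The clean route, as in the paper, is to differentiate $\mathrm D\mathcal B(g_\mu)[v]=0$ in $\mu$ with $v$ fixed, since every $g_\mu$ is a maximizer; this needs only the $X_A$-differentiability of the orbit of $g$ from \cref{7.2}.
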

\begin{proof}
Take the representative $g=f_0+w$ of $f_*$ from
\cref{5.1}.  By \cref{6.5},
$A[w]^{1/2}<1/20$.  Every amplitude or dilation of $g$ is also a maximizer, so its first
variation vanishes.  Differentiating these vanishing first variations,
using \cref{7.2}, gives
\[
 \mathrm D\mathcal B(g)=0,\qquad
 \mathrm D^2\mathcal B(g)[g,\cdot]=0,\qquad
 \mathrm D^2\mathcal B(g)[\Lambda g,\cdot]=0.
\]
The two symmetry directions are transverse to $Y_2$.  Indeed,
\[
 \langle g,\phi_0\rangle=\frac1{\sqrt2},\qquad
 \langle g,\phi_1\rangle=0,\qquad
 \langle\Lambda g,\phi_0\rangle=0,
\]
and, using \eqref{4.6},
\begin{align*}
 \langle\Lambda g,\phi_1\rangle
 &=\sqrt{\frac38}-\langle w,\Lambda\phi_1\rangle\\
 &\ge\sqrt{\frac38}-\frac{\sqrt{11}}{40}>0.
\end{align*}
Here $\Lambda$ is skew-adjoint in the $L^2$ pairing on its domain.
For $k\in X_A$, set
\[
 a=\sqrt2\langle k,\phi_0\rangle,\qquad
 b=\frac{\langle k,\phi_1\rangle}{\langle\Lambda g,\phi_1\rangle},
 \qquad k_0=k-ag-b\Lambda g.
\]
The preceding pairings give
$\langle k_0,\phi_0\rangle=\langle k_0,\phi_1\rangle=0$, so $k_0\in Y_2$.
Thus
\begin{align*}
 \mathrm D^2\mathcal B(g)[k,k]
 &=\mathrm D^2\mathcal B(g)[k_0,k_0]
 =\mathrm D^2\widetilde\Psi(w)[k_0,k_0]\le-\frac74A[k_0].
\end{align*}
Equality therefore holds exactly on $\operatorname{span}\{g,\Lambda g\}$.
The fixed bounded linear isomorphism $f\mapsto af_\lambda$ taking $f_* $
to $g$ preserves $\mathcal B$ and commutes with $\Lambda$.
Congruence of the Hessians proves \eqref{7.5}.
\end{proof}

\begin{lemma}\label{7.6}
Let $h\in H^{1/2}(\mathbb R^3;\mathbb R)$ satisfy
\begin{equation}\label{7.7}
 \langle(\sqrt{-\Delta}+1)Q,h\rangle=0.
\end{equation}
Then $\langle h,L_+h\rangle\ge0$.  If $h\ne0$ is radial, the inequality
is strict.  In particular,
\[
 \ker L_+\cap L^2_{\rm rad}(\mathbb R^3)=\{0\}.
\]
\end{lemma}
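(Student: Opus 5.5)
The plan is to read $\langle h,L_+h\rangle$ as minus the second variation of the ground-state quotient $\mathcal R(u):=\mathcal H(u)/\mathcal K(u)^2$ at $Q$ in a direction that kills both first variations. Then handle the radial strict inequality through the half-line Hessian bound of \cref{7.4}.

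\emph{Step 1 (the nonnegativity).} By \cref{1.2} and \eqref{6.13}, $Q$ maximizes $\mathcal R$ on real $H^{1/2}(\mathbb R^3)\setminus\{0\}$, and $\mathcal K(Q)=\mathcal H(Q)$. For real $h$ we have
\[
\mathcal H'(Q)h=4\langle VQ,h\rangle,\qquad \mathcal K'(Q)h=2\langle(\sqrt{-\Delta}+1)Q,h\rangle,
\]
\[
\mathcal H''(Q)[h,h]=4\langle Vh,h\rangle+8\,\mathcal C(Qh,Qh),\qquad \mathcal K''(Q)[h,h]=2\mathcal K(h),
\]
where $\mathcal C(\rho,\sigma)=\iint\rho(x)\sigma(y)|x-y|^{-1}\,\mathrm dx\,\mathrm dy$. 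The Euler equation $(\sqrt{-\Delta}+1)Q=VQ$ together with \eqref{7.7} makes both first derivatives vanish. The quotient rule, with $\mathcal H(Q)=\mathcal K(Q)$, then gives
\[
\mathrm D^2\mathcal R(Q)[h,h]=\mathcal K(Q)^{-2}\bigl(\mathcal H''(Q)[h,h]-2\mathcal K''(Q)[h,h]\bigr)=-4\mathcal K(Q)^{-2}\langle h,L_+h\rangle .
\]
Since $t\mapsto\mathcal R(Q+th)$ has a maximum at $t=0$, the second-order necessary condition gives $\langle h,L_+h\rangle\ge0$.

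\emph{Step 2 (strictness for radial $h$).} Transfer to the half-line with \cref{2.1}. Let $k:=f_h$ and note $f_Q$ is a multiple of $f_*$. Then $\mathcal R$ becomes a multiple of $\mathcal G(f):=\mathfrak b(f)/A[f]^2$, and \eqref{7.7} becomes $A[f_*,k]=0$. By \eqref{3.2}, the difference $\mathcal B-\mathcal G\ge0$ vanishes at $f_*$, so its Hessian there is $\succeq0$. Hence $\mathrm D^2\mathcal G(f_*)\preceq\mathrm D^2\mathcal B(f_*)\preceq0$.

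If $\langle h,L_+h\rangle=0$, both inequalities are equalities at $k$. By \cref{7.4}, the second one forces $k=af_*+b\Lambda f_*$. The pairing $A[f_*,\Lambda f_*]=\tfrac12\frac{\mathrm d}{\mathrm d\lambda}\big|_{\lambda=1}(M+\lambda T)(f_*)=\tfrac14$, together with $A[f_*]=1$, turns the constraint into $a+b/4=0$. Both $\mathcal B$ and $\mathcal G$ are invariant under amplitude, so $f_*$ lies in the kernel of the Hessian of $\mathcal B-\mathcal G$. That Hessian at $k$ therefore equals $b^2$ times its value at $\Lambda f_*$. Along the dilation orbit $\mathcal B$ is constant, while $\mathcal G(f_{*,\lambda})=\lambda\beta_*/(\tfrac12+\tfrac\lambda2)^2$ has second derivative $-\beta_*/2\ne0$ at $\lambda=1$. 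So the Hessian of the difference is strictly positive on $\Lambda f_*$. This forces $b=0$, hence $a=0$ and $k=0$, i.e.\ $h=0$. The differentiability along dilations needed here is supplied by \cref{7.2}.

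\emph{Step 3 (the kernel).} Compute $L_+Q=(\sqrt{-\Delta}+1-V)Q-2VQ=-2VQ$. If $h\in\ker L_+$ is radial, self-adjointness gives $0=\langle h,L_+Q\rangle=-2\langle VQ,h\rangle=-2\langle(\sqrt{-\Delta}+1)Q,h\rangle$, so \eqref{7.7} holds. But also $\langle h,L_+h\rangle=0$, so Step 2 forces $h=0$.

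The main obstacle I expect is Step 2: identifying the exact kernel of the comparison Hessian and checking that the single constraint $A[f_*,k]=0$ is transverse to the span of $f_*$ and $\Lambda f_*$. The computation $a+b/4=0$ together with the strictly concave dilation profile of $\mathcal G$ is what makes this work.
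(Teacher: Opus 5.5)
Your proposal is correct and is essentially the paper's proof. Nonnegativity comes from the second variation at the maximizer $Q$: you use the unconstrained quotient $\mathcal H/\mathcal K^2$, while the paper uses a curve on the sphere $\mathcal K=\mathcal K(Q)$. Radial strictness and the kernel statement go through \cref{7.4}, the relation $a+b/4=0$, and $L_+Q=-2(\sqrt{-\Delta}+1)Q$ exactly as in the paper.

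The only difference is how you compare your $\mathcal G=\mathfrak b/A[\cdot]^2$ with $\mathcal B$. You obtain $\mathrm D^2\mathcal G(f_*)\preceq\mathrm D^2\mathcal B(f_*)$ and the positivity of the defect on $\Lambda f_*$ abstractly: from $\mathcal G\le\mathcal B$ with equality at the critical point $f_*$, plus the explicit dilation profile $4\lambda\beta_*/(1+\lambda)^2$. This needs only the first-order differentiability of the dilation orbit given by \cref{7.2}. The paper instead computes the defect exactly as the rank-one form $8\beta_*\langle(D-\Id)f_*,k\rangle^2$, using the identity $\mathfrak b/A^2=\mathcal B\,[1-((T-M)/A)^2]$.
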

\begin{proof}
By \eqref{6.13}, $Q$ maximizes $\mathcal H$ on
$\mathcal K(u)=\mathcal K(Q)=\mathcal H(Q)$.  For $h$ satisfying
\eqref{7.7}, the curve
\[
 \gamma(t)=\left(\frac{\mathcal H(Q)}
 {\mathcal H(Q)+t^2\mathcal K(h)}\right)^{1/2}(Q+th)
\]
lies on this sphere.  Direct differentiation yields
\[
 0\ge\left.\frac{\dd^2}{\dd t^2}\right|_{t=0}\mathcal H(\gamma(t))
 =-4\langle h,L_+h\rangle.
\]

Suppose now that $h$ is radial and set $k(r)=rh(r)$.  Since
$rQ(r)=\beta_*^{-1/2}f_*(r)$, condition
\eqref{7.7} becomes $A[f_*,k]=0$.
The identity
\[
 \frac{\mathfrak b(f)}{A[f]^2}
 =\mathcal B(f)\left[1-
 \left(\frac{T(f)-M(f)}{A[f]}\right)^2\right]
\]
and $M(f_*)=T(f_*)=1/2$ give
\begin{equation}\label{7.8}
 \left.\frac{\mathrm d^2}{\mathrm dt^2}\right|_{t=0}
 \frac{\mathfrak b(f_*+tk)}{A[f_*+tk]^2}
 =\mathrm D^2\mathcal B(f_*)[k,k]
  -8\beta_*\langle(D-\Id)f_*,k\rangle^2.
\end{equation}
Both terms on the right are nonpositive.  If their sum vanishes,
\cref{7.4} implies
$k=af_*+b\Lambda f_*$.  Differentiating $M((f_*)_\lambda)$ and
$T((f_*)_\lambda)$ at $\lambda=1$ yields
\[
 \langle f_*,\Lambda f_*\rangle=0,\qquad
 \langle Df_*,\Lambda f_*\rangle=\frac14.
\]
Hence
\[
 \langle(D-\Id)f_*,k\rangle=\frac b4=0,\qquad
 A[f_*,k]=a+\frac b4=0,
\]
so $k=0$.  Thus the left-hand side of
\eqref{7.8} is strictly negative for every
nonzero $k\in f_*^{\perp_A}$.
The radial identities in \cref{2.1} give
\begin{align*}
 \left.\frac{\mathrm d^2}{\mathrm dt^2}\right|_{t=0}
 \frac{\mathfrak b(f_*+tk)}{A[f_*+tk]^2}
 &=4\mathfrak C(f_*^2,k^2)+8\mathfrak C(f_*k,f_*k)-4\beta_*A[k]\\
 &=-\frac{\beta_*}{\pi}\langle h,L_+h\rangle.
\end{align*}
This proves strict positivity.  Finally, $L_+Q=-2(\sqrt{-\Delta}+1)Q$.
For a radial zero mode, self-adjointness therefore gives
\eqref{7.7}, and the strict inequality forces
$h=0$.
\end{proof}

\begin{proof}[Proof of \cref{1.3}]
Differentiating the equation gives
\[
 L_+\partial_{x_j}Q=0,\qquad j=1,2,3.
\]
Let $\{Y_{\ell m}:-\ell\le m\le\ell,\ \ell=0,1,\ldots\}$ be an
orthonormal real spherical-harmonic basis.  For $\ell\ge0$, let $D_\ell$
be the nonnegative square root of the Friedrichs realization of
\[
 -\partial_r^2-\frac2r\partial_r+\frac{\ell(\ell+1)}{r^2}
 \quad\text{on }L^2(\mathbb R_+,r^2\dd r).
\]
The map $\varphi\mapsto r\varphi$ is unitary to $L^2(\mathbb R_+)$, and
\begin{align*}
 \langle\varphi,D_\ell^2\varphi\rangle
 &=\int_0^\infty\left(r^2|\varphi'|^2
                         +\ell(\ell+1)|\varphi|^2\right)\dd r\\
 &=\int_0^\infty\left(|(r\varphi)'|^2
                +\frac{\ell(\ell+1)}{r^2}|r\varphi|^2\right)\dd r.
\end{align*}
Here $H^1_0(\mathbb R_+)$ is the $H^1$ closure of
$C_c^\infty(\mathbb R_+)$.  The one-dimensional Hardy inequality
\[
 \int_0^\infty\frac{|u|^2}{r^2}\,\dd r
 \le4\int_0^\infty|u'|^2\,\dd r,\qquad u\in H^1_0(\mathbb R_+),
\]
shows that these second-order forms have the common domain
$\{\varphi:r\varphi\in H^1_0(\mathbb R_+)\}$ and equivalent form norms
for each fixed $\ell$.  Thus, in form sense,
\[
 D_\ell^2\succeq D_1^2\quad(\ell\ge1),\qquad
 D_\ell\succeq D_1
\]
by the L\"owner-Heinz inequality.
With $r_<:=\min\{r,s\}$ and $r_>:=\max\{r,s\}$, the radial-factor
operator in the $\ell$ sector is
\begin{equation}\label{7.9}
\begin{split}
 (L_{+,\ell}\varphi)(r)
 ={}&D_\ell\varphi(r)+(1-V(r))\varphi(r)\\
 &-\frac{8\pi}{2\ell+1}Q(r)
 \int_0^\infty\frac{r_<^\ell}{r_>^{\ell+1}}
 Q(s)\varphi(s)s^2\,\dd s.
\end{split}
\end{equation}
Every sector with $\ell\ge1$ is orthogonal to the radial function
$(\sqrt{-\Delta}+1)Q$.  Thus \cref{7.6} gives
\[
 L_{+,\ell}\succeq0,\qquad \ell\ge1.
\]
The same compact-perturbation argument as above gives
$\sigma_{\mathrm{ess}}(L_{+,\ell})=[1,\infty)$.
The heat semigroup $e^{-uD_\ell^2}$ is positivity improving: after the
unitary map $\varphi\mapsto r\varphi$, this is the Dirichlet
Schr\"odinger semigroup with the locally bounded nonnegative potential
$\ell(\ell+1)/r^2$ on the connected interval $(0,\infty)$.
Equivalently, its Feynman-Kac kernel is strictly positive on every
compact subinterval.  See also \cite[Section~7]{Lenzmann2009}.
Subordination gives
\[
 e^{-tD_\ell}=\frac{t}{2\sqrt\pi}\int_0^\infty
 u^{-3/2}e^{-t^2/(4u)}e^{-uD_\ell^2}\,\dd u.
\]
The two terms subtracted from $D_\ell+1$ in \eqref{7.9} are bounded and
positivity preserving.  The Dyson expansion therefore implies
\[
 e^{-tL_{+,\ell}}\varphi
 \ge e^{-t(D_\ell+1)}\varphi>0
 \quad(t>0,\ \varphi\ge0,\ \varphi\ne0).
\]
The Perron-Frobenius theorem now makes an isolated lowest eigenvalue
simple, with a strictly positive eigenfunction.
For $\ell=1$, the strictly positive function $-Q'$ is a zero mode; hence
\[
 \ker L_{+,1}=\operatorname{span}\{Q'\}.
\]

Suppose $\ell\ge2$ admits a zero mode.  It can be chosen strictly positive,
say $\varphi>0$.  The form ordering already proved gives
$\varphi\in\mathcal D(D_1^{1/2})$ and
$\langle\varphi,D_\ell\varphi\rangle\ge\langle\varphi,D_1\varphi\rangle$.
Therefore,
\begin{align*}
 0=\langle\varphi,L_{+,\ell}\varphi\rangle
 \ge{}&\langle\varphi,L_{+,1}\varphi\rangle\\
 &+8\pi\iint Q(r)\varphi(r)Q(s)\varphi(s)
 \left(\frac{r_<}{3r_>^2}
 -\frac{r_<^\ell}{(2\ell+1)r_>^{\ell+1}}\right)
 r^2s^2\,\dd r\,\dd s.
\end{align*}
The first term is nonnegative.  For $t=r_</r_>\in(0,1]$,
\[
 \frac t3-\frac{t^\ell}{2\ell+1}
 \ge\left(\frac13-\frac15\right)t>0.
\]
Thus the double integral is strictly positive, a contradiction.
The radial sector is trivial by \cref{7.6}; consequently,
\[
 \ker L_+=\operatorname{span}_{\mathbb R}
 \{\partial_{x_1}Q,\partial_{x_2}Q,\partial_{x_3}Q\}.
\]

Finally, $L_-Q=0$.  The free Poisson kernel is strictly positive and
$V\ge0$ is bounded; hence the same Dyson argument gives
\[
 e^{-tL_-}h\ge e^{-t(\sqrt{-\Delta}+1)}h>0
 \quad(t>0,\ h\ge0,\ h\ne0).
\]
 If its lowest
spectral value were negative, \eqref{7.1} would give
a strictly positive eigenfunction orthogonal to $Q>0$, which is impossible.
Thus $L_-\succeq0$, and simplicity of its lowest eigenvalue gives
\[
 \ker L_-=\operatorname{span}_{\mathbb R}\{Q\}.
\]
\end{proof}

\section{The Lieb-Yau conjecture near the critical mass}
\label{8.0}

By \cite{GuoZeng2017}, every centered positive minimizer with
\(N\to N_*\), after rescaling, converges to a critical optimizer.  Thus
\cref{1.4} follows from radial local uniqueness near \(Q\)
and strict monotonicity of the mass along the local branch.

\begin{lemma}\label{8.1}
For every \(u\in H^{1/2}(\mathbb R^3)\),
\begin{equation*}
 \mathcal H(u)\leq\frac{2}{N_*}M(u)T(u).
\end{equation*}
Equality holds for a nonzero \(u\) if and only if
\begin{equation}\label{8.2}
 u(x)=c\lambda^{3/2}Q\bigl(\lambda(x-x_0)\bigr),
 \qquad
 c\in\mathbb C\setminus\{0\},\quad \lambda>0,\quad x_0\in\mathbb R^3.
\end{equation}
Moreover,
\begin{equation}\label{8.3}
 M(Q)=T(Q)=N_*,\qquad \mathcal H(Q)=2N_*.
\end{equation}
\end{lemma}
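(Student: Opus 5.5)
The plan is to read off Lemma~\ref{8.1} from the definition of \(\mathfrak j\) and the classification in Theorem~\ref{1.2}. The inequality itself is immediate. By definition \(\mathfrak j=\inf_{u\neq0}M(u)T(u)/\mathcal H(u)\) and \(N_*=2\mathfrak j\), so \(\mathcal H(u)\le\mathfrak j^{-1}M(u)T(u)=\frac{2}{N_*}M(u)T(u)\) for every \(u\ne0\); the case \(u=0\) is trivial. The equality cases are therefore exactly the optimizers of \(\mathcal J\), and \(0<\mathfrak j<\infty\) was recalled in \cref{2.0}.

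For the characterization \eqref{8.2}, I will use the equivalence stated in \cref{2.0} between optimizers of \(\mathcal J\) and ground states. If \(u\ne0\) attains equality, choose \(\lambda\) so that \(T(u_\lambda)=M(u_\lambda)\). Then \(u_\lambda\) minimizes \(\mathcal K^2/\mathcal H\), because \(\inf_\lambda \mathcal K(u_\lambda)^2/\mathcal H(u_\lambda)=4\mathcal J(u)=4\mathfrak j\), and this value is the global infimum. Multiplying \(u_\lambda\) by the positive constant \(t\) with \(tu_\lambda\in\mathscr N\) gives a ground state. By Theorem~\ref{1.2}, \(tu_\lambda=e^{i\vartheta}Q(\cdot-x_0)\). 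Undoing the scaling and amplitude gives \eqref{8.2}, after relabeling \(\lambda\) and \(x_0\).

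For the converse, \(\mathcal J\) is invariant under \(u\mapsto cu_\lambda(\cdot-x_0)\), since \(M,T,\mathcal H\) scale as \(|c|^2,|c|^2\lambda,|c|^4\lambda\) and are translation invariant. So it suffices that \(Q\) is an optimizer. This holds because \(Q\) is a ground state, and its normalized direction maximizes \(\mathcal H/\mathcal K^2\), equivalently \(\mathfrak b\) on the \(A\)-sphere. By \eqref{3.3}, \(M(f_*)=T(f_*)\), so \(\mathcal B(f_*)=\mathfrak b(f_*)=\beta_*=\sup\mathcal B\). Through Lemma~\ref{2.1} (\(M(U)=4\pi M(f_U)\), \(T(U)=4\pi T(f_U)\), \(\mathcal H(U)=4\pi\mathfrak b(f_U)\)), this gives \(\mathcal J(U)=\pi/\mathcal B(f_U)\). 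Hence \(\mathfrak j=\pi/\beta_*\), and \(Q\) attains it.

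The normalization \eqref{8.3} comes from the explicit data of \cref{6.10}. There \(Q\) corresponds to \(F_*=\beta_*^{-1/2}f_*\) with \(M(f_*)=T(f_*)=\tfrac12\). Lemma~\ref{2.1} gives \(M(Q)=4\pi\beta_*^{-1}\cdot\tfrac12=2\pi/\beta_*\), and likewise \(T(Q)=2\pi/\beta_*\). Also \(\mathcal H(Q)=\mathcal K(Q)=4\pi/\beta_*\), as recorded in \cref{6.10}. Finally, \(N_*=2\mathfrak j=2\pi/\beta_*\), which yields \(M(Q)=T(Q)=N_*\) and \(\mathcal H(Q)=2N_*\).

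The only delicate point is bookkeeping. One must consistently track the \(4\pi\) factors and the scaling conventions (\(\lambda^{3/2}\) in three dimensions versus \(\lambda^{1/2}\) on the half-line) in the identity \(\mathfrak j=\pi/\beta_*\). One must also check that the classification of complex optimizers is used only after normalizing to \(\mathscr N\), so that Theorem~\ref{1.2} applies.
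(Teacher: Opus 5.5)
Your proof is correct, and for the equality cases \eqref{8.2} it follows the paper. The paper invokes the optimizer--ground state correspondence of Frank--Lenzmann (Section~2.1) together with \cref{1.2}. Your dilation to $T(u_\lambda)=M(u_\lambda)$, followed by rescaling into $\mathscr N$, is exactly that correspondence as recalled in \cref{2.0}.

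You differ from the paper on the inequality and on \eqref{8.3}, which the paper calls standard and cites from Lieb--Yau and Lenzmann--Lewin. You instead note that the inequality is a restatement of $N_*=2\mathfrak j$. You then compute \eqref{8.3} from the paper's own data: $rQ=\beta_*^{-1/2}f_*$, $M(f_*)=T(f_*)=\tfrac12$, and $\mathfrak j=\pi/\beta_*$. This gives a self-contained check that the normalization of $Q$ fixed in \cref{6.10} is exactly the one for which \eqref{8.3} holds, without relying on external conventions.

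The one step you state too quickly is $\mathfrak j=\pi/\beta_*$. Your formula $\mathcal J(U)=\pi/\mathcal B(f_U)$ holds only for real radial $U$, so you still need to reduce $\inf\mathcal J$ to that class. You can do this in either of two ways:
\begin{itemize}
\item Quote \eqref{6.13} together with the identity $\inf_{\lambda}\mathcal K(u_\lambda)^2/\mathcal H(u_\lambda)=4\mathcal J(u)$.
\item Argue more simply: optimizers of $\mathcal J$ exist, your forward direction shows each has the form $cQ_\lambda(\cdot-x_0)$, and invariance of $\mathcal J$ then gives $\mathfrak j=\mathcal J(Q)=\pi/\mathcal B(rQ)=\pi/\beta_*$.
\end{itemize}
The second route also settles the converse direction of \eqref{8.2} without a separate argument.
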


The sharp inequality, the existence of optimizers, and
\eqref{8.3} are standard; see
\cite[Appendix~A.2]{LiebYau1987} and
\cite[Appendix~A]{LenzmannLewin2011}.  The optimizer ground state
correspondence in \cite[Section~2.1]{FrankLenzmann2010}, together with
\cref{1.2}, gives
\eqref{8.2}.

For \(m>0\),
\begin{equation}\label{8.4}
 \mathcal E_m\bigl(m^{3/2}v(m\,\cdot)\bigr)
 =m\mathcal E_1(v),\qquad
 e_m(N)=me_1(N).
\end{equation}
Hence it suffices to consider $m=1$.  By
\cite[Theorem~1(iii)]{Lenzmann2009}, every positive minimizer can be
translated to a radial nonincreasing function centered at the origin.
Let $u_N$ be any such minimizer of $e_1(N)$ and let $\varrho_N$ be its
Lagrange multiplier, with the sign convention
\begin{equation*}
 \bigl(\sqrt{-\Delta+1}-1+\varrho_N\bigr)u_N
 =\bigl(|x|^{-1}*u_N^2\bigr)u_N.
\end{equation*}
For \(N\) sufficiently close to \(N_*\), set
\begin{equation*}
 \eta_N:=\varrho_N-1>0,\qquad
 \tau_N:=\eta_N^{-2},\qquad
 w_N(x):=\eta_N^{-3/2}u_N(x/\eta_N).
\end{equation*}
Then
\begin{equation*}
 \bigl(\sqrt{-\Delta+\tau_N}+1\bigr)w_N
 =\bigl(|x|^{-1}*w_N^2\bigr)w_N,
\end{equation*}
The concentration theorem
\cite[Theorem~1.3 and Proposition~1.4]{GuoZeng2017}, with interaction
coefficient identically $1$, and \cref{1.2} imply
\begin{equation}\label{8.5}
 \tau_N\to0,\qquad w_N\to Q\quad\text{strongly in }H^{1/2}(\mathbb R^3)
 \qquad(N\to N_*).
\end{equation}
We give the normalization and centering details.  Set
\[
 \lambda_N:=\frac{T(u_N)}{N_*}\to\infty,\qquad
 v_N(x):=\lambda_N^{-3/2}u_N(x/\lambda_N),\qquad
 M(v_N)=N,\quad T(v_N)=N_*.
\]
For any sequence $N_j\to N_*$ and any corresponding minimizers, the
quoted theorem gives, after extraction, translations $y_j$ such that
$v_{N_j}(\,\cdot+y_j)$ converges strongly in $H^{1/2}$ to a nonzero
critical optimizer.  In particular, for some $R>0$,
\[
 \liminf_{j\to\infty}\int_{|x-y_j|<R}|v_{N_j}(x)|^2\,\dd x>0.
\]
Radial monotonicity gives
\[
 \frac{4\pi r^3}{3}|v_{N_j}(r)|^2
 \le\int_{|x|<r}|v_{N_j}(x)|^2\,\dd x\le N_j.
\]
If $|y_j|\to\infty$, then
\[
 \int_{|x-y_j|<R}|v_{N_j}(x)|^2\,\dd x
 \le\frac{R^3N_j}{(|y_j|-R)^3}\longrightarrow0,
\]
a contradiction.  Thus $y_j$ is bounded.  Strong continuity of
translation yields convergence of the unshifted profiles, whose limit
is radial about the origin.  By \eqref{8.2}, it has the form
$a\lambda^{3/2}Q(\lambda\,\cdot)$, $a,\lambda>0$; the limits of $M$ and
$T$ give
\[
 a^2N_*=N_*,\qquad a^2\lambda N_*=N_*,
 \qquad a=\lambda=1.
\]
Therefore $v_{N_j}\to Q$.  The equation for $v_N$ is
\[
 \left(\sqrt{-\Delta+\lambda_N^{-2}}
          +\frac{\eta_N}{\lambda_N}\right)v_N
 =\bigl(|x|^{-1}*v_N^2\bigr)v_N.
\]
Pairing with $v_N$ and using
$0\le\sqrt{t^2+\lambda_N^{-2}}-t\le\lambda_N^{-1}$ gives
\[
 \frac{\eta_N}{\lambda_N}
 =\frac{\mathcal H(v_N)
 -\langle v_N,\sqrt{-\Delta+\lambda_N^{-2}}\,v_N\rangle}{N}
 \longrightarrow\frac{2N_*-N_*}{N_*}=1.
\]
In particular, $\eta_N>0$ for $N$ near $N_*$, and
\[
 w_N(x)=\left(\frac{\lambda_N}{\eta_N}\right)^{3/2}
 v_N\!\left(\frac{\lambda_N}{\eta_N}x\right)\longrightarrow Q.
\]
This proves \eqref{8.5}.  The argument applies to every sequence of
minimizers, not merely to a preselected family.

For \(\tau\geq0\), set
\[
 \mathscr A_\tau:=\sqrt{-\Delta+\tau}+1,
 \qquad
 \mathcal G(w):=\bigl(|x|^{-1}*w^2\bigr)w.
\]
The standard Hartree estimates \cite{Lenzmann2007} give
\[
 \mathcal G\in C^\infty\!\left(
 H^{1/2}_{\rm rad}(\mathbb R^3;\mathbb R),
 H^{-1/2}_{\rm rad}(\mathbb R^3;\mathbb R)\right).
\]
$\mathscr A_0:H^{1/2}_{\rm rad}\to H^{-1/2}_{\rm rad}$
is an isomorphism.
Moreover,
\[
 \mathcal G'(Q)h=Vh+2Q\bigl(|x|^{-1}*(Qh)\bigr).
\]
The first term is compact from $H^{1/2}$ to $L^2$ and the second is compact on $L^2$ by its
Hilbert-Schmidt kernel.  Consequently,
\[
 \mathscr A_0^{-1}\mathcal G'(Q):H^{1/2}_{\rm rad}\to H^{1/2}_{\rm rad}
 \quad\text{is compact},\qquad
 L_+=\mathscr A_0\bigl(\Id-\mathscr A_0^{-1}\mathcal G'(Q)\bigr).
\]
Thus $L_+:H^{1/2}_{\rm rad}\to H^{-1/2}_{\rm rad}$ is Fredholm of index
zero.  A weak radial zero mode satisfies
\[
 \mathscr A_0h=\mathcal G'(Q)h\in L^2
 \quad\Longrightarrow\quad h\in H^1_{\rm rad}
 \quad\Longrightarrow\quad h=0
\]
by \cref{7.6}.  The Fredholm alternative therefore gives
\begin{equation}\label{8.6}
 L_+=\mathscr A_0-\mathcal G'(Q):
 H^{1/2}_{\rm rad}(\mathbb R^3;\mathbb R)
 \xrightarrow{\ \sim\ }
 H^{-1/2}_{\rm rad}(\mathbb R^3;\mathbb R).
\end{equation}
Throughout this section, $L_+^{-1}$ denotes the inverse of this
radial restriction, not an inverse on the full nonradial space.

\begin{lemma}\label{8.7}
There exist \(\tau_0,r_0>0\) such that, for every
\(0\leq\tau\leq\tau_0\), the equation
\begin{equation}\label{8.8}
 \mathscr A_\tau w=\mathcal G(w)
\end{equation}
has exactly one solution
\(w_\tau\in H^{1/2}_{\rm rad}(\mathbb R^3;\mathbb R)\) with
\(\|w_\tau-Q\|_{H^{1/2}_{\rm rad}}<r_0\).  Moreover,
\[
 w_\tau\to Q\quad\text{in }H^{1/2}_{\rm rad}(\mathbb R^3;\mathbb R)
 \quad(\tau\to0),
\]
the map \(\tau\mapsto w_\tau\) is \(C^1\) on \((0,\tau_0]\), and,
after decreasing \(\tau_0\),
\begin{equation}\label{8.9}
 \frac{\dd}{\dd\tau}\|w_\tau\|_2^2<0,
 \qquad 0<\tau\leq\tau_0.
\end{equation}
\end{lemma}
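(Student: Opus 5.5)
The plan is to build the branch by a contraction argument that uses only continuity of $\tau\mapsto\mathscr A_\tau$ at $\tau=0$, and then to obtain \eqref{8.9} from a dilation identity that expresses $\frac{\dd}{\dd\tau}\|w_\tau\|_2^2$ through a sign-definite Fourier integral plus an $O(\tau)$ error.

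\emph{Existence and local uniqueness.} I will not apply the implicit function theorem in $\tau$ directly, because $\tau\mapsto\mathscr A_\tau$ is not differentiable at $\tau=0$. Its multiplier satisfies $0\le\sqrt{k^2+\tau}-k\le\sqrt\tau$. Hence
\[
 \|\mathscr A_\tau-\mathscr A_0\|_{H^{1/2}\to H^{-1/2}}\le\sqrt\tau ,
\]
so the map is continuous at $0$. For $\tau>0$ the derivative multiplier $(2\sqrt{k^2+\tau})^{-1}$ is bounded by $(2\sqrt\tau)^{-1}$, so the map is $C^1$ on $(0,\infty)$. I will use the radial isomorphism \eqref{8.6} to define
\[
 \Phi_\tau(w):=w-L_+^{-1}\bigl(\mathscr A_\tau w-\mathcal G(w)\bigr).
\]
Its fixed points are exactly the solutions of \eqref{8.8}. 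The derivative is
\[
 \mathrm D\Phi_\tau(w)=L_+^{-1}\bigl(\mathcal G'(w)-\mathcal G'(Q)-(\mathscr A_\tau-\mathscr A_0)\bigr),
\]
and by smoothness of $\mathcal G$ its norm is at most $C(\|w-Q\|_{H^{1/2}}+\sqrt\tau)$. Also $\|\Phi_\tau(Q)-Q\|\le C\sqrt\tau$. Choosing $r_0$ small and then $\tau_0$ small, $\Phi_\tau$ is a $\tfrac12$-contraction of the closed ball $\overline B(Q,r_0)$ into itself, uniformly in $\tau\in[0,\tau_0]$. This gives existence and uniqueness of $w_\tau$, and the fixed-point estimate gives $\|w_\tau-Q\|\le C\sqrt\tau\to0$. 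At $\tau=0$ the fixed point is $Q$.

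\emph{$C^1$ regularity and the derivative of the mass.} Set $L_\tau:=\mathscr A_\tau-\mathcal G'(w_\tau)$. It is a small perturbation of $L_+$ and therefore invertible on the radial spaces, with a uniformly bounded inverse. For $\tau>0$ the ordinary implicit function theorem then applies at $(\tau,w_\tau)$; by local uniqueness the resulting branch coincides with $w_\tau$, so $\tau\mapsto w_\tau$ is $C^1$. Writing $R_\tau:=(-\Delta+\tau)^{-1/2}$, we get
\[
 \partial_\tau w_\tau=-\tfrac12 L_\tau^{-1}R_\tau w_\tau .
\]
Since $w_\tau\in L^2$ and we are in three dimensions, $R_\tau w_\tau\in H^{-1/2}$ uniformly in $\tau$.

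\emph{Dilation identity.} Let $\Lambda=\tfrac32+x\cdot\nabla$ and $w_\lambda(x)=\lambda^{3/2}w_\tau(\lambda x)$. Then
\[
 \bigl(\sqrt{-\Delta+\lambda^2\tau}+\lambda\bigr)w_\lambda=\mathcal G(w_\lambda).
\]
The regularity of $\Lambda w_\tau$ is handled as in \cref{7.2}. Differentiating at $\lambda=1$ gives
\[
 L_\tau\Lambda w_\tau=-w_\tau-\tau R_\tau w_\tau .
\]
Using the self-adjointness of $L_\tau^{-1}$, this yields
\[
 \frac{\dd}{\dd\tau}\|w_\tau\|_2^2=-\langle L_\tau^{-1}w_\tau,R_\tau w_\tau\rangle
 =\langle\Lambda w_\tau,R_\tau w_\tau\rangle+\tau\langle R_\tau w_\tau,L_\tau^{-1}R_\tau w_\tau\rangle .
\]
The last term is $O(\tau)$ by the uniform bounds. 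For the first term, $\Lambda$ is skew-adjoint and acts in Fourier variables as $-(\tfrac32+k\cdot\nabla_k)$. Hence $\langle\Lambda w,m(|k|)w\rangle$ equals, up to a fixed constant, a factor $\pm\tfrac12$ times $\int (k\cdot\nabla m)|\widehat w|^2$. For $m=(k^2+\tau)^{-1/2}$ one has
\[
 k\cdot\nabla m=-k^2(k^2+\tau)^{-3/2}.
\]
This gives a sign-definite integral of magnitude $\tfrac12\int k^2(k^2+\tau)^{-3/2}|\widehat w_\tau|^2\,\dd k$. As $\tau\to0$ it converges to $\tfrac12\int|k|^{-1}|\widehat Q|^2>0$, because $w_\tau\to Q$ and $\widehat Q/|k|\in L^2$.

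Therefore, after decreasing $\tau_0$, the derivative has a fixed strict sign. The main obstacle is pinning down that sign exactly. This means tracking the signs in $\Lambda$'s Fourier representation and in the equation for $\partial_\tau w_\tau$. As a consistency check, the sign must be negative: physically $N\uparrow N_*$ as $\eta_N\to\infty$, i.e. as $\tau_N\to0$. A secondary obstacle is justifying the uniform bound on $\langle R_\tau w_\tau,L_\tau^{-1}R_\tau w_\tau\rangle$. For that I would use the low-frequency bound $\int|\widehat w_\tau|^2k^{-2}(1+k)^{-1}\,\dd k\le C$, which follows from Hardy's inequality in $\mathbb R^3$.
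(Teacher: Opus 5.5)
Your existence, local-uniqueness and $C^1$ steps are correct. Your single contraction on the fixed ball $\overline B(Q,r_0)$ even merges two steps the paper does separately: a contraction on a ball of radius $C_0\tau$, then a Neumann-series uniqueness argument.

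The gap is in \eqref{8.9}. As written, you only show that $\frac{\dd}{\dd\tau}\|w_\tau\|_2^2$ has a fixed sign for small $\tau$. The physical consistency check is not an argument, and it would make the lemma depend on the minimization problem it is meant to settle. The sign follows at once from the integration by parts you set up. Put $U_\lambda w:=\lambda^{3/2}w(\lambda\,\cdot)$. Then $U_\lambda^{-1}m(|\nabla|)U_\lambda=m(\lambda|\nabla|)$, and differentiating $\langle U_\lambda w,m(|\nabla|)U_\lambda w\rangle=\int m(\lambda|\xi|)|\widehat w|^2\,\dd\xi$ at $\lambda=1$ gives exactly
\[
 \langle\Lambda w,m(|\nabla|)w\rangle=\frac12\int_{\mathbb R^3}|\xi|\,m'(|\xi|)\,|\widehat w(\xi)|^2\,\dd\xi ,
\]
with a plus sign and no further constant. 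Your formula therefore reads
\[
 \frac{\dd}{\dd\tau}\|w_\tau\|_2^2
 =-\frac12\int_{\mathbb R^3}\frac{|\xi|^2}{(|\xi|^2+\tau)^{3/2}}\,|\widehat{w_\tau}(\xi)|^2\,\dd\xi
 +\tau\langle R_\tau w_\tau,L_\tau^{-1}R_\tau w_\tau\rangle .
\]
The first term tends to $-\frac12\int|\xi|^{-1}|\widehat Q|^2=-\frac12\langle Q,\frac1{2\pi^2}(|x|^{-2}*Q)\rangle$, which is exactly the paper's limit. Since the second term is $O(\tau)$, \eqref{8.9} follows.

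Two justifications also need repair.

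First, $R_\tau w_\tau$ is not bounded in $H^{-1/2}$ uniformly in $\tau$ just because $w_\tau\in L^2$ in three dimensions: $L^2$ does not control $\int_{|\xi|<1}|\xi|^{-2}|\widehat w|^2$. Hardy's inequality in $\xi$ would need $|x|w_\tau\in L^2$ uniformly, which you do not have. Split instead. The bound $\|R_\tau Q\|_{H^{-1/2}}\le\||\xi|^{-1}\widehat Q\|_2<\infty$ holds because $Q\in L^1\cap L^2$. Also $\|R_\tau(w_\tau-Q)\|_{H^{-1/2}}\le\tau^{-1/2}\|w_\tau-Q\|_{H^{1/2}}$, which your $C\sqrt\tau$ rate keeps bounded. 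So the remainder is $O(\tau)$.

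Second, \cref{7.2} rests on the Frank--Lenzmann decay bounds for $Q$, which the paper does not have for $w_\tau$. A clean fix is to run your contraction for $(\sqrt{-\Delta+\sigma}+\kappa)v=\mathcal G(v)$ with $(\sigma,\kappa)$ near $(0,1)$, and use the implicit function theorem for $\sigma>0$. Local uniqueness then gives $U_\lambda w_\tau=v_{\lambda^2\tau,\lambda}$ for $\lambda$ near $1$. Hence $\lambda\mapsto U_\lambda w_\tau$ is $C^1$ into $H^{1/2}_{\rm rad}$, $\Lambda w_\tau\in H^{1/2}_{\rm rad}$, and $L_\tau\Lambda w_\tau=-w_\tau-\tau R_\tau w_\tau$ follows from the chain rule.

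With these repairs your argument is a genuine alternative to the paper's. The paper never uses the dilation identity along the branch. It sharpens the rate to $\|w_\tau-Q\|_{H^{1/2}}\le C\tau$ via $\|(\mathscr A_\tau-\mathscr A_0)Q\|_{H^{-1/2}}\le C\tau$. From \eqref{8.13} it then gets $\partial_\tau w_\tau\to-\frac12L_+^{-1}[\frac1{2\pi^2}(|x|^{-2}*Q)]$ in $H^{1/2}_{\rm rad}$. It uses $L_+\Lambda Q=-Q$ only at $Q$, together with the scaling of $\langle Q,|x|^{-2}*Q\rangle$.

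The trade-off is this. The paper needs the sharper $O(\tau)$ rate but nothing about $w_\tau$ beyond $H^{1/2}$ convergence. Your route gives an exact formula at every $\tau>0$ with a manifestly negative main term and needs no convergence rate. In exchange, it needs the dilation identity for each $w_\tau$.
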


\begin{proof}
At $\tau=0$ the operator depends only $1/2$-H\"older continuously on
the parameter in operator norm.  We therefore construct the branch by
contraction for $\tau\ge0$ and differentiate only for $\tau>0$.
The exact Hartree remainder is
\begin{align*}
 \mathcal G(Q+z)-\mathcal G(Q)-\mathcal G'(Q)z
 ={}&2\bigl(|x|^{-1}*(Qz)\bigr)z\\
 &+\bigl(|x|^{-1}*z^2\bigr)Q
   +\bigl(|x|^{-1}*z^2\bigr)z.
\end{align*}
The continuous trilinear Hartree estimate gives, for $\|z_j\|_{H^{1/2}}\le1$,
\begin{align*}
 &\|\mathcal G(Q+z_1)-\mathcal G(Q+z_2)
                   -\mathcal G'(Q)(z_1-z_2)\|_{H^{-1/2}}\\
 &\qquad\le C\bigl(\|z_1\|_{H^{1/2}}+\|z_2\|_{H^{1/2}}\bigr)
                   \|z_1-z_2\|_{H^{1/2}}.
\end{align*}
For \(z\in H^{1/2}_{\rm rad}(\mathbb R^3;\mathbb R)\) with
\(\|z\|_{H^{1/2}_{\rm rad}}\leq1\),
\begin{equation}\label{8.10}
 \|\mathcal G(Q+z)-\mathcal G(Q)-\mathcal G'(Q)z\|_{H^{-1/2}_{\rm rad}}
 \leq C\|z\|_{H^{1/2}_{\rm rad}}^2,
\end{equation}
and
\begin{equation}\label{8.11}
 \|\mathscr A_\tau-\mathscr A_0\|_{
 H^{1/2}_{\rm rad}\to H^{-1/2}_{\rm rad}}\leq\sqrt\tau,
 \qquad
 \|(\mathscr A_\tau-\mathscr A_0)Q\|_{H^{-1/2}_{\rm rad}}\leq C\tau.
\end{equation}
For the last bound, $Q\in L^1\cap L^2$ gives
$|\xi|^{-1}\widehat Q\in L^2(\mathbb R^3)$, and
\[
 0\le\sqrt{|\xi|^2+\tau}-|\xi|
 =\frac{\tau}{\sqrt{|\xi|^2+\tau}+|\xi|}
 \le\frac{\tau}{2|\xi|}\quad(\xi\ne0).
\]
No bounded inverse for $\sqrt{-\Delta}$ on all of $L^2$ is used here.
Writing \(w=Q+z\),
\eqref{8.8} is equivalent to
\[
 z=L_+^{-1}\!\left[
 \mathcal G(Q+z)-\mathcal G(Q)-\mathcal G'(Q)z
 -(\mathscr A_\tau-\mathscr A_0)(Q+z)
 \right].
\]
By \eqref{8.6}, \eqref{8.10}, and \eqref{8.11}, the norm of the right-hand side is bounded by
\[
 C\left(\|z\|_{H^{1/2}}^2+\sqrt\tau\|z\|_{H^{1/2}}+\tau\right),
\]
and its Lipschitz constant on $\|z\|_{H^{1/2}}\le C_0\tau$ is at most
$C(2C_0\tau+\sqrt\tau)$.  Choose $C_0>2C$ and then $\tau_0>0$ so that
\[
 C(C_0^2\tau_0+C_0\sqrt{\tau_0}+1)\le C_0,\qquad
 C(2C_0\tau_0+\sqrt{\tau_0})<\frac12.
\]
The closed ball $\|z\|_{H^{1/2}_{\rm rad}}\le C_0\tau$ is invariant and
the map is a contraction for $0<\tau\le\tau_0$.  At $\tau=0$, take
$w_0=Q$.  Thus
\begin{equation}\label{8.12}
 \|w_\tau-Q\|_{H^{1/2}_{\rm rad}}\leq C\tau.
\end{equation}
If \(w_1,w_2\in H^{1/2}_{\rm rad}(\mathbb R^3;\mathbb R)\),
\(\|w_j-Q\|_{H^{1/2}_{\rm rad}}<r_0\), solve
\eqref{8.8}, then
\[
 \left[\mathscr A_\tau-
 \int_0^1\mathcal G'\bigl(w_2+s(w_1-w_2)\bigr)\,\dd s\right](w_1-w_2)=0.
\]
Indeed,
\begin{align*}
 &\left\|L_+^{-1}\left[
 \mathscr A_\tau-\int_0^1\mathcal G'(w_2+s(w_1-w_2))\,\dd s-L_+
 \right]\right\|_{H^{1/2}_{\rm rad}\to H^{1/2}_{\rm rad}}\\
 &\qquad\le C(\sqrt\tau+r_0)<\frac12
\end{align*}
when $r_0,\tau_0$ are small.  A Neumann series proves invertibility,
so $w_1=w_2$ in this fixed neighborhood.  Decreasing $\tau_0$ also ensures
$C_0\tau_0<r_0$.  For $\tau>0$ the operator family is $C^1$, with
\[
 \partial_\tau\mathscr A_\tau=\frac12(-\Delta+\tau)^{-1/2}.
\]
The implicit-function theorem then yields
\[
 w_\tau\in C^1\!\left((0,\tau_0];
 H^{1/2}_{\rm rad}(\mathbb R^3;\mathbb R)\right).
\]
Write $w_\tau':=\partial_\tau w_\tau$. Differentiation gives
\begin{equation}\label{8.13}
 \bigl(\mathscr A_\tau-\mathcal G'(w_\tau)\bigr)w_\tau'
 =-\frac12(-\Delta+\tau)^{-1/2}w_\tau.
\end{equation}
Moreover,
\[
 \|\mathscr A_\tau-\mathcal G'(w_\tau)-L_+\|_{
 H^{1/2}_{\rm rad}\to H^{-1/2}_{\rm rad}}=O(\sqrt\tau),
\]
and, by \eqref{8.12},
\[
 \|(-\Delta+\tau)^{-1/2}(w_\tau-Q)\|_{H^{-1/2}_{\rm rad}}
 \leq \tau^{-1/2}\|w_\tau-Q\|_{H^{1/2}_{\rm rad}}
 =O(\sqrt\tau).
\]
Since \(Q\in L^1(\mathbb R^3)\cap L^2(\mathbb R^3)\),
\[
 \int_{\mathbb R^3}|\xi|^{-2}|\widehat Q(\xi)|^2\,\dd\xi
 \leq
 \|\widehat Q\|_\infty^2
 \int_{|\xi|\leq1}|\xi|^{-2}\,\dd\xi
 +\|Q\|_2^2<\infty.
\]
With the unitary Fourier normalization,
\[
 \widehat{\frac1{2\pi^2}\bigl(|x|^{-2}*Q\bigr)}(\xi)
 =|\xi|^{-1}\widehat Q(\xi).
\]
Therefore
\begin{align*}
 &\left\|(-\Delta+\tau)^{-1/2}Q
 -\frac1{2\pi^2}\bigl(|x|^{-2}*Q\bigr)\right\|_{H^{-1/2}}^2
 \\
 &\quad=
 \int_{\mathbb R^3}(1+|\xi|^2)^{-1/2}
 \left((|\xi|^2+\tau)^{-1/2}-|\xi|^{-1}\right)^2
 |\widehat Q(\xi)|^2\,\dd\xi
 \longrightarrow0.
\end{align*}
It follows from \eqref{8.13} that
\begin{equation}\label{8.14}
 w_\tau'\longrightarrow
 -\frac12L_+^{-1}
 \left[\frac1{2\pi^2}\bigl(|x|^{-2}*Q\bigr)\right]
 \quad\text{in }H^{1/2}_{\rm rad}(\mathbb R^3;\mathbb R).
\end{equation}
By \cref{7.2}, the dilation identity
\eqref{7.3} holds in $L^2$.
Moreover,
\[
 \left\langle \lambda^{3/2}Q(\lambda\,\cdot),
 \frac1{2\pi^2}|x|^{-2}*
 \bigl[\lambda^{3/2}Q(\lambda\,\cdot)\bigr]\right\rangle
 =\lambda^{-1}
 \left\langle Q,\frac1{2\pi^2}\bigl(|x|^{-2}*Q\bigr)\right\rangle,
\]
so
\[
 2\left\langle \frac32Q+x\cdot\nabla Q,
 \frac1{2\pi^2}\bigl(|x|^{-2}*Q\bigr)\right\rangle
 =-\left\langle Q,
 \frac1{2\pi^2}\bigl(|x|^{-2}*Q\bigr)\right\rangle.
\]
Hence, by self-adjointness of \(L_+^{-1}\),
\begin{equation}\label{8.15}
 \left\langle Q,L_+^{-1}
 \left[\frac1{2\pi^2}\bigl(|x|^{-2}*Q\bigr)\right]\right\rangle
 =\frac12\left\langle Q,
 \frac1{2\pi^2}\bigl(|x|^{-2}*Q\bigr)\right\rangle>0.
\end{equation}
Combining \eqref{8.14} and
\eqref{8.15},
\[
 \frac{\dd}{\dd\tau}\|w_\tau\|_2^2
 \longrightarrow
 -\frac12\left\langle Q,
 \frac1{2\pi^2}\bigl(|x|^{-2}*Q\bigr)\right\rangle<0.
\]
\end{proof}

\begin{proof}[Proof of Corollary~\ref{1.4}]
There is a single $0<\delta_*<N_*$ such that, for every positive
radial minimizer $u_N$ with $N_*-\delta_*<N<N_*$,
\[
 0<\tau_N<\tau_0,\qquad \|w_N-Q\|_{H^{1/2}}<r_0.
\]
Otherwise, one could choose $N_j\to N_*$ and corresponding minimizers
such that
\[
 \tau_{N_j}\ge\tau_0
 \quad\text{or}\quad
 \|w_{N_j}-Q\|_{H^{1/2}}\ge r_0.
\]
This contradicts the sequential conclusion \eqref{8.5}.
By \cref{8.7}, every such $w_N$ equals $w_{\tau_N}$.
If two minimizers have mass $N$, then
\[
 \|w_{\tau_1}\|_2^2=N=\|w_{\tau_2}\|_2^2.
\]
Strict monotonicity \eqref{8.9} gives $\tau_1=\tau_2$.
Local uniqueness and the inverse rescaling
\[
 \eta_N=\tau_N^{-1/2},\qquad
 u_N(x)=\eta_N^{3/2}w_{\tau_N}(\eta_Nx)
\]
give equality of the centered minimizers.  Finally, \eqref{8.4} preserves
$N$ and bijects minimizers for $m=1$ and $m>0$; hence the same
$\delta_*$ works for every rest mass.  Translation invariance gives the
stated uniqueness up to translations.
\end{proof}

\noindent \textbf{Acknoledgments}: The research of Juncheng Wei was supported by National R\&D Program of China (Grant
No. 2022YFA1005602), and Hong Kong General Research Fund (No. 14303125) “On Fujita
equation in the critical or supercritical regime”.  Yuanyang Yu was supported by National Natural
Science Foundation of China (No.12301136). We thank Louise Gassot and Nicolas Camps for introducing us to the Benjamin-Ono function.

\bigskip

\noindent \textbf{Declarations of interest}: None.

\bigskip
\noindent \textbf{Data availability statement}: There are no new data associated with this article.

\bigskip
 \noindent \textbf{AI assistance statement}: The authors used AI tools to assist with exploring, coding and manuscript editing; all mathematical validation, final proof decisions, and final wording remain the sole responsibility of the human authors.

\clearpage 
 \phantomsection

\newpage
\noindent {Pan Chen\\
School of Mathematical Sciences,\\
 Shanghai Jiao Tong University, Shanghai 200240, P.R. China
\\
e-mail: chenpan2020@amss.ac.cn }
\medskip
\\
\noindent {Qi Guo\\
School of Mathematics,\\
Renmin University of China, Beijing, 100872, P.R. China\\
e-mail: qguo@ruc.edu.cn}
\medskip
\\
\noindent {Juncheng Wei\\
Department of Mathematics, \\
Chinese University of Hong Kong, Shatin, N.T., Hong Kong\\
e-mail: wei@math.cuhk.edu.hk}
\medskip
\\
\noindent {Yuanyang Yu\\
School of Mathematics and Statistics, \\
Yunnan University, Yunnan, 650500, P.R. China\\
e-mail: yyysx43@163.com }

\end{document}

%% file: boson_f0_figures_latex/fig_f0.tex
\begin{figure}[htbp]
\centering
\begingroup\normalcolor
\begin{tikzpicture}
\begin{axis}[
  width=0.68\textwidth, height=0.35\textwidth,
  scale only axis,
  domain=0:10, samples=401,
  xmin=0, xmax=10, ymin=0, ymax=0.65,
  xtick={0,2,4,6,8,10}, ytick={0,0.2,0.4,0.6},
  xlabel={$r$}, ylabel={$f_0(r)$},
  axis lines=box, tick align=outside,
  axis line style={black, line width=0.5pt},
  tick style={black},
  tick label style={font=\footnotesize, text=black},
  label style={font=\small, text=black},
  scaled ticks=false,
  grid=major, major grid style={gray!20, line width=0.3pt}
]
\addplot[blue!75!black, line width=1pt, no marks]
  {4*(3/2)^(5/2)*x/(sqrt(pi)*(x^2+9/4)^2)};
\end{axis}
\end{tikzpicture}
\caption{The reference function $f_0$.}
\label{fig:reference-profile}
\endgroup
\end{figure}

%% file: boson_f0_figures_latex/fig_f0_potentials.tex
\begin{figure}[htbp]
\centering
\begingroup\normalcolor
\begin{tikzpicture}[
  declare function={
    atanzero(\t)=(\t==0 ? 1 : rad(atan(\t))/max(\t,0.00000001));
    vzero(\t)=8*atanzero(\t)/3
              +8*(3*\t^2+5)/(9*(1+\t^2)^2);
    sigzero(\t)=1/3+8/(3*(1+\t^2));
  }
]
\begin{groupplot}[
  group style={group size=2 by 1, horizontal sep=0.12\textwidth},
  width=0.38\textwidth, height=0.32\textwidth,
  scale only axis,
  domain=0:12, samples=601,
  xmin=0, xmax=12, xtick={0,2,4,6,8,10,12},
  xlabel={$r$},
  axis lines=box, tick align=outside,
  axis line style={black, line width=0.5pt},
  tick style={black},
  tick label style={font=\footnotesize, text=black},
  label style={font=\small, text=black},
  title style={font=\small, text=black},
  scaled ticks=false,
  grid=major, major grid style={gray!20, line width=0.3pt},
  legend style={
    at={(0.96,0.96)}, anchor=north east,
    font=\footnotesize, text=black,
    draw=gray!45, fill=white,
    line width=0.3pt, inner sep=3pt,
    cells={anchor=west}
  },
  every axis plot/.append style={line width=1pt, no marks}
]
\nextgroupplot[
  title={(a) $V_{f_0}$ and $\Sigma$},
  ymin=0, ymax=7.5, ytick={0,2,4,6}
]
\addplot[blue!75!black] {vzero(2*x/3)};
\addlegendentry{$V_{f_0}(r)$}
\addplot[orange!90!black, dashed] {sigzero(2*x/3)};
\addlegendentry{$\Sigma(r)$}

\nextgroupplot[
  title={(b) Defect weight $W$},
  ymin=-0.38, ymax=0.14, ytick={-0.3,-0.2,-0.1,0,0.1}
]
\addplot[black, line width=0.4pt, forget plot]
  coordinates {(0,0) (12,0)};
\addplot[blue!75!black] {vzero(2*x/3)-(7/3)*sigzero(2*x/3)};
\end{groupplot}
\end{tikzpicture}
\caption{Left: $V_{f_0}$ and $\Sigma=Af_0/f_0$.
Right: the defect weight $W=V_{f_0}-\frac73\Sigma$.}
\label{fig:f0-potentials}
\endgroup
\end{figure}